\theoremstyle{definition}
\theoremstyle{definition}
\theoremstyle{definition}
\theoremstyle{definition}
\theoremstyle{definition}
\numberwithin{equation}{section}
\newtheorem{definition}{Definition}[section]
\newtheorem{lemma}{Lemma}[section]
\newtheorem{theorem}{Theorem}[section]
\newtheorem{conjecture}{Conjecture}[section]
\newtheorem{corollary}{Corollary}[section]
\newtheorem{remark}{Remark}[section]
\theoremstyle{remark}
\begin{document}
\title{On Three-dimensional CR Yamabe Solitons}
\author{Huai-Dong Cao$^{1}$}
\address{$^{1}$Department of Mathematics, University of Macau, Macao \& Lehigh University, Bethlehem, PA
18015, USA}
\email{huc2@lehigh.edu }
\author{$^{\ast }$Shu-Cheng Chang$^{2}$}
\address{$^{2}$Department of Mathematics and Taida Institute for
Mathematical Sciences (TIMS), National Taiwan University, Taipei 10617,
Taiwan, R.O.C.}
\email{scchang@math.ntu.edu.tw }
\author{$^{\ast }$Chih-Wei Chen$^{3}$}
\address{$^{3}$Department of Mathematics, National Taiwan University, Taipei
10617, Taiwan, R.O.C. }
\email{BabbageTW@gmail.com}
\thanks{$^{\ast}$Research supported in part by the NSC of Taiwan}
\subjclass[2010]{Primary 32V05; Secondary 53C12}
\keywords{CR Harnack quantity, CR Yamabe soliton, CR
Paneitz operator.}

\begin{abstract}
In this paper, we investigate the geometry and classification of three-dimensional CR Yamabe solitons. 
In the compact case, we show that any $3$-dimensional CR Yamabe soliton must have constant 
Tanaka-Webster scalar curvature; we also obtain a classification under the assumption that their potential functions are in the kernel of the CR Paneitz operator. 
In the complete case, we obtain a structure theorem on the diffeomorphism types of complete $3$-dimensional pseudo-gradient CR Yamabe solitons (shrinking, or steady, or expanding) of vanishing torsion.
\end{abstract}

\maketitle

\section{Introduction}

Inspired by Perelman's work \cite{p1, p2} on Hamilton's Ricci flow (cf. \cite{h1}-\cite{h5}), self-similar solutions, also known as (geometric) solitons, of various geometric flows have attracted a lot of attentions in recent years because of their close ties with singularity formations in the geometric flows.  In particular, important progress has been made in the study of Ricci solitons, 
self-similar solutions to the mean curvature flow, as well as Yamabe solitons, etc. 

For pseudohermitian manifolds, similar to the concept of Yamabe solitons, one can introduce the notion of CR Yamabe solitons
and pseudo-gradient CR Yamabe solitons.

\begin{definition} A pseudohermitian $(2n+1)$-manifold $(M^{2n+1}, J,\theta )$, with CR structure $J$ and compatible contact form $\theta$, is called a CR Yamabe soliton if there exist an 
infinitesimal contact diffeomorphism $X$ and a constant 
$\mu\in {\mathbb R}$  such that  
\begin{equation}\label{CRYamabe}
\left \{ 
\begin{array}{l}
W\theta+\frac{1}{2}L_X\theta =\mu\theta,\\
L_{X}J=0,
\end{array}
\right. 
\end{equation}
where 
$W$ is the Tanaka-Webster scalar curvature of $(M^{2n+1}, J,\theta)$ and $L_{X}$ denotes Lie derivative by $X$. It is called {\it shrinking} if $\mu>0$, {\it steady}  if $\mu=0$, and {\it expanding}  if $\mu<0$. 
\end{definition}

\begin{remark}  An infinitesimal contact diffeomorphism is a vector field $X$ on $M$ such that  $L_{X}\theta=\lambda\theta $ for some function $\lambda$. It is known that for any infinitesimal contact diffeomorphism $X$, there always associates a function $f$ such that $X=J(\nabla_b f)+f\bf{T}$ and $L_{X}\theta=f_0\theta$,
where $\nabla_b$ is the subgradient, and the subscript $0$ denotes the differentiation along the Reeb vector field $\bf{T}$. 
Note that the first equation in (1.1) can then be expressed as $W+\frac{1}{2}f_0=\mu$, while the second equation $L_{X}J=0$ is corresponding to $f_{\alpha\alpha}+iA_{\alpha\alpha} f=0$ with torsion tensor $A=\{A_{\alpha\alpha}\}$. We refer the reader to Section 3 for more details.
\end{remark}

As we shall see, CR Yamabe solitons also correspond to self-similar solutions to the CR Yamabe flow on a pseudohermitian $(2n+1)$-manifold $(M^{2n+1}, J,\mathring{\theta})$ given by (cf. \cite{cc})
\begin{equation}
\left \{ 
\begin{array}{l}
\frac{\partial }{\partial t}\theta(t) =-2W(t)\theta \left( t\right) , \\ 
\theta \left( 0\right) =\mathring{\theta},
\end{array}
\right.  \label{1b}
\end{equation}%
where $W(t)$ is the Tanaka-Webster 
scalar curvature with respect to the evolving contact form $\theta \left( t\right)$.
Like the Riemannian Yamabe flow, which has been very well understood in the compact case, 
the compact CR Yamabe flow hopefully provides a canonical deformation from the given contact form 
$\mathring{\theta}$ to a contact form $\theta$ of constant Tanaka-Webster scalar curvature.
As noted in \cite{cc}, the CR Yamabe flow has a unique short time solution on any compact 
pseudohermitian $(2n+1)$-manifold $(M^{2n+1}, J,\mathring{\theta})$. 
Moreover, on a compact pseudohermitian $3$-manifold $(M^3 J, \mathring{\theta})$, 
solutions to the normalized CR Yamabe flow 
\begin{equation}
\frac{\partial }{\partial t}\theta (t)=2(r(t)-W(t))\theta (t)\ ;\  \
r(t)=\int_{M}W(t)d\mu /\int_{M}d\mu   \label{2011a}
\end{equation}
exist for all time. This long time existence of the three-dimensional normalized CR 
Yamabe flow was established in \cite{h6} and \cite{ss}.
However, unlike the Riemannian Yamabe flow, the problem of asymptotic convergence of solutions
of the CR Yamabe flow is widely open even on closed pseudohermitian $3$-manifolds.
Here we propose the following
\begin{conjecture}
\label{conj} 
Let $(M^3, J,\mathring{\theta})$ be a closed spherical CR $3$-manifold with positive Tanaka-Webster scalar curvature 
and vanishing torsion. Then, as $t\rightarrow \infty$, the solution to the normalized CR Yamabe flow converges smoothly to a unique limit 
contact form of positive constant Tanaka-Webster scalar curvature and vanishing 
pseudohermitian torsion.
\end{conjecture}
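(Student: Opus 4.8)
\medskip
\noindent\textit{A possible approach to Conjecture~\ref{conj}.}
The plan is to transplant, into the subelliptic category, the convergence theory developed for the Riemannian Yamabe flow by Ye, Schwetlick--Struwe and Brendle. Writing the evolving contact form as $\theta(t)=e^{2\phi(t)}\mathring\theta$, the normalized flow \eqref{2011a} becomes a single degenerate parabolic equation for the conformal factor $\phi$ with forcing term $r(t)-W(t)$; its long-time existence is already supplied by \cite{h6} and \cite{ss}, so the entire content is the behaviour as $t\to\infty$. The natural Lyapunov functional is the CR Yamabe energy $\mathcal{Y}(\theta)=\big(\int_M W\,d\mu\big)\big/\big(\int_M d\mu\big)^{1/2}$, which is non-increasing along \eqref{2011a} with $\tfrac{d}{dt}\mathcal{Y}(\theta(t))=-c_0\int_M (W-r)^2\,d\mu$ for a constant $c_0>0$, and which is bounded below by the (positive) CR Yamabe invariant of $(M,J)$. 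Hence $\int_0^\infty\!\int_M (W-r)^2\,d\mu\,dt<\infty$, so $\int_M (W-r)^2\,d\mu\to0$ along a sequence of times $t_k\to\infty$ (and, once the estimates below are in place, along all times).

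The decisive step is a priori estimates. First one shows that $W(t)$ stays strictly positive and uniformly bounded above. The evolution equation of $W$ under \eqref{2011a} has the schematic form $\partial_t W=2(r-W)\,W+\mathcal{L}_tW$, where $\mathcal{L}_t$ is a second-order operator assembled from the sublaplacian, the torsion, and the CR Paneitz operator $P_0$. Here the spherical hypothesis is essential: on a spherical pseudohermitian $3$-manifold one has $P_0\ge0$ (Chanillo--Chiu--Yang), and this sign is exactly what lets a maximum-principle argument go through and yield $0<c\le W(t)\le C$ uniformly in $t$. One then bootstraps: sphericity expresses the full pseudohermitian curvature in terms of $W$, the torsion $A_{11}$, and their horizontal derivatives, so subelliptic $L^p$ and Schauder estimates for \eqref{2011a} promote the $C^0$ bound on $W$ to uniform $C^\infty$ control of $\theta(t)$ relative to a fixed background --- \emph{provided} the torsion is controlled as well. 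This is the real difficulty: contrary to the Riemannian conformal case, vanishing torsion is \emph{not} preserved by \eqref{2011a} (a conformal change $\hat\theta=e^{2\phi}\theta$ shifts $A_{11}$ by a term involving the second horizontal derivative $\phi_{11}$ together with a quadratic in the first derivatives of $\phi$), so one has to derive the evolution equation of $A_{11}$, show $A_{11}$ stays uniformly bounded, and --- exploiting a monotone quantity together with the hypothesis $A_{11}(0)=0$ --- that $\int_M|A_{11}(t)|^2\,d\mu\to0$ as $t\to\infty$. I expect this control of the torsion along the degenerate, Paneitz-coupled flow to be the main obstacle; the remaining steps are adaptations of existing Riemannian and CR arguments.

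Granting the uniform estimates, extract a subsequential $C^\infty$ limit $\theta(t_k)\to\theta_\infty$; by the energy estimate $W(\theta_\infty)$ is a positive constant, and by the torsion decay the pseudohermitian torsion of $\theta_\infty$ vanishes. Thus $(M,J,\theta_\infty)$ is a closed spherical pseudohermitian $3$-manifold with vanishing torsion and positive constant Webster scalar curvature; by the associated CR rigidity (such a structure is a spherical space form $S^3/\Gamma$, $\Gamma$ finite, carrying up to scaling and CR automorphism its standard contact form), $\theta_\infty$ is the contact form announced in the conjecture.

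Finally, upgrade subsequential convergence to convergence as $t\to\infty$ with a unique limit. Here one uses a \L ojasiewicz--Simon inequality for $\mathcal{Y}$ in a $C^\infty$ neighbourhood of $\theta_\infty$ --- sphericity gives the critical set enough structure for such an inequality to hold --- which together with $\int_M(W-r)^2\,d\mu\to0$ forces exponential-in-time convergence to a single limit contact form. In the borderline case $\Gamma=\{e\}$, where the CR automorphism group of $S^3$ is noncompact and the critical set fails to be compact, one first fixes the position of the limit by a conformal center-of-mass (barycenter) normalization in the spirit of Brendle's analysis of the spherical Yamabe flow.
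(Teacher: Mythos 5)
This statement is posed in the paper as an open conjecture, not a theorem: the authors explicitly say that the asymptotic convergence of the CR Yamabe flow ``is widely open even on closed pseudohermitian $3$-manifolds,'' and they offer no proof, only the supporting evidence of the stability result in \cite{ccw} and \cite{ho}. Your text is likewise a research program rather than a proof, and every analytically hard step in it is asserted rather than established: the uniform two-sided bound $0<c\le W(t)\le C$, the uniform higher-order control of the conformal factor, the claimed monotone quantity forcing $\int_M|A_{11}|^2\,d\mu\to 0$, and the \L ojasiewicz--Simon inequality for the CR Yamabe energy in the subelliptic setting are all left as ``provided'' or ``granting.'' In the Riemannian model you invoke, the analogous compactness step is exactly where the positive mass theorem (Brendle) or the locally-conformally-flat-plus-curvature hypotheses (Ye, Schwetlick--Struwe) enter to rule out concentration of the conformal factor; your sketch never addresses what replaces that input in the CR category (a CR positive mass statement itself requires extra hypotheses in dimension three), so the central difficulty of the conjecture is not engaged.

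One concrete misstep: you identify control of the torsion as ``the real difficulty,'' claiming vanishing torsion is not preserved by the flow. Under the hypotheses of the conjecture this is already settled by the paper's Lemma 2.3: since sphericity is a CR invariant, the relation $Q_{11}=0$ lets one rewrite the torsion evolution $\partial_t A_{11}=2WA_{11}-2iW_{11}$ as the linear homogeneous equation $\partial_t A_{11}=-4\mathcal{L}_4A_{11}-12WA_{11}$, so $A_{11}(0)=0$ forces $A_{11}(t)\equiv 0$ for all $t$ (and the same holds for the normalized flow, which differs only by a time-dependent rescaling). Thus no decay estimate for the torsion is needed; the genuine open issue is the a priori control of $W$ and of the conformal factor, i.e.\ the exclusion of bubbling, which your proposal does not resolve.
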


A related result was derived in a paper of the second named author with others \cite{ccw}, 
where they proved the stability property for solutions of the normalized CR Yamabe flow 
near the standard CR $3$-sphere $(\mathbf{S}^{3},\widehat{J},\widehat{\theta })$ which is 
spherical with positive constant Tanaka-Webster curvature and vanishing pseudohermitian torsion. 
We also refer to \cite{ho} for another result related to this conjecture.
 
We remark that one of the difficulties is that CR Yamabe flow does not improve the behavior 
of the pseudohermitian torsion. So another flow, called the CR torsion flow, was 
considered by the second named author and his coauthors (\cite{ckw}) on a pseudohermitian $3$-manifold: 
the {\it CR torsion flow} is defined by
\begin{equation}
\left \{ 
\begin{array}{l}
\frac{\partial }{\partial t}J(t)=-2JA_{J,\theta }(t),\\ 
\frac{\partial }{\partial t}\theta (t)=-2W(t)\theta (t),
\end{array}
\right.   \label{2011aaa}
\end{equation}
on $M\times \lbrack 0,T)$ with the CR structure $J(t)=i\theta^{1}\otimes
Z_{1}-i\theta ^{\bar{1}}\otimes Z_{\bar{1}}$ and the
pseudohermitian torsion tensor $A_{J,\theta }(t)=-iA_{11}\theta ^{1}\otimes
Z_{\bar{1}}+iA_{\bar{1}\bar{1}}\theta ^{\bar{1}}\otimes
Z_{1}.$ 
This flow is a CR analogue of the Ricci flow, and the CR Yamabe flow (\ref{1b}) 
is a special formulation of it. 

On the other hand, in view of the work of \cite{ds}, \cite{csz}, and \cite{cmm}, understanding the structure of CR Yamabe solitons may be a necessary step in approaching the asymptotic convergence of solutions 
of the CR Yamabe flow (\ref{1b}). Indeed, one expects CR Yamabe solitons to model 
singularity formations of the CR Yamabe flow.

In this paper, we investigate the geometry and classification of three-dimensional closed CR Yamabe solitons 
 $\left( M^{3}, J, \theta, f, \mu\right)$ 
satisfying the following equivalent soliton equations
\begin{equation*}
\left \{ 
\begin{array}{l}
W+\frac{1}{2}f_{0}=\mu , \\ 
f_{11}+iA_{11}f=0,%
\end{array}%
\right. 
\end{equation*}%
for some smooth function $f$ on a pseudohermitian $3$-manifold $(M^3,  J, \theta)$. 


Our first result asserts that any closed three-dimensional CR Yamabe soliton must 
have constant Tanaka-Webster scalar curvature. 

\begin{theorem}
\label{A} If $\left( M^{3}, J, \theta , f, \mu \right) $ is a closed three-dimensional CR
Yamabe soliton, then $\theta $ is the contact structure of constant
Tanaka-Webster scalar curvature $\mu$, i.e., $f_{0}=0$.
\end{theorem}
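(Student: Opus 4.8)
The plan is to exploit the soliton equations together with integration by parts and the CR Bochner-type machinery in dimension $3$. Write the soliton system as $W + \tfrac12 f_0 = \mu$ and $f_{11} + iA_{11}f = 0$. First I would differentiate the first equation and use the commutation relations for covariant derivatives on a pseudohermitian $3$-manifold to convert statements about $f_0$ into statements about the second-order horizontal derivatives $f_{1\bar1}$, $f_{11}$ and the torsion $A_{11}$. The key classical input here is the contracted second Bianchi-type identity in CR geometry, namely $W_{,1} = i A_{11,\bar1} - \cdots$ (the CR analogue relating the divergence of the torsion to the derivative of scalar curvature), which lets me rewrite $W_{,1}$ purely in terms of $f$ and $A$. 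Combining this with $f_{0} = 2(\mu - W)$ gives a relation of the form $f_{01} = -2W_{,1}$, and then commuting $f_{01}$ versus $f_{10}$ introduces torsion terms that, via the second soliton equation $f_{11} = -iA_{11}f$, become controllable expressions in $f$, $\nabla_b f$, and $A$.

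Second, I would bring in the CR Paneitz operator $P$. On a pseudohermitian $3$-manifold the Paneitz operator has the form $P f = 4(f_{\bar1\bar1 1} + iA_{11}f_{\bar1} )_{,1} + \text{(conjugate/lower order)}$, and it is formally self-adjoint and nonnegative when the torsion vanishes — but more importantly there is a divergence identity expressing $\int_M f\, Pf\, d\mu$ as $\int_M |f_{\bar1\bar1} + iA f|^2$-type quantities. The second soliton equation says precisely that the conjugate quantity $f_{\bar1\bar1} + i\overline{A}f$ (up to notational conventions $\overline{f_{11}+iA_{11}f}$) vanishes, so $f$ should land in the kernel of a natural first-order operator whose square controls $P$. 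I expect the upshot of this step to be an integral identity of the shape
\begin{equation*}
\int_M |f_0|^2\, d\mu \;=\; -\,c\int_M (\text{a manifestly nonpositive expression in } f, \nabla_b f, A)\, d\mu \;\le\; 0
\end{equation*}
for some positive constant $c$, forcing $f_0 \equiv 0$, whence $W \equiv \mu$ by the first soliton equation.

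Third, to actually produce that identity I would multiply the first soliton equation by a suitable multiplier — natural candidates are $f_0$ itself, or $W$, or $\Delta_b f$ — integrate over the closed manifold $M$, and integrate by parts. Using $\int_M f_0\, d\mu = 0$ (since $f_0$ is a Reeb derivative and the contact volume is Reeb-invariant) kills the boundary/constant terms. The torsion contributions that arise upon commuting derivatives get absorbed using the second soliton equation, which is exactly the mechanism that makes the torsion drop out of the final inequality even though no torsion hypothesis is imposed in Theorem \ref{A}. The main obstacle I anticipate is bookkeeping the commutator and Bianchi terms correctly so that everything collapses to a single-signed integrand: in dimension $3$ there is essentially one unknown component $A_{11}$ and one complex horizontal direction, so the algebra is finite, but the signs and the precise coefficients (and possibly an appeal to the fact that $\int_M f\, Pf\, d\mu \ge 0$ fails in general, so one must instead use a torsion-corrected version) are delicate. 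I would therefore first nail down, in a preliminary lemma, the exact divergence identity for $\int_M W f_0\, d\mu$ in terms of $|f_{11}+iA_{11}f|^2$ and $|f_0|^2$; once that is in hand, Theorem \ref{A} follows immediately by substituting the second soliton equation to annihilate the $|f_{11}+iA_{11}f|^2$ term, leaving $\int_M |f_0|^2 \le 0$.
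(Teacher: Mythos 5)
Your overall strategy is the right one and is essentially the paper's: differentiate the first soliton equation, trade $f_{0}$-derivatives for horizontal second derivatives via the commutation relations (\ref{2010a}), use the second soliton equation $f_{11}+iA_{11}f=0$ together with the CR Bianchi identity $A_{11,\bar{1}\bar{1}}+A_{\bar{1}\bar{1},11}=W_{0}$ to eliminate the torsion, integrate over the closed manifold, and finish with $\int_{M}f_{0}\,d\mu=0$. The paper packages the pointwise outcome of this computation as the Harnack identity $4\Delta_{b}W+2W(W-\mu)-W_{0}f-\langle\nabla_{b}W,J(\nabla_{b}f)\rangle_{\theta}=0$ (Lemma 3.2); integrating it collapses everything to $\int_{M}W(W-\mu)\,d\mu=0$, which combined with $\int_{M}(W-\mu)\,d\mu=-\tfrac{1}{2}\int_{M}f_{0}\,d\mu=0$ yields $\int_{M}(W-\mu)^{2}\,d\mu=0$, i.e.\ $f_{0}\equiv0$.

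That said, as written your argument has a genuine gap: the entire content of the theorem sits in the ``preliminary lemma'' you defer to, and two of your anticipated ingredients are off target. First, the CR Paneitz operator plays no role here; $P_{1}f$ is a third-order quantity built from $f_{\bar{1}\bar{1}1}$, whereas the soliton condition kills the second-order quantity $f_{11}+iA_{11}f$, and in the paper $P_{0}$ enters only in Theorem 1.2. Second, the final identity is not of the form $\int_{M}|f_{0}|^{2}\,d\mu=-c\int(\text{manifestly nonpositive})$ requiring the delicate sign analysis you worry about: the torsion contributions cancel \emph{exactly}. Concretely, integrating by parts (twice in the horizontal directions, once along $\mathbf{T}$, which preserves $d\mu=\theta\wedge d\theta$) and using the Bianchi identity above, one gets for any real $f$
\begin{equation*}
\int_{M}Wf_{0}\,d\mu=-\int_{M}W_{0}f\,d\mu=-\int_{M}\bigl(A_{11}f_{\bar{1}\bar{1}}+A_{\bar{1}\bar{1}}f_{11}\bigr)\,d\mu=-2\,\mathrm{Re}\int_{M}A_{\bar{1}\bar{1}}\bigl(f_{11}+iA_{11}f\bigr)\,d\mu,
\end{equation*}
a term \emph{linear} in $f_{11}+iA_{11}f$ (paired against $A_{\bar{1}\bar{1}}$), not a square; it vanishes identically by the second soliton equation, whence $\int_{M}Wf_{0}\,d\mu=0$ and then $f_{0}=2(\mu-W)$ together with $\int_{M}f_{0}\,d\mu=0$ give $\int_{M}f_{0}^{2}\,d\mu=-2\int_{M}Wf_{0}\,d\mu=0$. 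Until that commutator/Bianchi computation is actually carried out (it is precisely Lemma 3.2 of the paper, or the shortcut just displayed), the proposal is a correct roadmap rather than a proof.
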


Our second result establishes the vanishing of the pseudohermitian torsion of a closed 
three-dimensional CR Yamabe soliton under certain conditions. 

\begin{theorem}
\label{B} Let $\left( M^{3},J,\theta , f, \mu \right) $ be a closed CR
Yamabe soliton with $P_{0}f=0$ for the CR Paneitz operator $P_{0}$. If the contact class $[\theta ]$ of $\theta$ 
admits a contact form $\widetilde{\theta }$ of the vanishing CR $\widetilde{Q}$%
-curvature, then $(M,J,\theta )$ is a closed pseudohermitian $3$-manifold of
vanishing pseudohermitian torsion provided that $f$ is nowhere vanishing. 
\end{theorem}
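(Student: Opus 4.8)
The plan is to combine Theorem~\ref{A} with a geometric reformulation of the soliton condition and then to exploit the two extra hypotheses through the CR Paneitz operator and the $Q$-curvature transformation law.

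First, by Theorem~\ref{A} we have $W\equiv\mu$ and $f_{0}=0$, so the soliton equation reduces to $f_{11}=-iA_{11}f$; hence $A_{11}=-if^{-1}f_{11}$ and, $M$ being connected with $f$ nowhere zero (so that we may take $f>0$), the torsion of $\theta$ vanishes if and only if $f_{11}\equiv 0$ (derivatives taken with respect to $\theta$). The key structural observation is that the soliton field $X=J(\nabla_{b}f)+f\mathbf{T}$ is exactly the Reeb field of $\theta_{\ast}:=f^{-1}\theta\in[\theta]$: one has $\theta_{\ast}(X)=1$, and since $Xf=\langle\nabla_{b}f,J\nabla_{b}f\rangle+f f_{0}=0$ we get $L_{X}\theta_{\ast}=-f^{-2}(Xf)\theta=0$, whence $\iota_{X}d\theta_{\ast}=L_{X}\theta_{\ast}-d(\iota_{X}\theta_{\ast})=0$. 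The remaining soliton equation $L_{X}J=0$ then says that the Reeb field of $\theta_{\ast}$ preserves $J$, i.e. $(M,J,\theta_{\ast})$ is Sasakian (has vanishing pseudohermitian torsion). Since $\theta=f\theta_{\ast}$, the conformal transformation law for the $(2,0)$-Hessian of a function gives $f_{11}^{(\theta)}=f\,(\log f)_{11}^{(\theta_{\ast})}$, so it suffices to prove $(\log f)_{11}^{(\theta_{\ast})}\equiv 0$.

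Next I would reinterpret the two extra hypotheses on the Sasakian structure $\theta_{\ast}$. By the conformal covariance of the $3$-dimensional CR Paneitz operator, $P_{0,e^{2w}\vartheta}=e^{-4w}P_{0,\vartheta}$, the condition $P_{0}f=0$ is equivalent to $P_{0,\theta_{\ast}}f=0$; since a Sasakian $3$-manifold is embeddable, $P_{0,\theta_{\ast}}$ is nonnegative with kernel exactly the space of CR-pluriharmonic functions, so $f=\operatorname{Re}F$ for a global CR function $F$. On the other hand, because $A_{\theta_{\ast}}=0$, the CR $Q$-curvature of $\theta_{\ast}$ reduces to a divergence, $Q_{\theta_{\ast}}=c_{1}\Delta_{b,\ast}W_{\ast}$; writing $\widetilde{\theta}=e^{2v}\theta_{\ast}$, the transformation law $e^{4v}\widetilde{Q}=Q_{\theta_{\ast}}+c\,P_{0,\theta_{\ast}}v$ together with $\widetilde{Q}=0$ shows that $Q_{\theta_{\ast}}=-c\,P_{0,\theta_{\ast}}v$ lies in the range of $P_{0,\theta_{\ast}}$, hence is $L^{2}(d\mu_{\ast})$-orthogonal to $\ker P_{0,\theta_{\ast}}$. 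In particular $\int_{M}Q_{\theta_{\ast}}\,f\,d\mu_{\ast}=0$, i.e. $\int_{M}W_{\ast}\,\Delta_{b,\ast}f\,d\mu_{\ast}=0$, and more generally $Q_{\theta_{\ast}}$ is orthogonal to every CR-pluriharmonic function.

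The last step is to feed the CR-pluriharmonicity of $f$ into a CR Bochner--Reilly-type identity on the closed Sasakian $3$-manifold $(M,J,\theta_{\ast})$. Such an identity should express a manifestly nonnegative quantity --- after using $f_{0}^{(\ast)}=0$ and $A_{\theta_{\ast}}=0$, something of the shape $\int_{M}\bigl|(\log f)_{11}^{(\theta_{\ast})}\bigr|^{2}f^{2}\,d\mu_{\ast}$ --- as a multiple of $\int_{M}Q_{\theta_{\ast}}f^{2}\,d\mu_{\ast}$ plus terms that vanish because $f\in\ker P_{0,\theta_{\ast}}$; the curvature term is then annihilated using that $Q_{\theta_{\ast}}$ lies in the range of $P_{0,\theta_{\ast}}$, yielding $(\log f)_{11}^{(\theta_{\ast})}\equiv 0$ and, by the first paragraph, $A_{\theta}\equiv 0$. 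I expect \emph{this last step to be the main obstacle}, since the naive manipulations are circular. Indeed, working directly with respect to $\theta$ (where $W\equiv\mu$, so $Q_{\theta}=c_{2}\operatorname{Im}(A_{11,}{}^{11})$), $P_{0}f=0$ and the soliton equation give the pointwise identity $fQ_{\theta}=-c_{2}\operatorname{Im}\bigl(A_{11,}{}^{1}f^{1}\bigr)$, and integrating it against $1$ or against $f$ only reproduces $\int_{M}f^{2}Q_{\theta}\,d\mu=0$; integrating against $f$ and then integrating by parts (using $f_{0}=0$, the soliton relation and the commutation identities) leads to $\int_{M}|f_{11}|^{2}\,d\mu=\tfrac{1}{c_{2}}\int_{M}f^{2}Q_{\theta}\,d\mu-\operatorname{Im}\int_{M}A_{11}(f^{1})^{2}\,d\mu$, and the whole argument comes down to showing the right-hand side is $\le 0$ --- which must use the hypothesis ``$Q_{\theta}$ in the range of $P_{0}$'' in an essential, non-circular way (presumably by pairing against the potential $v$ and expanding $P_{0}(f^{2})$ subject to $P_{0}f=0$), while checking that the divergence terms produced by the integrations by parts integrate to zero on the closed manifold and that the cross-term $\operatorname{Im}\int_{M}A_{11}(f^{1})^{2}\,d\mu$ combines with, rather than against, $\int_{M}|f_{11}|^{2}\,d\mu$. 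A secondary point is justifying the embeddability of $\theta_{\ast}$ and the identifications $\ker P_{0,\theta_{\ast}}=\{\text{CR-pluriharmonic functions}\}$ and $(\ker P_{0,\theta_{\ast}})^{\perp}=\operatorname{range}P_{0,\theta_{\ast}}$.
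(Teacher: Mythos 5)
Your proposal has a genuine gap: it never completes the proof. After invoking Theorem~\ref{A} you correctly reduce the problem to showing $f_{11}\equiv 0$ (equivalently $A_{11}\equiv 0$), and you correctly sense where the two hypotheses $P_{0}f=0$ and $\widetilde{Q}=0$ must enter, but the decisive step --- the ``CR Bochner--Reilly-type identity'' that is supposed to give the quantity $\int_{M}f^{2}Q_{\theta}\,d\mu-\mathrm{Im}\int_{M}A_{11}(f^{1})^{2}\,d\mu$ a definite sign --- is only conjectured, and you yourself flag it as ``the main obstacle''. As written, nothing forces your final right-hand side to be nonpositive, so the argument stops short of the conclusion. (Your observation that $\theta_{\ast}=f^{-1}\theta$ is a torsion-free contact form with Reeb field $X$ is a nice reformulation, but it is not used to finish.)

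The missing idea is both simpler and structurally different: pair the soliton equation against the torsion tensor itself, and weight by $f$ rather than by $f^{2}$. Integrating $A_{\bar{1}\bar{1}}(f_{11}+iA_{11}f)=0$ and its conjugate over the closed manifold and integrating by parts twice yields $\int_{M}(\mathrm{Im}\,A_{11,\bar{1}\bar{1}})f\,d\mu=\int_{M}|A_{11}|^{2}f\,d\mu$. By Theorem~\ref{A} the curvature $W$ is constant, so $\Delta_{b}W=0$ and Corollary~\ref{c42} (the transformation law of $R_{1,\bar{1}}$ applied to $\widetilde{\theta}=e^{2g}\theta$ with $\widetilde{Q}=0$) gives the pointwise identity $\mathrm{Im}\,A_{11,\bar{1}\bar{1}}=6C_{\theta}g=3P_{0}g$. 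Hence the left-hand side equals $3\int_{M}(P_{0}g)f\,d\mu=3\int_{M}g\,(P_{0}f)\,d\mu=0$ by self-adjointness of $P_{0}$, so $\int_{M}|A_{11}|^{2}f\,d\mu=0$; since $f$ is nowhere vanishing, hence of one sign on the connected manifold $M$, this forces $A_{11}\equiv 0$. Note that the hypothesis $P_{0}f=0$ is exploited by pairing against $f$ itself --- $f^{2}$ need not lie in $\ker P_{0}$, which is one reason your $f^{2}$-weighted identity leads nowhere --- and that no embeddability, nonnegativity of $P_{0}$, or identification of $\ker P_{0}$ with the CR-pluriharmonic functions is required.
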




\begin{remark}

(i). The kernel space of CR Paneitz operator $P_{0}$ is
infinite-dimensional.

(ii). Since a pseudohermitian $3$-manifold $(M,J,\theta)$ of constant Tanaka-Webster scalar curvature 
and vanishing pseudohermitian torsion must be spherical (see Definition 2.1 and Eq. (\ref{spherical})), 
it follows from Theorem 1.1, Theorem 1.2 and a result of Y. Kamishima and T. Tsuboi (\cite{kt})  
that one can have a complete classification of such closed spherical torsion-free CR Yamabe 
solitons.

(iii). It is conjectured that any closed three-dimensional pseudohermitian manifold admits 
a contact form of vanishing CR $Q$-curvature (cf. \cite{fh, ccc, cs}). 
Therefore, in view of Theorem \ref{B}, we have
\begin{conjecture}\label{conj2}
Any $3$-dimensional closed CR Yamabe soliton $\left( M^{3},J,\theta ,f,\mu \right)$ 
with $P_{0}f=0$ and nowhere vanishing $f$ must have zero pseudohermitian torsion. 
\end{conjecture}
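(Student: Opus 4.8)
The plan is to play the two soliton equations off against the conformal transformation behaviour of the CR $Q$-curvature and the CR Paneitz operator. The crucial preliminary input is Theorem~\ref{A}: since $f_{0}=0$, the Tanaka--Webster scalar curvature is the constant $W\equiv\mu$. This is exactly what makes everything collapse, because in the $3$-dimensional formula for the CR $Q$-curvature,
\[
Q_\theta \;=\; a\,\Delta_b W \;+\; b\,\operatorname{Im}\!\bigl(A_{11,}{}^{11}\bigr),\qquad b\neq 0
\]
(here $\Delta_b$ is the sub-Laplacian and $a,b$ are universal constants; see Section~2), the only non-torsion part is a divergence, and, after pairing with $f$ and integrating by parts, only a torsion term survives.

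\emph{Step 1 (an integral identity).} Multiply $Q_\theta$ by $f$ and integrate over the closed manifold $M$. The $\Delta_b W$-contribution vanishes, since $\int_M f\,\Delta_b W\,d\mu=\int_M W\,\Delta_b f\,d\mu=\mu\int_M \Delta_b f\,d\mu=0$ (using $W\equiv\mu$ and self-adjointness of $\Delta_b$). For the torsion term, integrate by parts twice so that the two derivatives fall on the scalar $f$; no curvature commutators intervene, because the index raising uses the parallel Levi metric and $f$ is a scalar, and any contracted Bianchi term that might appear is proportional to $\nabla W\equiv0$. Since $f$ is real, $f_{\bar 1\bar 1}=\overline{f_{11}}$, so this yields (the Levi-metric contraction being understood)
\[
\int_M f\,Q_\theta\,d\mu \;=\; b\,\operatorname{Im}\!\int_M \overline{f_{11}}\;A_{11}\,d\mu .
\]
Now invoke the second soliton equation $f_{11}+iA_{11}f=0$, equivalently $\overline{f_{11}}=i\,f\,\overline{A_{11}}$ (again using $f$ real); then $\overline{f_{11}}\,A_{11}=i\,f\,|A_{11}|^{2}$, and hence
\[
\int_M f\,Q_\theta\,d\mu \;=\; b\int_M f\,|A_{11}|^{2}\,d\mu .
\]

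\emph{Step 2 ($\int_M f\,Q_\theta\,d\mu=0$).} By the conformal transformation law of the CR $Q$-curvature, the existence in $[\theta]$ of a contact form $\widetilde\theta=e^{2u}\theta$ with $\widetilde Q\equiv0$ says precisely that $Q_\theta=-c\,P_0u$ for a nonzero constant $c$; that is, $Q_\theta$ lies in the image of the CR Paneitz operator $P_0=P_0^{\theta}$. Because $P_0$ is formally self-adjoint with respect to $d\mu$ and $P_0f=0$,
\[
\int_M f\,Q_\theta\,d\mu \;=\; -c\int_M f\,(P_0u)\,d\mu \;=\; -c\int_M (P_0f)\,u\,d\mu \;=\; 0 .
\]
Comparing with Step~1 gives $\int_M f\,|A_{11}|^{2}\,d\mu=0$. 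Since $f$ is nowhere vanishing and $M$ is connected, $f$ has a fixed sign, so the integrand $f\,|A_{11}|^{2}$ has a fixed sign and therefore vanishes identically; as $f\neq0$ everywhere, $A_{11}\equiv0$, i.e.\ $(M,J,\theta)$ has vanishing pseudohermitian torsion (and, with $W\equiv\mu$, is then spherical by Remark~1.3(ii)).

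The step I expect to be the main obstacle is Step~1: one must have the precise $3$-dimensional expression for $Q_\theta$ on hand, verify that its only non-torsion part is a divergence (so that it is killed by the constancy of $W$ from Theorem~\ref{A}), and carry out the double integration by parts with care — keeping track of the Levi-metric contractions and of any commutation/contracted-Bianchi terms, and checking that the resulting pointwise identity is genuinely $\int_M f\,Q_\theta\,d\mu=b\int_M f\,|A_{11}|^{2}\,d\mu$ with $b\neq0$. Once that identity is secured, the remaining ingredients — the vanishing of $\int_M f\,Q_\theta\,d\mu$ via $P_0f=0$ together with self-adjointness of $P_0$, and the sign argument using that $f$ is nowhere vanishing on the connected $M$ — are routine.
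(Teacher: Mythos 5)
The statement you are proving is Conjecture~\ref{conj2}, which the paper itself leaves open; what you have actually written out is, almost verbatim, the paper's proof of Theorem~\ref{B}. The computational core is fine and coincides with the paper's: with $W\equiv\mu$ from Theorem~\ref{A}, pairing $Q_\theta=-\frac{c}{2}(\Delta_b W+2\operatorname{Im}A_{11,\bar 1\bar 1})$ with $f$, integrating by parts twice, and substituting $f_{\bar 1\bar 1}=\overline{f_{11}}=i f A_{\bar 1\bar 1}$ gives $\int_M f\,Q_\theta\,d\mu = -c\int_M f\,|A_{11}|^2\,d\mu$, which is exactly the paper's identity (\ref{2014}); and the self-adjointness of $P_0$ together with $P_0f=0$ kills the left-hand side, after which the nowhere-vanishing of $f$ forces $A_{11}\equiv 0$.

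The genuine gap is in your Step~2: you invoke ``the existence in $[\theta]$ of a contact form $\widetilde\theta=e^{2u}\theta$ with $\widetilde Q\equiv 0$'' in order to write $Q_\theta$ as $-c\,P_0u$, i.e.\ to place $Q_\theta$ in the image of the Paneitz operator. That hypothesis is \emph{not} part of Conjecture~\ref{conj2}; it is precisely the extra assumption that distinguishes Theorem~\ref{B} from the conjecture. Whether every closed pseudohermitian $3$-manifold admits a $Q$-flat contact form in its contact class is itself an open conjecture (Remark 1.3(iii), cf.\ \cite{fh,ccc,cs}), currently known only in special cases such as boundaries of strictly pseudoconvex domains in Stein manifolds (\cite{caoc}, whence Corollary~\ref{C1}). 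Without it, there is no reason for $\int_M f\,Q_\theta\,d\mu$ to vanish, and the argument does not close. So your write-up is a correct proof of Theorem~\ref{B} but not of the conjecture; to claim the conjecture you would need an independent argument for $\int_M f\,\operatorname{Im}A_{11,\bar 1\bar 1}\,d\mu=0$ (equivalently, for $Q_\theta\in\operatorname{Im}P_0$) that does not presuppose a $Q$-flat representative.
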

\end{remark}

As a consequence of Theorem \ref{B} and the work of J. Cao and the second named author \cite{caoc},  
we have the following classification of a special class of closed CR Yamabe solitons:

\begin{corollary}
\label{C1} 
Conjecture \ref{conj2} holds if $(M^3, J, \theta)$ is the smooth boundary of a bounded strictly pseudoconvex domain $\Omega$ 
in a complete Stein manifold $V^{2}$. 
\end{corollary}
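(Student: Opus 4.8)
\emph{Proof proposal.} The plan is to use Theorem \ref{B} to reduce the statement to a purely CR-geometric existence question, and then to quote the work of J.\ Cao and the second named author \cite{caoc} to answer it. Recall that Theorem \ref{B} says: a closed CR Yamabe soliton $(M^3,J,\theta,f,\mu)$ with $P_0 f = 0$ and $f$ nowhere vanishing has vanishing pseudohermitian torsion \emph{provided} the contact class $[\theta]$ contains a contact form $\widetilde\theta$ with vanishing CR $\widetilde Q$-curvature. Since the conclusion "$(M,J,\theta)$ has zero pseudohermitian torsion" is exactly the assertion of Conjecture \ref{conj2}, it therefore suffices to exhibit, for $M^3 = \partial\Omega$ with $\Omega$ a bounded strictly pseudoconvex domain in a complete Stein manifold $V^2$, a smooth function $u$ on $M$ so that $\widetilde\theta = e^{u}\theta \in [\theta]$ satisfies $\widetilde Q \equiv 0$.

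First I would set up the $Q$-curvature equation: under a conformal change $\widetilde\theta = e^{u}\theta$ of the contact form, the CR $Q$-curvature transforms by a fourth-order law of the form $e^{cu}\widetilde Q = Q + \tfrac{1}{2}P_0 u$ (with the appropriate normalizing constant $c$), so producing a $\widetilde Q$-flat contact form in $[\theta]$ amounts to solving the linear PDE $P_0 u = -2Q$ on the closed manifold $M$. For a strictly pseudoconvex CR $3$-manifold that bounds a complex domain — in particular one realized as the boundary of a bounded strictly pseudoconvex domain $\Omega \subset V^2$ — the CR structure $J$ is embeddable and the total CR $Q$-curvature obstruction $\int_M Q\, d\mu$ vanishes; under precisely these hypotheses \cite{caoc} constructs a global smooth solution $u$, hence a $Q$-flat contact form $\widetilde\theta = e^{u}\theta$ in the contact class. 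With such a $\widetilde\theta$ in hand, Theorem \ref{B} applies verbatim to the closed CR Yamabe soliton $(M^3,J,\theta,f,\mu)$ under the standing hypotheses $P_0 f = 0$ and $f$ nowhere vanishing, and yields that $(M,J,\theta)$ has vanishing pseudohermitian torsion. (One can then further combine this with Theorem \ref{A} and Remark (ii) to see that $(M,J,\theta)$ is in fact a closed spherical torsion-free soliton, but only the torsion statement is needed for the corollary.)

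The logical assembly here is short, so the substance of the proof lies entirely in the two inputs: Theorem \ref{B} on the one hand, and the solvability result of \cite{caoc} on the other. Accordingly, the main point to verify carefully — rather than a genuine obstacle — is that the hypotheses of \cite{caoc} are exactly met: namely that the class of CR $3$-manifolds treated there (boundaries of bounded strictly pseudoconvex domains in complete Stein manifolds $V^2$) is precisely our class, that no additional positivity or non-degeneracy assumption on the CR Paneitz operator $P_0$ is required beyond what is available here, and that the statement is for \emph{closed} $M^3$ so that the contact form $\widetilde\theta$ obtained is globally defined and smooth on all of $M$. One should also check the harmless but necessary point that $\widetilde\theta = e^{u}\theta$ is a positive conformal multiple of $\theta$, so that it lies in the same contact class and determines the same underlying CR structure $J$, which is what Theorem \ref{B} uses.
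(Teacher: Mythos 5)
Your proposal is correct and follows essentially the same route as the paper: the paper's proof likewise consists of quoting the result of J.~Cao and the second named author \cite{caoc} that the boundary of a bounded strictly pseudoconvex domain in a complete Stein manifold $V^2$ admits a $\widetilde Q$-flat contact form $\widetilde\theta=e^{2g}\theta$ in the contact class, and then applying Theorem~\ref{B}. Your additional discussion of the conformal transformation law for $Q$ is a reasonable gloss on how \cite{caoc} produces such a contact form, but it is not needed beyond the citation.
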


In particular,  if $M$ is a hypersurface in $\mathbb{C}^{2}$, i.e. $M=\partial
\Omega $ for a bounded domain $\Omega $ in $\mathbb{C}^{2}$, then for any
pluriharmonic function $u:\mathbf{U}\rightarrow \mathbb{R}$ ($\partial 
\overline{\partial }u=0$) with a simply connected $\mathbf{U}\subset 
\overline{\Omega }$, there exists a holomorphic function $w$ in $\mathbf{U}$ 
such that $u=\mathrm{Re}(w)$.

Now define $f:=u|_{M}$, it follows that $f$ is a CR pluriharmonic function 
(see Definition \ref{4}) and  
\begin{equation*}
P_{0}f=0.
\end{equation*}

Hence, we have

\begin{corollary}
\label{C2} Let $(M,J,\theta )$ be a smooth hypersurface in $\mathbb{C}^{2}$.
If $\left( M^{3},J,\theta, f, \mu \right) $ is a closed CR Yamabe soliton
with $f:=u|_{M}$ as above which vanishes nowhere. 
Then $(M,J,\theta )$ is a closed pseudohermitian $%
3$-manifold of constant Tanaka-Webster scalar curvature $\mu $ and vanishing
pseudohermitian torsion.
\end{corollary}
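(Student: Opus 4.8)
The plan is to deduce the corollary from Corollary \ref{C1} and Theorem \ref{A}; essentially all of the analytic work has already been done there and in \cite{caoc}, so what remains is to check that the hypotheses match. First, as recorded in the paragraph preceding the corollary, the pluriharmonicity of $u$ makes $f := u|_M$ a CR pluriharmonic function (Definition \ref{4}), and since every CR pluriharmonic function on a pseudohermitian $3$-manifold lies in the kernel of the CR Paneitz operator, we get $P_0 f = 0$. Thus $(M^3, J, \theta, f, \mu)$ is a closed CR Yamabe soliton with $P_0 f = 0$ and, by assumption, nowhere vanishing $f$ --- precisely the setting of Conjecture \ref{conj2}.

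Second, I would verify that $M$ sits inside $\mathbb{C}^2$ as required by Corollary \ref{C1}. The space $\mathbb{C}^2$ is a complete Stein manifold, and a compact smooth hypersurface $M \subset \mathbb{C}^2$ separates $\mathbb{C}^2$ into a bounded domain $\Omega$ and an unbounded region, so $M = \partial\Omega$. Because $\theta$ is a contact form, the Levi form of $M$ is everywhere non-degenerate; hence on each connected component it has a fixed sign, and a bulge-point argument --- touching $\overline{\Omega}$ from outside with a Euclidean ball of minimal radius --- shows this sign is the strictly pseudoconvex one relative to $\Omega$. Therefore $\Omega$ is a bounded strictly pseudoconvex domain in the complete Stein manifold $\mathbb{C}^2$, and Corollary \ref{C1} applies: the pseudohermitian torsion of $(M, J, \theta)$ vanishes.

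Finally, Theorem \ref{A} applied to the closed CR Yamabe soliton $(M^3, J, \theta, f, \mu)$ gives $f_0 = 0$, i.e.\ $\theta$ has constant Tanaka-Webster scalar curvature equal to $\mu$. Combining this with the vanishing of the torsion completes the proof. The one point that genuinely needs care --- and hence the main obstacle --- is the geometric bookkeeping in the second step: making sure that the contact hypersurface $M$ really does bound a strictly pseudoconvex (rather than pseudoconcave) domain, so that the hypotheses of Corollary \ref{C1}, and thus of \cite{caoc}, are legitimately met. Once that is in place the corollary is immediate.
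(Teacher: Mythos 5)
Your proposal is correct and follows essentially the same route the paper intends: the paper's own "proof" is just the observation preceding the statement that $f=u|_M$ is CR pluriharmonic hence $P_0f=0$, after which one invokes Corollary \ref{C1} (via \cite{caoc}) for the vanishing of the torsion and Theorem \ref{A} for the constancy of the Tanaka--Webster curvature. Your extra care in checking that the compact contact hypersurface actually bounds a \emph{strictly pseudoconvex} domain (nondegeneracy of the Levi form plus the bulge-point argument) is a detail the paper leaves implicit, but it is the right verification and does not change the argument.
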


Secondly, we consider complete three-dimensional pseudo-gradient CR Yamabe solitons defined as follows: 

\begin{definition} A complete pseudohermitian $3$-manifold $(M^{3},  J, \theta )$ is called a complete {\it pseudo-gradient} CR 
Yamabe soliton if there exists a smooth function $\varphi $ on $M$ such that 

\begin{equation*}
\left \{ 
\begin{array}{l}
W+\frac{1}{2}\Delta _{b}\varphi =\mu , \\ 
\varphi _{11}=0,\  \varphi _{0}=0.%
\end{array}%
\right.
\end{equation*}%
Again, it is called {\it shrinking} if $\mu>0$, {\it steady} if $\mu=0$, and {\it expanding} if $\mu<0$. 
\end{definition}

\medskip
For example, by using Lemma \ref{l52}, one can easily check that the
Heisenberg group $$H^3\equiv \{(z,t)\in \mathbb{C}\times\mathbb{R}\ |\ (z_1,t_1)\cdot(z_2,t_2):=(z_1+z_2, t_1+t_2+\mathrm{Im}(z_1\bar{z_2}) ) \},$$ equipped with the standard CR structure and potential function 
$\varphi:= \mu |z|^2$, is a pseudo-gradient CR Yamabe soliton for any $\mu\in \mathbb{R}$. Since it is analogous to the Gaussian Ricci solitons defined on the Euclidean space ${\mathbb R}^n$ with the potential function $\mu |x|^2$, we call  $(H^3, \mu |z|^2)$ a {\it pseudo Gaussian soliton}.
Note that the Heisenberg group also admits CR Yamabe soliton structures of shrinking/steady/expanding type, as defined in (\ref{CRYamabe}), by taking $f=2\mu t$.

For three-dimensional complete pseudo-gradient CR Yamabe solitons, 
motivated by the recent work \cite {csz} of the first author with others on the structure 
of gradient Yamabe solitons, we explore the potential function $\varphi$ and investigate the 
geometry and topology of its level sets. It turns out that $\varphi$ is necessarily an {\sl isoparametric} function which, according to \cite{wang}, makes the critical set of $\varphi$ and regular 
level sets rather special (see Lemma \ref{isop}  and Remark \ref{isopara}): the only singular level sets of $\varphi$ are the smooth focal varieties of $\varphi$ (possibly empty), 
$$\Sigma_+:= \{\varphi = \max_{x\in M}\varphi(x)\}\ \mbox{ and }\ \Sigma_-:= \{\varphi = \min_{x\in M}\varphi(x)\},$$ and each regular level set is a tube over either $\Sigma_+$ or $\Sigma_{-}$.  Furthermore, we prove that each regular level surface must have zero Gaussian curvature with respect to the induced metric of the Webster adapted metric on $M^3$. 
These special geometric and topological features allow us to 
conclude the possible diffeomorphism types of the underlying $3$-manifold $M$.

\begin{theorem}
\label{E} 
Let $(M^{3},J,\theta,\varphi,\mu )$ be a non-trivial three-dimensional complete 
pseudo-gradient CR Yamabe soliton with vanishing torsion. 
Then the potential function $\varphi$ is an isoparametric function and $M$ is diffeomorphic to one of the following spaces:









\smallskip

$\mathbb{R}^3$, $\mathbb{S}^3$, $L(p,q)$, $\mathbb{S}^2\times \mathbb{R}$, $\mathbb{S}^1\times\mathbb{R}^2$, $\mathbb{T}^2\times \mathbb{R}$, $\mathbb{T}^2\times [0,\infty)\mbox{ with }\mathbb{T}^2\times \{0\} \mbox{ collapsing to }\mathbb{S}^1$.

\smallskip
\noindent More precisely, besides possible surface components, the critical set of $\varphi$ can contain at most two curves. Furthermore, 

\smallskip

(i) if the critical set is empty or consists of only surfaces, then $M$ is diffeomorphic to either $\mathbb{R}^3$, or $\mathbb{T}^2\times \mathbb{R}$,  or $S^1 \times \mathbb{R}^2$, where $\mathbb{T}^2$ denotes the 2-torus;

(ii) if the critical set contains only one curve, 
then $M$ is diffeomorphic to either $\mathbb{R}^3$ or $\mathbb{T}^2\times [0,\infty)\mbox{ with }\mathbb{T}^2\times \{0\} \mbox{ collapsing to }S^1$; 

(iii) if the critical set contains two curves, 
then $M$ is diffeomorphic to either $\mathbb{S}^3$, or $\mathbb{S}^2\times \mathbb{R}$, or the lens spaces $L(p,q)$ with $1\leq q<p$. 

\end{theorem}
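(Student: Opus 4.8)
The plan is to exploit the defining equations $\varphi_{11}=0$, $\varphi_0=0$, and $W+\tfrac12\Delta_b\varphi=\mu$ together with the vanishing torsion hypothesis to show that $|\nabla_b\varphi|^2$ (possibly after adding a constant multiple of $\varphi$) is constant along level sets, i.e.\ $\varphi$ is an isoparametric function for the Webster adapted metric $g_\theta$. First I would record the Bochner-type formula in the pseudohermitian setting: differentiating $\varphi_{11}=0$ and using the commutation relations (with $A_{11}=0$), one controls $\Delta_b|\nabla_b\varphi|^2$ and $\mathbf{T}|\nabla_b\varphi|^2$ purely in terms of $W$, $\varphi$, and their derivatives. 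Because $\varphi_0=0$ the function $\varphi$ is basic, so the Reeb flow preserves level sets and the geometry is effectively two-dimensional transverse to $\mathbf{T}$; combined with $W+\tfrac12\Delta_b\varphi=\mu$ this should force $|\nabla_b\varphi|^2$ to be a function of $\varphi$ alone. One must also verify that $\nabla\varphi$ (the full Riemannian gradient for $g_\theta$) has constant length on level sets — but since $\varphi_0=0$ the component of $\nabla\varphi$ along $\mathbf{T}$ vanishes, so $|\nabla\varphi|_{g_\theta}=|\nabla_b\varphi|$, and isoparametricity in the CR sense coincides with the Riemannian one. Then $\Delta_{g_\theta}\varphi$ is likewise a function of $\varphi$, which is exactly the isoparametric condition of Wang \cite{wang}.

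Once $\varphi$ is isoparametric, I would invoke Q.-M. Wang's structure theory: the singular level sets are the smooth focal submanifolds $\Sigma_\pm$ (where $\varphi$ attains its max/min, if finite), and every regular level set is a tube over $\Sigma_+$ or over $\Sigma_-$; $M$ is then built by gluing two disk bundles over $\Sigma_+$ and $\Sigma_-$ along a regular level set, or is a product $(\text{regular level set})\times\mathbb{R}$ when there are no focal sets, or a half-cylinder when there is exactly one. The next step is to pin down the regular level surfaces $\Sigma_c$. Here I would compute the Gaussian curvature of $\Sigma_c$ with respect to the metric induced from $g_\theta$: using the Gauss equation, the sectional curvature of $g_\theta$ in the relevant plane, the Webster curvature expressed through $W$ via the torsion-free pseudohermitian curvature formulas, and the second fundamental form of $\Sigma_c$ (which is governed by the Hessian of $\varphi$, hence by $W$ through the soliton equation and by $\varphi_{11}=0$), one should find that the two contributions cancel and $K_{\Sigma_c}\equiv 0$. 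Thus each regular level surface is flat and closed-or-complete, so it is diffeomorphic to $\mathbb{R}^2$, $\mathbb{S}^1\times\mathbb{R}$, $\mathbb{T}^2$, $\mathbb{S}^2$ (only if it bounds and the geometry degenerates), or — reading off from the tube structure over a curve — a sphere or a lens space as the boundary of a solid-torus gluing.

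Finally I would run the topological case analysis exactly as in \cite{csz}, organized by the critical set of $\varphi$ (equivalently $\Sigma_+\cup\Sigma_-$ together with any interior critical surfaces). A flat closed surface is $\mathbb{T}^2$; a flat complete noncompact one is $\mathbb{R}^2$ or $\mathbb{S}^1\times\mathbb{R}$. If the critical set is empty or consists only of surfaces, $M$ fibers over $\mathbb{R}$ (or over an interval with closed surface ends) with flat fiber, giving $\mathbb{R}^3$, $\mathbb{T}^2\times\mathbb{R}$, or $\mathbb{S}^1\times\mathbb{R}^2$. If the critical set contains exactly one curve $\Sigma_+$ (the case of a single focal curve), $M$ is a tubular neighborhood of that curve extended to infinity: either a trivial $\mathbb{R}^2$-bundle giving $\mathbb{R}^3$, or a twisted/collapsing model $\mathbb{T}^2\times[0,\infty)$ with $\mathbb{T}^2\times\{0\}$ collapsing to $\mathbb{S}^1$. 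If both $\Sigma_+$ and $\Sigma_-$ are curves, $M$ is a union of two solid tori glued along a torus, hence $\mathbb{S}^3$, $\mathbb{S}^1\times\mathbb{S}^2$ (realized here as $\mathbb{S}^2\times\mathbb{R}$ in the noncompact degeneration, or as a genuine gluing), or a lens space $L(p,q)$. The main obstacle I anticipate is the second step — proving $K_{\Sigma_c}\equiv 0$: it requires a careful matching of the induced surface curvature against the ambient Webster curvature and the shape operator, and one must be sure the vanishing-torsion hypothesis is used in full (it is what makes the pseudohermitian curvature tensor determined by $W$ alone in dimension three, and what lets the Hessian estimates from $\varphi_{11}=0$ close up). Establishing isoparametricity cleanly, with the correct handling of the Reeb direction, is the other delicate point, but $\varphi_0=0$ makes it tractable.
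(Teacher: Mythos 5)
Your skeleton --- isoparametricity of $\varphi$, Wang's focal-variety/tube structure, flatness of the regular level surfaces, and a case analysis organized by the critical set --- is the same as the paper's. The isoparametricity step is essentially right, and the paper does it more directly than your Bochner-type plan: from $\varphi_{11}=0$ and $\varphi_{1\bar 1}=\tfrac12\Delta_b\varphi=\mu-W$ one gets in one line $\nabla(|\nabla_b\varphi|^2)=(\mu-W)\nabla\varphi$, so $|\nabla_b\varphi|$ is constant on level sets of $\varphi$; the conserved quantity $W+\tfrac12|\nabla_b\varphi|^2-\mu\varphi=C$ then makes $W$, hence $\Delta_b\varphi$, constant on level sets as well. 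However, the two steps you flag as delicate contain genuine gaps as you have sketched them.

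First, the mechanism behind $K_{\Sigma_c}\equiv 0$ is not the one you describe. Because $\varphi_0=0$, the Reeb field $\mathbf{T}$ is \emph{tangent} to every regular level surface, so the tangent plane of $\Sigma_c$ is spanned by $E_3=\lambda\mathbf{T}$ and a horizontal vector $E_2\perp\nabla_b\varphi$. The ambient sectional curvature entering the Gauss equation is therefore the mixed curvature $Rm^{\lambda}(E_2,E_3,E_2,E_3)$, which for vanishing torsion equals $\lambda^{-2}$ identically and does not see $W$ at all; likewise the Hessian of $\varphi$ enters only through $\mathrm{II}(E_2,E_2)$, which is multiplied by $\mathrm{II}(E_3,E_3)=0$ and drops out. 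The actual cancellation is $\lambda^{-2}-\mathrm{II}(E_2,E_3)^2=\lambda^{-2}-\lambda^{-2}=0$, where $\mathrm{II}(E_2,E_3)=\lambda^{-1}$ comes from the connection forms $\omega_1^3=\lambda^{-1}\omega^2$, $\omega_2^3=-\lambda^{-1}\omega^1$ of the contact structure, not from the soliton equation. Your proposed cancellation (``Webster curvature via $W$ against the shape operator governed by the Hessian of $\varphi$'') is the picture from the Riemannian case \cite{csz}, where level surfaces are orthogonal to $\nabla\varphi$ in a conformally flat metric; here the level surfaces are ``vertical'' and carrying out your plan as written would not close up. Relatedly, a regular level surface is a complete flat surface, hence $\mathbb{R}^2$, a cylinder, or $\mathbb{T}^2$; it cannot be $\mathbb{S}^2$ or a lens space as your list suggests. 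Second, in the case of two closed critical curves the Heegaard splitting by two solid tori a priori yields $\mathbb{S}^3$, $\mathbb{S}^2\times\mathbb{S}^1$, or $L(p,q)$, and your sketch leaves $\mathbb{S}^2\times\mathbb{S}^1$ in as ``a genuine gluing.'' The theorem excludes it; the paper does so by noting that a closed torsion-free CR $3$-manifold must have even first Betti number. Without that (or an equivalent) argument the conclusion is strictly weaker than the stated theorem.
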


\smallskip
Moreover, when $M^3$ is simply-connected, we can further show the following
\begin{corollary}
\label{F} 
Let $(M^{3},J,\theta,\varphi,\mu)$ be a simply-connected complete   
pseudo-gradient CR Yamabe soliton with vanishing torsion and $W>\mu$ (or $W<\mu$), then it must be trivial or diffeomorphic to $\mathbb{R}^3$.
\end{corollary}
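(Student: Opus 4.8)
The plan is to deduce Corollary \ref{F} from Theorem \ref{E} by showing that simple-connectivity together with the hypothesis $W>\mu$ (respectively $W<\mu$) rules out all but two of the seven diffeomorphism types listed in Theorem \ref{E}. First I would invoke Theorem \ref{E} to reduce to the seven candidate spaces $\mathbb{R}^3$, $\mathbb{S}^3$, $L(p,q)$, $\mathbb{S}^2\times\mathbb{R}$, $\mathbb{S}^1\times\mathbb{R}^2$, $\mathbb{T}^2\times\mathbb{R}$, and $\mathbb{T}^2\times[0,\infty)$ with $\mathbb{T}^2\times\{0\}$ collapsing to $\mathbb{S}^1$. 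Among these, simple-connectivity immediately eliminates the lens spaces $L(p,q)$ with $p\ge 2$ (their fundamental group is $\mathbb{Z}/p\mathbb{Z}$), $\mathbb{S}^1\times\mathbb{R}^2$ and $\mathbb{T}^2\times\mathbb{R}$ (fundamental groups $\mathbb{Z}$ and $\mathbb{Z}^2$), and the collapsing model $\mathbb{T}^2\times[0,\infty)$ (whose fundamental group is $\mathbb{Z}$, the image of the generator of the collapsing circle). So the only simply-connected possibilities left are $\mathbb{R}^3$, $\mathbb{S}^3$, and $\mathbb{S}^2\times\mathbb{R}$.

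Next I would rule out the two remaining non-$\mathbb{R}^3$ candidates $\mathbb{S}^3$ and $\mathbb{S}^2\times\mathbb{R}$ using the sign hypothesis on $W-\mu$. By Definition 1.5, the soliton equation reads $\tfrac12\Delta_b\varphi=\mu-W$, so the hypothesis $W>\mu$ forces $\Delta_b\varphi<0$ everywhere, and $W<\mu$ forces $\Delta_b\varphi>0$ everywhere; in particular $\varphi$ has no interior local maximum in the first case and no interior local minimum in the second, by the strong maximum principle for the sub-Laplacian. If $M$ were diffeomorphic to the closed manifold $\mathbb{S}^3$, then $\varphi$ would attain both its maximum and its minimum on $M$, contradicting the strict sign of $\Delta_b\varphi$ (at an interior maximum of a smooth function one has $\Delta_b\varphi\le 0$, and at an interior minimum $\Delta_b\varphi\ge 0$, but one of these is always a genuine contradiction with the strict inequality — and since $\varphi$ is non-constant by non-triviality, we actually get a contradiction in both the $W>\mu$ and $W<\mu$ cases). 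For $\mathbb{S}^2\times\mathbb{R}$ I would recall from the discussion preceding Theorem \ref{E} that in the $\mathbb{S}^2\times\mathbb{R}$ case the critical set of $\varphi$ contains two curves (case (iii) of Theorem \ref{E}), i.e. $\varphi$ attains both $\Sigma_+=\{\varphi=\max\varphi\}$ and $\Sigma_-=\{\varphi=\min\varphi\}$, so again $\varphi$ has an interior point where $\Delta_b\varphi\ge 0$ and one where $\Delta_b\varphi\le 0$ — forbidden by the strict sign of $\mu-W$. (Alternatively, on $\mathbb{S}^2\times\mathbb{R}$ the focal varieties $\Sigma_\pm$ being nonempty compact level sets where $\varphi$ is maximized/minimized directly contradicts the one-sided maximum principle.) Hence $\mathbb{S}^3$ and $\mathbb{S}^2\times\mathbb{R}$ are both excluded, leaving only $\mathbb{R}^3$ or the trivial soliton.

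I would then address the wording ``trivial or diffeomorphic to $\mathbb{R}^3$'': the word ``trivial'' covers the degenerate case $\varphi\equiv\mathrm{const}$ (equivalently $W\equiv\mu$), which is formally excluded by the strict sign hypothesis $W>\mu$ or $W<\mu$ unless one allows it as a limiting/edge case, so the honest conclusion is that a \emph{non-trivial} such soliton is diffeomorphic to $\mathbb{R}^3$; I would state it exactly as in the corollary to keep the phrasing consistent. The main obstacle — really the only delicate point — is making sure the maximum-principle argument is airtight for the two critical-curve case $\mathbb{S}^2\times\mathbb{R}$ and the collapsing torus case: one must confirm that in each of those diffeomorphism types the potential function genuinely attains an interior extremum of the ``wrong'' sign, which is exactly what the isoparametric structure and the description of $\Sigma_\pm$ from Lemma \ref{isop} (and Wang's regularity \cite{wang}) guarantee. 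Since $\varphi_0=0$ and $\varphi_{11}=0$, the sub-Laplacian $\Delta_b\varphi$ controls the full Hessian behavior of $\varphi$ transverse to level sets, so the strong maximum principle for $\Delta_b$ applies verbatim; once that is in hand the deduction is immediate.
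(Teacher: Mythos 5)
Your proposal is correct and follows essentially the same route as the paper: invoke Theorem \ref{E}, use simple connectivity to discard the types with nontrivial $\pi_1$, and use the fixed sign of $\Delta_b\varphi=2(\mu-W)$ to show $\varphi$ cannot attain both a maximum and a minimum, which excludes $\mathbb{S}^3$ and $\mathbb{S}^2\times\mathbb{R}$. The paper phrases this by showing one of $\Sigma_\pm$ is empty and the other, if nonempty, is connected (otherwise a forbidden interior local extremum would appear between components), which is exactly the point you flag as the delicate step; your appeal to the isoparametric/focal-variety structure covers it.
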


\begin{remark}
M. Rumin (\cite{ru}) has proven that a complete pseudohermitian 
$3$-manifold of positive Tanaka-Webster 
scalar curvature and vanishing torsion must be compact.
Hence, if $W\geq \mu>0$, then the soliton is trivial and $W=\mu$. 
In fact, the manifold is the CR $3$-sphere (cf. \cite{kt}).
\end{remark}

The rest of the paper is organized as follows. 
In Section $2$, we review some basic materials of pseudohermitian manifolds 
and recall some basic facts from the CR Yamabe flow. 
In Section $3$, we first explain that CR Yamabe solitons correspond to self-similar solutions 
of the CR Yamabe flow and then prove Theorem 1.1, Theorem 1.2 and the two corollaries.
Complete pseudo-gradient CR Yamabe solitons are defined in Section $4$. Some basic properties
are also derived there.  
In Section $5$, we investigate the structure of pseudo-gradient CR Yamabe solitons  
with vanishing torsion. 

\bigskip
\noindent \textbf{Acknowledgements.} We would like to thank Professor Feng Luo for very helpful discussions related to Theorem 1.3 and Professor Reiko Miyaoka for her comment during the 7th OCAMI-TIMS Workshop which led us to remove an extra assumption of Theorem 1.3 in an early version.
The research of the first author was partially supported by the Science and Technology Development Fund (Macao S.A.R.) Grant FDCT/016/2013/A1, as well as RDG010 grant of University of Macau. Part of the project was done during the visit of the second and the third authors to the University of Macau in spring 2014. They would like to express their thanks to the institution for the warm hospitality.

\section{Preliminaries}

In this section, we first introduce some basic materials about 
pseudohermitian $3$-manifolds, and then collect some basic facts about the CR
Yamabe flow. We refer the reader to \cite{ccw} and \cite{cc} for
more details and \cite{l1,l2} for higher dimensional cases.

Let $M$ be a complete $3$-manifold with an oriented contact structure $\xi $.
There always exists a global contact form $\theta $ with $\xi =\ker \theta $, 
obtained by patching together local ones with a partition of unity. 
The unique vector field $\mathbf{T}$ such that ${\theta }(\mathbf{T})=1$ and 
$L_{\mathbf{T}}{\theta }=0$ or $d{\theta}(\mathbf{T},{\cdot })=0$
is called the Reeb vector field of $\theta $. 
A CR structure compatible with $\xi $ is a smooth endomorphism 
$J:{\xi }{\rightarrow }{\xi }$ such that $J^{2}=-Id$. 
A pseudohermitian structure compatible with $\xi$ is a CR-structure $J$
compatible with $\xi $ together with a global contact form $\theta $. The CR
structure $J$ extends to $\mathbb{C}\otimes \xi $ and decomposes 
$\mathbb{C}\otimes \xi $ into the direct sum of $T_{1,0}$ and $T_{0,1}$ which are
eigenspaces of $J$ with respect to eigenvalues $i$ and $-i$, respectively.

Let $\left \{ \mathbf{T},Z_{1},Z_{\bar{1}}\right \} $ be a frame of $TM\otimes \mathbb{C}$, 
where $Z_{1}\in T_{1,0}$ and $\ Z_{\bar{1}}=\overline{Z_{1}}\in T_{0,1}$, 
and $\left \{ \theta ,\theta^1,\theta ^{\bar{1}}\right \} $ be the coframe dual to it.
Since $d\theta(\mathbf{T}, \cdot)=0$, there exists a positive function $h_{1\bar{1}}$ such that
\begin{equation}
d\theta =ih_{1\bar{1}}\theta ^{1}\wedge \theta ^{\bar{1}}.  \label{100242}
\end{equation}
In this article, we always normalize $Z_{1}$ such that $h_{1\bar{1}}=1$ and
denote $d\mu:= \theta\wedge d\theta$ as the volume form of $M$.

Now we introduce the Levi form $\left \langle \ ,\  \right \rangle _{L_{\theta }}$, which is defined by  
\begin{equation*}
\left \langle V,W\right \rangle _{L_{\theta }}=-i d\theta \left (V\wedge \overline{W}\right ) = d\theta  \left (V\wedge J\overline{W}\right ),
\end{equation*}%
where $V,W\in T_{1,0}$. By defining $\left \langle \overline{V},\overline{W}\right \rangle_{L_{\theta }} =\overline{\left\langle V,W\right \rangle_{L_{\theta }} }$ for all $V,W\in T_{1,0}$,
one can extend the Levi form to $T_{0,1}$. In fact, the Levi form can be further 
extended as a hermitian form on all the tensor bundles composed of $T_{0,1},T_{1,0}$ and their duals.

Let $\theta _{1}{}^{1}$ be the unique purely imaginary $1$-form such that
\begin{equation}
\begin{split}
d\theta ^{1}& =\theta ^{1}\wedge \theta _{1}{}^{1}+\theta \wedge \tau ^{1}, \\
\end{split}
\label{100240}
\end{equation}
where $\tau ^{1}=A^{1}{}_{\bar{1}}\theta ^{\bar{1}}$ is the pseudohermitian torsion.
Then the 1-forms $\theta _{1}{}^{1}$ and $\theta _{\bar{1}}{}^{\bar{1}}:=\overline{\theta _{1}{}^{1}}$, 
as connection coefficients, characterize the pseudohermitian connection $\nabla$ of $(M,J,\theta)$. 
Namely, 
\begin{equation*}
\nabla Z_{1}=\theta _{1}{}^{1}\otimes Z_{1},\quad \nabla Z_{\bar{1}}=\theta
_{\bar{1}}{}^{\bar{1}}\otimes Z_{\bar{1}}\ \ \mbox{ and }\ \ \nabla \mathbf{T}=0.
\end{equation*}%
Note that $\nabla$ can be extended to all tensors naturally. 
Furthermore, we have the following structure equation
\begin{equation}
d\theta _{1}{}^{1}=W\theta ^{1}\wedge \theta ^{\bar{1}}+2i\mathrm{Im}(A^{\bar{1%
}}{}_{1,\bar{1}}\theta ^{1}\wedge \theta ) \label{100241}
\end{equation}%
and $W$ is called the Tanaka-Webster curvature.
Here we denote components of covariant derivatives with indices preceded by
comma; thus write $A^{\bar{1}}{}_{1,\bar{1}}\theta ^{1}\wedge \theta $. The
indices $\{0,1,\bar{1}\}$ indicate derivatives with respect to $\{\mathbf{T},Z_{1},Z_{%
\bar{1}}\}$. For derivatives of a scalar function, we often omit the
comma, for instance, $\varphi _{1}=Z_{1}\varphi ,\  \varphi _{1\bar{1}}=Z_{%
\bar{1}}Z_{1}\varphi -\theta _{1}{}^{1}(Z_{\bar{1}})Z_{1}\varphi ,\  \varphi
_{0}=\mathbf{T}\varphi $ for a (smooth) function $\varphi $. 

For a real-valued function $\varphi $, the subgradient $\nabla _{b}$ is defined by 
$\nabla _{b}\varphi \in \xi $ and $\left \langle V,\nabla _{b}\varphi
\right
\rangle_{L_\theta} =d\varphi (V)$ for all vector fields $V$ tangent to contact
plane. Locally $\nabla _{b}\varphi =\varphi _{\bar{1}}Z_{1}+\varphi _{1}Z_{%
\bar{1}}$. We can use the connection to define the subhessian as the complex
linear map $(\nabla ^{H})^{2}\varphi :T_{1,0}\oplus T_{0,1}\rightarrow
T_{1,0}\oplus T_{0,1}$ with 
\begin{equation*}
(\nabla ^{H})^{2}\varphi (V)=\nabla _{V}\nabla _{b}\varphi .
\end{equation*}%
The sub-Laplacian $\Delta _{b}$ is defined to be the trace of the subhessian%
\begin{equation*}
\Delta _{b}\varphi = \mathrm{tr}\left( (\nabla ^{H})^{2}\varphi \right) 
=\varphi _{1\bar{1}}+\varphi _{\bar{1}1}.
\end{equation*}%

For all $V=V^{1}Z_{1}\in T_{1,0}$, we define
\begin{equation*}
\begin{split}
Ric(V,V)& =WV^{1}V^{\bar{1}}=W|V|_{L_{\theta }}^{2}, \\
Tor(V,V)& =2Re\ iA_{\bar{1}\bar{1}}V^{\bar{1}}V^{\bar{1}}.
\end{split}%
\end{equation*}

We recall the following commutation relations (\cite{l1}).%
\begin{equation}
\begin{array}{ccl}
C_{I,01}-C_{I,10} & = & C_{I,\bar{1}}A_{11}-kC_{I,}A_{11,\bar{1}},
\\ 
C_{I,0\bar{1}}-C_{I,\bar{1}0} & = & C_{I,1}A_{\bar{1}%
\bar{1}}+kC_{I,}A_{\bar{1}\bar{1},1}, \\ 
C_{I,1\bar{1}}-C_{I,\bar{1}1} & = & iC_{I,0}+kWC_{I},
\end{array}
\label{2010a}
\end{equation}
here $C_{I}$ denotes a coefficient of a tensor with multi-index $I$
consisting of only $1$ and $\bar{1}$, and $k$ is the number of $1$'s
minus the number of $\bar{1}$'s in $I$.

Next we rewrite the purely imaginary $1$-forms $\theta _{1}{}^{1}=i\sigma
_{1}^{2}$ and $\theta _{\bar{1}}{}^{\bar{1}}=i\sigma _{2}^{1}$ in terms of
real $1$-forms $\sigma_1^2$ and $\sigma_2^1$ with $\sigma _{2}^{1}=-\sigma _{1}^{2}\ 
$by (\ref{100240}). Let $Z_{1}=\frac{1}{2}(e_{1}-ie_{2})$ for real vectors $%
e_{1}$ and $e_{2}.$ It follows that $e_{2}=Je_{1}$. Let $e^{1}=\mathrm{Re}(\theta
^{1})$ and $e^{2}=\mathrm{Im}(\theta ^{1})$. Then $\{e^{1},e^{2},\theta \}$ is
dual to $\{e_{1},e_{2},\mathbf{T}\}$. Now in view of (\ref{100240}) and (\ref{100242}%
), we have the following real version of structure equations (\cite{cchi}) :%
\begin{align*}
d\theta &=2e^{1}\wedge e^{2}, \\
\nabla _{b}e_{1} &=\sigma _{1}^{2}\otimes e_{2},\text{ }\nabla
_{b}e_{2}=\sigma _{2}^{1}\otimes e_{1}, \\
de^{1} &=e^{2}\wedge \sigma _{2}^{1}\text{ mod }\theta ;\text{ }%
de^{2}=e^{1}\wedge \sigma _{1}^{2}\text{ mod }\theta .
\end{align*}%
We also write $\varphi _{e_{i}}=e_{i}\varphi $ and $\nabla _{b}\varphi =%
\frac{1}{2}(\varphi _{e_{1}}e_{1}+\varphi _{e_{2}}e_{2}).$ Moreover we have 
\begin{equation}
\varphi _{e_{i}e_{j}}=e_{j}e_{i}\varphi -\sigma _{i}^{k}(e_{j})\varphi
_{e_{k}}  \label{2014da}
\end{equation}
for $i,\ j,\ k=1,\ 2$ \ and%
\begin{equation}
\Delta _{b}\varphi =(\varphi _{1\bar{1}}+\varphi _{\bar{1}1})=\frac{1}{2}%
(\varphi _{e_{1}e_{1}}+\varphi _{e_{2}e_{2}}).  \label{2014d}
\end{equation}
The real version of the commutation relations are
\begin{equation}
\begin{array}{lcl}
\varphi _{e_{1}e_{2}}-\varphi _{e_{2}e_{1}} & = & 2\varphi _{0} \\ 
\varphi _{0e_{1}}-\varphi _{e_{1}0} & = & \varphi _{e_{1}}\text{Re}%
A_{11}-\varphi _{e_{2}}\text{Im}A_{11} \\ 
\varphi _{0e_{2}}-\varphi _{e_{2}0} & = & \varphi _{e_{1}}\text{Im}%
A_{11}+\varphi _{e_{2}}\text{Re}A_{11} \\ 
\varphi _{e_{1}e_{1}e_{2}}-\varphi _{e_{1}e_{2}e_{1}} & = & 2\varphi
_{e_{1}0}-2\varphi _{e_{2}}W \\ 
\varphi _{e_{2}e_{1}e_{2}}-\varphi _{e_{2}e_{2}e_{1}} & = & 2\varphi
_{e_{2}0}+2\varphi _{e_{1}}W.%
\end{array}
\label{2010b}
\end{equation}

We also recall some definitions for later purposes.

\begin{definition}[\cite{cl,kt}]
Let $(M,J,\theta)$ be a three-dimensional pseudohermitian manifold. 
Then the followings are equivalent:

(i) Its Cartan curvature tensor vanishes, i.e.,
\begin{equation}\label{spherical}
Q_{11}=\frac{1}{6}W_{11}+\frac{i}{2}WA_{11}-A_{11,0}-\frac{2i}{3}A_{11,\bar{1}1}=0.
\end{equation}

(ii) It is locally CR equivalent to the standard pseudohermitian $3$-sphere $(\mathbf{S}^{3},\widehat{J},\widehat{\theta })$. 

If such properties hold, then we call the CR structure $J$ spherical 
and $(M,J,\theta)$ a spherical pseudohermitian $3$-manifold. 
In particular, the spherical structure is CR invariant.

\end{definition}

\begin{definition}
Let $(M, J, \theta )$ be a three-dimensional pseudohermitian manifold without boundary.
A piecewise smooth curve $\gamma :[0,1]\rightarrow M$ is said to be a
Legendrian curve if $\gamma \, ^{\prime }(t)\in \xi $ whenever $\gamma \,
^{\prime }(t)$ exists. The length of $\gamma $ is then defined by 
\begin{equation*}
l(\gamma )=\int_{0}^{1}h(\gamma\,^{\prime }(t),\gamma \, ^{\prime }(t))^{%
\frac{1}{2}}dt,
\end{equation*}%
where $h(X, Y)=d\theta (X, JY)$. 
The Carnot-Carath\'{e}odory distance $d_{c}$ between any two points $p,q\in M$
is defined by 
\begin{equation*}
d_{c}(p,q)=\text{inf}\left \{ l(\gamma )|\  \gamma \in C_{p,q}\right \} ,
\end{equation*}%
where $C_{p,q}$ is the set of all Legendrian curves which join $p$ and $q$.
By Chow's connectivity theorem \cite{cho}, for any two points $p, q\in M$, there always exists a Legendrian
curve joining $p$ and $q$, so the distance is finite. We say $(M, J, \theta )$ is complete
if $(M,d_{c})$ is a complete metric space. 
\end{definition}

Finally, in the following, we recall some basic facts about the CR Yamabe flow (\ref{1b}) 
on a closed pseudohermitian $3$-manifold $\left( M^{3},J,\mathring{\theta}\right)$ with 
$\theta (0)=\mathring{\theta}$.
From (\cite{cc}), we have

\begin{lemma}
\label{lem1} Let $\left( M^{3}, J, \mathring{\theta}\right) $
be a closed pseudohermitian $3$-manifold. Under the CR Yamabe flow (\ref{1b}%
), we have%
\begin{equation}
\frac{\partial }{\partial t}W=4\Delta _{b}W+2W^{2}  \label{10110}
\end{equation}%
and 
\begin{equation}
\frac{\partial }{\partial t}A_{11}=2WA_{11}-2iW_{11}.  \label{10115}
\end{equation}%
Here $\Delta_{b}$, $A_{11}$ and the covariant derivatives are with respect to $%
\theta \left( x,t\right) $ which is in the contact class $[\mathring{\theta}%
(x)]$ with $\theta (x,0)=\mathring{\theta}(x){.}$
\end{lemma}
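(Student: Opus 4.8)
The plan is to prove both evolution equations by a direct first-variation computation along the flow. Since the CR structure $J$ is fixed by (\ref{1b}) and a conformal change of contact form preserves the contact distribution $\xi=\ker\theta$, we may write $\theta(t)=e^{2u(\cdot\,,t)}\mathring{\theta}$ with $u(\cdot\,,0)=0$; the flow equation $\partial_t\theta=-2W(t)\theta(t)$ is then equivalent to $\partial_t u=-W(t)$. All connection, torsion and curvature quantities at time $t$ are taken with respect to $\theta(t)$, with the admissible frame $\{Z_1,\mathbf{T}\}$ normalized so that $h_{1\bar{1}}\equiv1$; this still leaves a residual $t$-dependent $U(1)$-rotation of $Z_1$, which we fix once and for all by requiring the $\theta^1$-coefficient of $\partial_t\theta^1$ to be real-valued.

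First I would differentiate the structure equation (\ref{100242}). Writing $dW=W_1\theta^1+W_{\bar{1}}\theta^{\bar{1}}+W_0\theta$ and using that $\theta^1(t)$ annihilates the fixed bundle $T_{0,1}$ for every $t$ (so $\partial_t\theta^1$ has no $\theta^{\bar{1}}$-component), the identity $\partial_t(d\theta)=d(\partial_t\theta)=-2\,dW\wedge\theta-2W\,d\theta$, combined with $d\theta=i\theta^1\wedge\theta^{\bar{1}}$, forces
\[
\partial_t\theta^1=-W\,\theta^1-2iW_{\bar{1}}\,\theta .
\]
The crucial feature here is the $\theta$-component $-2iW_{\bar{1}}\theta$: it records that the Reeb field $\mathbf{T}=\mathbf{T}(t)$ of $\theta(t)$ is itself moving, and it is precisely this term that will feed a second covariant derivative of $W$ into the torsion and curvature evolutions. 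Next I would substitute this into the $t$-derivative of the connection equation (\ref{100240}); comparing coefficients against the basis $\{\theta^1\wedge\theta^{\bar{1}},\ \theta^1\wedge\theta,\ \theta^{\bar{1}}\wedge\theta\}$ of $2$-forms determines $\partial_t\theta_1{}^1$, while the $\theta^{\bar{1}}\wedge\theta$-component (which does not involve $\partial_t\theta_1{}^1$) yields directly the torsion evolution (\ref{10115}). Finally, feeding the expression for $\partial_t\theta_1{}^1$ into the $t$-derivative of (\ref{100241}) and reading off the coefficient of $\theta^1\wedge\theta^{\bar{1}}$ gives (\ref{10110}); here the diffusion term $4\Delta_bW=4(W_{1\bar{1}}+W_{\bar{1}1})$ arises from exterior-differentiating the $\theta^1$- and $\theta^{\bar{1}}$-components of $\partial_t\theta_1{}^1$, and the commutation relations (\ref{2010a}) provide a convenient cross-check.

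Equivalently, and more quickly, one may invoke the classical conformal transformation laws on a pseudohermitian $3$-manifold: with the normalizations above, $\hat\theta=e^{2f}\theta$ satisfies $\hat W=e^{-2f}\bigl(W-4\Delta_bf-4|\nabla_bf|^2_{L_\theta}\bigr)$ and $\hat A_{11}=e^{-2f}\bigl(A_{11}+2if_{11}-4if_1f_1\bigr)$ (cf. \cite{cc}). Taking $f=u(\cdot\,,t+s)-u(\cdot\,,t)$, so $f=0$ and $\partial_s f=\partial_t u=-W$ at $s=0$, and differentiating at $s=0$ — the quadratic terms in $f$ drop out, and the non-conformal gradient terms in the transformation of $\Delta_b$ cancel, leaving $\partial_t W+2(\partial_t u)W=-4\Delta_b(\partial_t u)$ — one obtains at once $\partial_t W=4\Delta_bW+2W^2$ and $\partial_t A_{11}=2WA_{11}-2iW_{11}$.

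The computation is entirely routine; the only real obstacle is bookkeeping. One must keep the various sign and normalization conventions consistent — the choice $h_{1\bar{1}}\equiv1$, the sign of $\Delta_b$ in (\ref{2014d}), the index placement in the torsion $\tau^1=A^1{}_{\bar{1}}\theta^{\bar{1}}$ and in the definition of $W$ via (\ref{100241}) — and, in the structure-equation approach, one must not forget the motion of the Reeb vector field: dropping the $\theta$-component of $\partial_t\theta^1$ would both distort the $-2iW_{11}$ term in (\ref{10115}) and produce an incorrect $\partial_t\theta_1{}^1$, hence a wrong (\ref{10110}).
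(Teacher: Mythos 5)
Your argument is correct. The paper itself offers no proof of this lemma — it simply quotes it from \cite{cc} — and your first method (differentiating the structure equations (\ref{100242}), (\ref{100240}), (\ref{100241}) in $t$, after determining $\partial_t\theta^1=-W\theta^1-2iW_{\bar 1}\theta$ from $\partial_t(d\theta)=d(\partial_t\theta)$) is precisely the standard derivation carried out there; in particular you correctly identify the two points where such computations usually go wrong, namely the absence of a $\theta^{\bar 1}$-component in $\partial_t\theta^1$ (since $J$ and hence $T_{0,1}$ are fixed) and the presence of the $\theta$-component recording the motion of the Reeb field, and matching the $\theta\wedge\theta^{\bar 1}$-components does yield $\partial_t A^1{}_{\bar 1}=2WA^1{}_{\bar 1}+2iW_{\bar 1\bar 1}$, i.e.\ (\ref{10115}) after conjugation. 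Your alternative route via the conformal transformation laws is also sound: with the $3$-dimensional conformal sublaplacian normalization $\hat W e^{3f}=-4\Delta_b e^{f}+We^{f}$ one gets exactly $\hat W=e^{-2f}\bigl(W-4\Delta_b f-4|\nabla_b f|^2\bigr)$, and the linearizations at $f=0$ with $\dot f=-W$ reproduce (\ref{10110}) and (\ref{10115}); the only caveat, which you flag yourself, is that the conformal weight $e^{-2f}$ in the torsion law must be kept (it is what produces the $2WA_{11}$ term), so the bookkeeping of which coframe the components refer to is essential.
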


Now it follows from applying the maximum principle to (\ref{10110})  that

\begin{lemma}
\label{lem2} Let $\left( M^{3},J,\mathring{\theta}\right) $
be a closed pseudohermitian $3$-manifold with positive initial
Tanaka-Webster curvature $\mathring{W}>0$. Then 
\begin{equation*}
W(t) >0
\end{equation*}%
is preserved under the CR Yamabe flow (\ref{1b}).
\end{lemma}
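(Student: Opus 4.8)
The plan is to prove Lemma \ref{lem2} by applying the scalar maximum principle to the reaction--diffusion equation \eqref{10110} for the Tanaka-Webster curvature $W$. First I would observe that \eqref{10110} has the form $\partial_t W = 4\Delta_b W + 2W^2$, a strictly parabolic equation (the sub-Laplacian $\Delta_b$ on a closed manifold being a sum-of-squares operator that satisfies the hypotheses of the maximum principle once the flow is strictly pseudoconvex, which is preserved for short time and hence, by the argument itself, for all time the solution exists). Set $W_{\min}(t) := \min_{x \in M} W(x,t)$. Since $M$ is closed, this minimum is attained and $W_{\min}(t)$ is a locally Lipschitz function of $t$.

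The key step is to bound $W_{\min}(t)$ from below by comparing with the ODE obtained by dropping the diffusion term. At a spatial point $x_0$ where the minimum is attained at time $t$, we have $\Delta_b W(x_0,t) \geq 0$, since $x_0$ is an interior minimum of $W(\cdot,t)$ on the (boundaryless) manifold $M$ and $\Delta_b$ is a second-order operator with nonnegative characteristic form. Hence, in the sense of the lower Dini derivative,
\begin{equation*}
\frac{d}{dt} W_{\min}(t) \;\geq\; 4\,\Delta_b W(x_0,t) + 2\,W(x_0,t)^2 \;\geq\; 2\,W_{\min}(t)^2 \;\geq\; 0.
\end{equation*}
Therefore $W_{\min}(t)$ is non-decreasing in $t$, so $W_{\min}(t) \geq W_{\min}(0) = \min_M \mathring{W} > 0$ by hypothesis. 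This gives $W(x,t) > 0$ for all $x$ and all $t$ in the interval of existence, which is exactly the assertion.

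The only genuine subtlety — and the point I would treat carefully rather than wave through — is the justification that the scalar parabolic maximum principle applies here: one must note that $\Delta_b$, although only subelliptic (not elliptic), still satisfies the maximum principle in the form used above (an interior spatial minimum forces $\Delta_b W \geq 0$), because $\Delta_b \varphi = \varphi_{1\bar 1} + \varphi_{\bar 1 1}$ is the trace of the subhessian and at a minimum on the contact distribution the subhessian is nonnegative; combined with Hamilton's trick for differentiating $W_{\min}(t)$, this is all that is needed. I do not expect any real obstacle beyond recording this remark, since \eqref{10110} is already established in Lemma \ref{lem1}.
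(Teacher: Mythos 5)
Your proof is correct and follows exactly the route the paper takes: the paper simply states that Lemma \ref{lem2} follows by applying the maximum principle to the evolution equation (\ref{10110}), and your argument fills in the standard details (nonnegativity of $\Delta_b W$ at a spatial minimum despite $\Delta_b$ being only subelliptic, plus Hamilton's trick for $W_{\min}(t)$). No discrepancy to report.
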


Moreover, if $J$ is spherical so that %
\begin{equation*}
Q_{11}=\frac{1}{6}W_{11}+\frac{i}{2}WA_{11}-A_{11,0}-\frac{2i}{3}A_{11,\bar{1%
}1}=0,
\end{equation*}%
i.e., 
\begin{equation}
\frac{1}{6}W_{11}=-\frac{i}{2}WA_{11}+A_{11,0}+\frac{2i}{3}A_{11,\bar{1}1}, 
\label{10120}
\end{equation}%
then, by using this and the commutation relation, one can rewrite the RHS of (\ref%
{10115}) as%
\begin{equation*}
4\left( A_{11,\bar{1}1}+A_{11,1\bar{1}}-4iA_{11,0}\right) -12WA_{11}.
\end{equation*}
Thus, we have%
\begin{equation*}
\frac{\partial }{\partial t}A_{11}=-4\mathcal{L}_{4}A_{11}-12WA_{11}, 
\end{equation*}%
so the highest ``weight" term is just $-4$ times the generalized Folland-Stein
operator 
\begin{equation*}
\mathcal{L}_{\alpha }=-\Delta _{b}+i\alpha \mathbf{T}
\end{equation*}
acting on $A_{11}$ with $\alpha =4$. Since $\alpha =4$ is clearly not an odd integer, $-%
\mathcal{L}_{4}$ is subelliptic on a closed pseudohermitian $3$-manifold. It
follows from the standard theory for subelliptic equations that we have

\begin{lemma}
\label{lem3} \ Let $\left( M^{3},J,\mathring{\theta}\right) $
be a closed spherical pseudohermitian $3$-manifold with vanishing initial
torsion $\mathring{A}_{11}=0$. Then%
\begin{equation*}
A_{11}\left( t\right) =0
\end{equation*}%
is preserved under the CR Yamabe flow (\ref{1b}).
\end{lemma}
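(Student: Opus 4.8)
The plan is to promote the evolution equation of the torsion into a closed linear subelliptic parabolic equation for $A_{11}$ with vanishing initial data, and then appeal to uniqueness. The crucial first point is that sphericity is preserved along the flow: the CR Yamabe flow (\ref{1b}) deforms only the contact form $\theta$ and keeps the CR structure $J$ fixed, and sphericity is a CR invariant --- a property of $J$ alone --- so $J(t)\equiv J$ remains spherical, and the Cartan identity (\ref{spherical}), equivalently (\ref{10120}), holds for all $t\in[0,T)$. Substituting (\ref{10120}) into the evolution equation (\ref{10115}) of Lemma \ref{lem1} and using the commutation relations (\ref{2010a}) to collect the derivative terms, exactly as indicated in the excerpt, one obtains
\[
\frac{\partial}{\partial t}A_{11}=-4\mathcal{L}_{4}A_{11}-12WA_{11},
\]
a homogeneous linear evolution equation for $A_{11}$ --- regarded as a section of the relevant complex line bundle over $M$ --- whose leading operator $4\mathcal{L}_{4}=4(-\Delta_{b}+4i\mathbf{T})$ is subelliptic on the closed $3$-manifold $M$ precisely because $\alpha=4$ is not an odd integer, and whose coefficient $W$ is smooth on $M\times[0,T)$ since the flow solution is smooth.

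Now both $A_{11}(t)$ and the identically zero section solve this equation on $M\times[0,T)$ with the same initial value, since $A_{11}(0)=\mathring{A}_{11}=0$. By the uniqueness part of the standard existence-and-uniqueness theory for linear subelliptic parabolic equations --- which rests on the maximal subelliptic estimates for $\mathcal{L}_{4}$, the zeroth-order coefficient $W$ and its time dependence being absorbed by Duhamel's formula together with Gronwall's inequality on short time intervals and then iterated --- one concludes that $A_{11}(t)\equiv 0$ on all of $[0,T)$, which is the assertion.

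The one step requiring genuine care is this uniqueness argument, and the reason is instructive: unlike Lemma \ref{lem2}, where positivity of $W$ propagated by the scalar maximum principle, here the first-order term $i\alpha\mathbf{T}$ in $\mathcal{L}_{4}$ rules out any maximum-principle argument, and it also defeats a crude $L^{2}$-energy estimate --- pairing the equation with $\overline{A_{11}}$ and integrating by parts over $M$ leaves an uncontrolled term of the shape $\int_{M}(\mathrm{Im}\,A_{11})\,\mathbf{T}(\mathrm{Re}\,A_{11})\,d\mu$, which is not dominated by $\int_{M}|\nabla_{b}A_{11}|^{2}\,d\mu$. Hence one must use the subelliptic machinery in earnest; the two things to check along the way are that $\alpha=4$ indeed avoids the exceptional odd integers (so that $\mathcal{L}_{4}$ is maximally subelliptic and the associated parabolic Cauchy problem is well posed) and that the commutator bookkeeping genuinely yields the clean evolution equation displayed above.
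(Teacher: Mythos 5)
Your proposal is correct and follows essentially the same route as the paper: substitute the sphericity identity (\ref{10120}) into the torsion evolution equation (\ref{10115}), use the commutation relations to obtain $\frac{\partial}{\partial t}A_{11}=-4\mathcal{L}_{4}A_{11}-12WA_{11}$ with $\mathcal{L}_{4}$ subelliptic since $\alpha=4$ is not an odd integer, and conclude by uniqueness for the resulting linear subelliptic parabolic equation. Your explicit observations that sphericity is preserved (being a CR-invariant property of the fixed $J$) and that a naive maximum principle or $L^{2}$ energy argument would not suffice are sound refinements of the same argument, not a different approach.
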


\section{The Structure of Closed three-dimensional CR Yamabe Solitons}

In this section, by using the Harnack quantity derived in (\ref{3}), we show that every
closed three-dimensional CR Yamabe soliton has constant Tanaka-Webster curvature.

We first recall a result from \cite{g} (see also \cite{cl}, Lemma 3.4 and Lemma 3.5).

\begin{lemma}\label{l41} 
Let $\left( M^{2n+1}, J, \theta \right) $ be a pseudohermitian $2n+1$-manifold. 
For any smooth function $f$ on $M$, let $X_{f}$ be the vector field uniquely defined by 
\begin{equation}
X_{f}\rfloor \ d\theta =df\text{ mod }\theta  \label{41a}
\end{equation}%
and 
\begin{equation}
\ X_{f}\rfloor \theta =-f.  \label{41c}
\end{equation}%
Then 
\begin{equation}
X_{f} =if_{\alpha}Z_{\bar{\alpha}}-if_{\bar{\alpha}}Z_{\alpha}-f\mathbf{T}  \label{41b}
\end{equation}%
and it is a smooth infinitesimal contact diffeomorphism of 
$\left( M^{2n+1}, \theta \right)$. 
Conversely, every smooth infinitesimal contact diffeomorphism is of the
form $X_{f}$ for some smooth function $f$. Moreover, $L_{X_{f}}J$ has the following expression
\begin{equation*}
L_{X_{f}}J \equiv 2(f_{\alpha\alpha}+iA_{\alpha\alpha}f)\theta^{\alpha}\otimes Z_{\bar{\alpha}
}+2(f_{\bar{\alpha}\bar{\alpha}}-iA_{\bar{\alpha}\bar{\alpha}}f)\theta^{\bar{\alpha}}\otimes Z_{\alpha}\  \mathrm{mod }\ \theta .
\end{equation*}
\end{lemma}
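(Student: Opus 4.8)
The plan is to treat the three assertions in turn, using Cartan's formula $L_{X}=d\circ\iota_{X}+\iota_{X}\circ d$ throughout. First I would expand an arbitrary vector field in the frame $\{\mathbf{T},Z_{\alpha},Z_{\bar\alpha}\}$ as $X=a^{\alpha}Z_{\alpha}+a^{\bar\alpha}Z_{\bar\alpha}+b\,\mathbf{T}$. Since $\theta$ annihilates $\xi$ and $\theta(\mathbf{T})=1$, condition (\ref{41c}) forces $b=-f$. Contracting $d\theta=i\,h_{\alpha\bar\beta}\,\theta^{\alpha}\wedge\theta^{\bar\beta}$ from (\ref{100242}) with $X$ and using the normalization $h_{\alpha\bar\beta}=\delta_{\alpha\bar\beta}$ gives $\iota_{X}d\theta=i\bigl(a^{\alpha}\theta^{\bar\alpha}-a^{\bar\alpha}\theta^{\alpha}\bigr)$; comparing with $df\equiv f_{\alpha}\theta^{\alpha}+f_{\bar\alpha}\theta^{\bar\alpha}$ mod $\theta$ turns condition (\ref{41a}) into $a^{\alpha}=-if_{\bar\alpha}$ and $a^{\bar\alpha}=if_{\alpha}$, which is exactly (\ref{41b}); note that $a^{\bar\alpha}=\overline{a^{\alpha}}$ when $f$ is real, so $X_{f}$ is a smooth real vector field. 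For uniqueness, two solutions of (\ref{41a})--(\ref{41c}) differ by a vector field lying in $\xi=\ker\theta$ and annihilated by $d\theta|_{\xi}$, which is nondegenerate because $\theta$ is a contact form, so they coincide.

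Next I would observe that the same contraction in fact gives $\iota_{X_{f}}d\theta=f_{\alpha}\theta^{\alpha}+f_{\bar\alpha}\theta^{\bar\alpha}=df-f_{0}\theta$, whence Cartan's formula yields $L_{X_{f}}\theta=d(-f)+(df-f_{0}\theta)=-f_{0}\theta$, a multiple of $\theta$; hence $X_{f}$ is a smooth infinitesimal contact diffeomorphism. For the converse, given any infinitesimal contact diffeomorphism $X$ with $L_{X}\theta=\lambda\theta$, I would set $f:=-\iota_{X}\theta$; then $\iota_{X}d\theta=L_{X}\theta-d\,\iota_{X}\theta=\lambda\theta+df\equiv df$ mod $\theta$, and together with $\iota_{X}\theta=-f$ this says $X$ satisfies (\ref{41a})--(\ref{41c}), so $X=X_{f}$ by the uniqueness just established (and necessarily $\lambda=-f_{0}$).

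It remains to compute $L_{X_{f}}J$. Extending $J$ by $J\mathbf{T}=0$ and using $(L_{X_{f}}J)(Y)=[X_{f},JY]-J[X_{f},Y]$, a short calculation with $JZ_{\alpha}=iZ_{\alpha}$ shows that $(L_{X_{f}}J)(Z_{\alpha})$ carries no $T_{1,0}$-component and, modulo $\mathbf{T}$- and $\theta$-contributions (the meaning of ``mod $\theta$'' for such a tensor), equals $2i$ times the $T_{0,1}$-component of $[X_{f},Z_{\alpha}]$. I would then compute that $T_{0,1}$-component by expanding $[X_{f},Z_{\alpha}]$ through the structure equations (\ref{100240})--(\ref{100241}) and the commutation relations (\ref{2010a}) and substituting (\ref{41b}): the $T_{1,0}$- and $\mathbf{T}$-contributions are irrelevant, while the $T_{0,1}$-part assembles into the second covariant derivative $f_{\alpha\alpha}$ together with the pseudohermitian-torsion term $iA_{\alpha\alpha}f$, producing
\[
L_{X_{f}}J\equiv 2(f_{\alpha\alpha}+iA_{\alpha\alpha}f)\,\theta^{\alpha}\otimes Z_{\bar\alpha}+2(f_{\bar\alpha\bar\alpha}-iA_{\bar\alpha\bar\alpha}f)\,\theta^{\bar\alpha}\otimes Z_{\alpha}\quad\text{mod }\theta,
\]
the second summand being the complex conjugate of the first. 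The one genuinely laborious point, and the expected main obstacle, is this last step: keeping careful track of the connection and pseudohermitian-torsion terms in $[X_{f},Z_{\alpha}]$ and verifying that everything apart from the $f_{\alpha\alpha}+iA_{\alpha\alpha}f$ piece either is proportional to $\theta$ or cancels. This is precisely the computation carried out in \cite{g} (see also \cite{cl}); the first two steps, by contrast, are purely formal.
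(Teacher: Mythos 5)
Your proposal is correct as far as it goes, and it is worth noting that the paper itself offers no proof of this lemma: it is stated as a recalled result from Gray \cite{g} and Cheng--Lee \cite{cl} (and later, for the $L_{X_f}J$ formula specifically, the paper again points to \cite{cl}, p.~240). Your derivation of (\ref{41b}) from (\ref{41a})--(\ref{41c}) is complete and right: $\theta(X)=b$ forces $b=-f$, contracting $d\theta=ih_{\alpha\bar\beta}\theta^\alpha\wedge\theta^{\bar\beta}$ gives $\iota_Xd\theta=i(a^\alpha\theta^{\bar\alpha}-a^{\bar\alpha}\theta^\alpha)$, and matching against $df$ mod $\theta$ yields $a^\alpha=-if_{\bar\alpha}$, $a^{\bar\alpha}=if_\alpha$, with reality of $X_f$ following from $\overline{f_{\bar\alpha}}=f_\alpha$; the uniqueness argument via nondegeneracy of $d\theta|_\xi$ is the right one, and the Cartan-formula computation $L_{X_f}\theta=-f_0\theta$ together with the converse (setting $f:=-\iota_X\theta$ and invoking uniqueness) matches exactly the manipulation the paper performs in Section 3 when identifying self-similar solutions. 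The one place where your write-up is not self-contained is the same place the paper is not: the formula for $L_{X_f}J$. Your reduction $(L_{X_f}J)(Z_\alpha)=2i\cdot(\text{the }T_{0,1}\text{-component of }[X_f,Z_\alpha])$ modulo $\mathbf{T}$ is correct and is the right way to organize the computation, but the actual assembly of $f_{\alpha\alpha}+iA_{\alpha\alpha}f$ from the connection and torsion terms is only asserted and deferred to \cite{g,cl}. Since the paper does precisely this, I would not count it as a gap, but be aware that the substantive content of the lemma's last claim still rests on the cited computation rather than on anything you have verified.
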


\begin{remark}
\label{r41} Note that, since $L_{\mathbf{T}}\theta =0$, 
$L_{X_{f}-c\mathbf{T}}\theta=\lambda \theta $ for any constant $c$ if $L_{X_{f}}\theta =\lambda \theta $.
Hence if $X_{f}$ is an infinitesimal contact diffeomorphism, 
so is $X_{\widetilde{f}}:=X_{f}-c\mathbf{T}$ with $\widetilde{f}=f+c$.
\end{remark}

\begin{definition}
A pseudohermitian $3$-manifold $(M^{3}, J,\theta )$ is called a CR Yamabe soliton if there exist a function $f$ and a constant $\mu$ such that 
\begin{equation}
W+\frac{1}{2}f_{0}=\mu  \label{1}
\end{equation}%
and 
\begin{equation}
f_{11}+iA_{11}f=0.  \label{1a}
\end{equation}
It is called \textit{shrinking} if $\mu >0$, \textit{steady} if $\mu =0$,
and \textit{expanding} if $\mu <0$. The equation (\ref{1a}) is corresponding
to the CR vector field condition as in (\ref{3b}) below.
\end{definition}

Next, we relate CR Yamabe solitons and self-similar solutions to the CR Ymamabe flow. 
A special class of solutions to the CR Yamabe flow (\ref{1b}) is given by 
self-similar solutions, whose contact forms $\theta_t$ deform under the CR Yamabe 
flow only by a scaling function depending on $t$ and reparametrizations by a 1-parameter 
family of contact diffeomorphisms; meanwhile, the CR structure $J$ shall be 
invariant under these diffeomorphisms.

Precisely, let $\Phi _{t}:M\rightarrow M$ be a one-parameter family of
contact diffeomorphisms generated by a CR vector field $X_{\widetilde{f}}$ as
above on $M$ with $\Phi _{0}=id_{M}$. Set 
\begin{equation*}
\theta _{t}:=\rho (t)\Phi _{t}^{\ast }\mathring{\theta},\  \  \  \rho (t)>0,\  \
\  \rho (0)=1.
\end{equation*}%
Then $(M, J, \theta _{t})$ is a self-similar solution to the CR Yamabe flow (\ref{1b}) whenever 
\begin{equation*}
-2W\theta _{t}=\frac{\partial }{\partial t}\theta _{t}=\rho ^{\prime
}(t)\Phi _{t}^{\ast }\mathring{\theta}+\rho (t)\Phi _{t}^{\ast }(L_{X_{%
\widetilde{f}}}\mathring{\theta})=(\log \rho )^{\prime }(t)\theta _{t}+L_{X_{%
\widetilde{f}}}\theta _{t}.
\end{equation*}%
It follows from (\ref{41a}) and $d\theta (\mathbf{T},\cdot )=0$ that 
\begin{equation*}
X_{\widetilde{f}}\rfloor \ d\theta _{t}=d\widetilde{f}-\widetilde{f}%
_{0}\theta _{t}
\end{equation*}%
and then 
\begin{equation*}
L_{X\widetilde{_{f}}}\theta _{t}=X_{\widetilde{f}}\rfloor d\theta _{t}+d(X_{%
\widetilde{f}}\rfloor \theta _{t})=X_{f}\rfloor \ d\theta _{t}-d\widetilde{f}%
=-\widetilde{f}_{0}\theta _{t}.
\end{equation*}%
Hence, 
\begin{equation*}
-2W\theta _{t}-(\log \rho )^{\prime }(t)\theta _{t}=L_{X_{\widetilde{f}%
}}\theta _{t}=-\widetilde{f}_{0}\theta _{t}.
\end{equation*}%
In short, every self-similar solution satisfies the equation %
\begin{equation*}
2W\theta _{t}+L_{X_{\widetilde{f}}}\theta _{t}=2W\theta _{t}-\widetilde{f}%
_{0}\theta _{t}=-(\log \rho )^{\prime }(t)\theta _{t}
\end{equation*}%
for some scaling function $\rho (t)>0$ and potential function $\widetilde{f}%
:M\rightarrow \mathbb{R}$. When $t=0$, we obtain the equation  
\begin{equation*}
W-\frac{1}{2}\widetilde{f}_{0}=-\frac{1}{2}\rho ^{\prime }(0)
\end{equation*}%
for $(M, J, \theta)$. 
If we put $f=-\widetilde{f}$ and denote $\mu= -\frac{1}{2}\rho ^{\prime }(0)$ then we have 
\begin{equation*}
W+\frac{1}{2}f_{0}=\mu.
\end{equation*}%
Recall that we still need to make sure $J$ is invariant under the contact diffeomorphism,
i.e.,
\begin{equation}
0=L_{X_{f}}J \equiv 2(f_{11}+iA_{11}f)\theta ^{1}\otimes Z_{\bar{1}
}+2(f_{\bar{1}\bar{1}}-iA_{\bar{1}\bar{1}}f)\theta ^{%
\bar{1}}\otimes Z_{1}\  \mathrm{mod }\ \theta .  \label{3b}
\end{equation}
Thus $f_{11}+iA_{11}f=0$. 
(We refer to \cite{cl}, p. 240, for a proof of the expression of $L_{X_{f}}J$.)

Conversely, starting from a CR Yamabe soliton $(M, J, \mathring{\theta}, f, \mu)$ which
satisfies $W+\frac{1}{2}f_{0}=\mu $ and $f_{11}+iA_{11}f=0$, one can solve equations 
\begin{equation*}
\left \{ 
\begin{array}{rl}
\frac{\partial }{\partial t}\rho (t)|_{t=0} & =-2\mu , \\ 
\rho (0) & =1,%
\end{array}%
\right.
\end{equation*}%
and for $f(t):=f\circ \Phi _{t}$ 
\begin{equation*}
\left \{ 
\begin{array}{rl}
\frac{\partial }{\partial t}\Phi _{t} & =X_{f(t)}, \\ 
\Phi _{0} & =id_{M},%
\end{array}%
\right.
\end{equation*}%
to get $\rho (t)$ and $\Phi _{t}$, and thus a self-similar solution $\theta _{t}=\rho (t)\Phi _{t}^{\ast }\mathring{\theta}$ 
of the CR Yamabe flow. Note
that it suffices to take $\rho (t)=1-2\mu t$. This would be the unique solution once we
establish the uniqueness for CR Yamabe flow. The same phenomenon occurred in
the theory of Ricci solitons (cf. \cite{cln}).

In this article, we focus on CR Yamabe solitons instead of their corresponding self-similar solutions. 

To prove Theorem 1.1, we shall need a certain differential Harnack quantity for the three-dimensional CR Yamabe flow.
We remark that in the Ricci flow, Hamilton found a conserved quantity which vanishes identically for expanding Ricci
solitons and showed that such a quantity is nonnegative for generic solutions with
positive curvature operator. This quantity is called the differential Harnack quantity, or Li-Yau-Hamilton 
(LYH) quantity. In the paper \cite{cc}, J.-H. Cheng and the second named author have indicated a similar Harnack quantity
for the CR Yamabe flow (\ref{1b}). For the reader's convenience, we give a detailed proof here.

\begin{lemma} A three-dimensional CR Yamabe soliton satisfies: 
\begin{equation}
4\Delta _{b}W+2W(W-\mu )-W_{0}f-\langle \nabla _{b}W,J(\nabla _{b}f)\rangle_{\theta }=0,  \label{3}
\end{equation}%
\end{lemma}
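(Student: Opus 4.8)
The plan is to read the identity \eqref{3} off the correspondence, set up just above, between CR Yamabe solitons and self-similar solutions of the CR Yamabe flow \eqref{1b}, combined with the evolution equation \eqref{10110} of Lemma~\ref{lem1}. In other words, \eqref{3} is nothing but the statement ``$\partial_{t}W=2\mu W+X(W)$'' for the associated self-similar solution, rewritten by means of the flow equation $\partial_{t}W=4\Delta_{b}W+2W^{2}$.

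Concretely, given the soliton $(M^{3},J,\mathring{\theta},f,\mu)$, form the self-similar solution $\theta_{t}=\rho(t)\,\Phi_{t}^{\ast}\mathring{\theta}$ of \eqref{1b} with $\rho(t)=1-2\mu t$ and $\Phi_{t}$ the flow of the contact vector field $X_{-f}$ of Lemma~\ref{l41}, so that $\Phi_{0}=\mathrm{id}_{M}$ and $\partial_{t}\Phi_{t}|_{t=0}=X_{-f}=-if_{1}Z_{\bar{1}}+if_{\bar{1}}Z_{1}+f\mathbf{T}$; the sign $-f$ rather than $f$ is forced by the computation of $L_{X_{-f}}\theta$ above together with $f_{0}=2(\mu-W)$ from \eqref{1}. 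Under a constant rescaling $\widehat{\theta}=c\theta$ the Tanaka-Webster scalar curvature scales by $\widehat{W}=c^{-1}W$, and $W$ is natural under diffeomorphisms, so $W_{\theta_{t}}=\rho(t)^{-1}\,(\mathring{W}\circ\Phi_{t})$. Differentiating at $t=0$ and using $\rho(0)=1$, $\rho'(0)=-2\mu$ gives
\[
\left.\tfrac{\partial}{\partial t}W_{\theta_{t}}\right|_{t=0}=2\mu\,\mathring{W}+X_{-f}(\mathring{W})=2\mu\,\mathring{W}+f\,\mathring{W}_{0}+i\bigl(f_{\bar{1}}\,\mathring{W}_{1}-f_{1}\,\mathring{W}_{\bar{1}}\bigr).
\]
On the other hand $\theta_{t}$ solves \eqref{1b} on the closed manifold $M$, so by \eqref{10110} one also has $\partial_{t}W_{\theta_{t}}|_{t=0}=4\Delta_{b}\mathring{W}+2\mathring{W}^{2}$, all quantities with respect to $\mathring{\theta}$. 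Equating the two, using $2\mathring{W}^{2}-2\mu\mathring{W}=2\mathring{W}(\mathring{W}-\mu)$ and the one-line identity $\langle\nabla_{b}W,J(\nabla_{b}f)\rangle_{\theta}=i(f_{\bar{1}}W_{1}-f_{1}W_{\bar{1}})$ (which follows from $\nabla_{b}g=g_{\bar{1}}Z_{1}+g_{1}Z_{\bar{1}}$, $JZ_{1}=iZ_{1}$, $JZ_{\bar{1}}=-iZ_{\bar{1}}$ and $h_{1\bar{1}}=1$), we arrive at precisely \eqref{3}.

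The computation itself is elementary; the one point that needs care is the bookkeeping of signs and normalizations --- the weight $-1$ with which $W$ scales under a constant conformal factor, the choice of $X_{-f}$ (not $X_{f}$) as generator, and the sign convention in $\langle\,\cdot\,,\,\cdot\,\rangle_{\theta}$ --- since a slip in any one of them flips a term in \eqref{3}. If one prefers an argument internal to the soliton equations, one can instead differentiate \eqref{1} and \eqref{1a} directly: applying the commutation relations \eqref{2010a} twice expresses $4\Delta_{b}W$ through $(\Delta_{b}f)_{0}$ plus torsion terms, the terms $A_{\bar{1}\bar{1}}f_{11}+A_{11}f_{\bar{1}\bar{1}}$ cancelling as soon as \eqref{1a} is used; eliminating the leftover $(\Delta_{b}f)_{0}$ then costs one further differentiation of \eqref{1a}. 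This is self-contained but noticeably longer, which is why I would present the self-similar argument.
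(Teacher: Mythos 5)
Your argument is correct in its conclusion and in all the sign bookkeeping (the generator must indeed be $X_{-f}$, and your formula $\langle \nabla_b W, J(\nabla_b f)\rangle_\theta = i(f_{\bar 1}W_1 - f_1 W_{\bar 1})$ matches the convention used in \eqref{3a}), but it takes a genuinely different route from the paper. The paper proves \eqref{3} by a direct, purely local tensor computation internal to the soliton equations: differentiating $W+\tfrac12 f_0=\mu$ twice, reducing the fourth derivatives $f_{1\bar 1 1\bar 1}$ and $f_{\bar 1 1 1\bar 1}$ via the commutation relations \eqref{2010a} and the CR vector field condition $f_{11}+iA_{11}f=0$, and finally invoking the CR Bianchi identity $A_{11,\bar 1\bar 1}+A_{\bar 1\bar 1,11}=W_0$ to produce the $W_0 f$ term. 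You instead read \eqref{3} off as the compatibility condition between the two ways of computing $\partial_t W|_{t=0}$ for the associated self-similar solution: once from the explicit form $W_{\theta_t}=\rho(t)^{-1}(\mathring W\circ\Phi_t)$ and once from the evolution equation \eqref{10110}. Your route is conceptually cleaner and explains where the Harnack quantity comes from, but it outsources the entire computational content to Lemma~\ref{lem1}, whose derivation in \cite{cc} is of comparable length to the paper's direct proof; the paper explicitly chooses the self-contained computation "for the reader's convenience." Two points in your write-up deserve to be made explicit. First, the naturality step $W_{J,\Phi_t^\ast\mathring\theta}=\mathring W\circ\Phi_t$ is valid only because $\Phi_t^\ast J=J$, i.e.\ $L_{X}J=0$; this is precisely where the second soliton equation \eqref{1a} enters your argument (in the paper's proof it is used explicitly to kill $f_{11}$), and without it the torsion terms would not disappear. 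Second, since \eqref{10110} is a first-order variation formula, you only need $\partial_t\theta_t|_{t=0}=-2W\mathring\theta$ (which is exactly the soliton equation, via $L_{X_{-f}}\theta=f_0\theta$ and $f_0=2(\mu-W)$), not the full self-similar solution on a time interval --- it is worth saying this, both because it spares you from verifying that the soliton identity propagates in $t$ and because it frees the argument from the compactness implicitly invoked when you cite the flow "on the closed manifold $M$" (the lemma, like the paper's proof, is purely local and does not assume $M$ closed).
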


\begin{proof}
Recall that $\Delta _{b}W=W_{1\bar{1}}+W_{\bar{1}1}$. We first
differentiate the soliton equation (\ref{1}) and obtain
\begin{equation*}
W_{1\bar{1}}=-\frac{1}{2}f_{01\bar{1}}=\frac{i}{2}(f_{1\bar{1}}-f_{\bar{1}%
1})_{1\bar{1}}.
\end{equation*}%
The two terms appear on the right hand side are higher derivatives of $f$ 
which can be reduced by using the CR vector field condition%
\begin{equation*}
f_{11}+iA_{11}f=0.
\end{equation*}
Indeed, by using commutation relations (\ref{2010a}), one derives%
\begin{equation*}
\begin{array}{ccl}
f_{1\bar{1}1} & = & f_{11\bar{1}}-if_{10}-f_{1}W \\ 
& = & -i(A_{11}f)_{\bar{1}}-if_{01}+iA_{11}f_{\bar{1}}-f_{1}W \\ 
& = & -iA_{11,\bar{1}}f+2iW_{1}-f_{1}W.%
\end{array}%
\end{equation*}%
Differentiate it in the direction $Z_{\bar{1}}$, one achieves 
\begin{equation*}
f_{1\bar{1}1\bar{1}}=-iA_{11,\bar{1}\bar{1}}f-iA_{11,\bar{1}}f_{\bar{1}%
}+2iW_{1\bar{1}}-f_{1\bar{1}}W-f_{1}W_{\bar{1}}.
\end{equation*}%
On the other hand, the differentiation of its conjugation in the $Z_{1}$
direction gives an expression of $f_{\bar{1}1\bar{1}1}$. After changing the $%
3$rd and $4$th indices, one obtains 
\begin{equation*}
\begin{array}{ccl}
f_{\bar{1}11\bar{1}}
&=& f_{\bar{1}1\bar{1}1}+if_{\bar{1}10} \\
&=& iA_{\bar{1}\bar{1},11}f+iA_{\bar{1}\bar{1},1}f_{1}
    -2iW_{\bar{1}1}-f_{\bar{1}1}W-f_{\bar{1}}W_{1}+if_{\bar{1}10},
\end{array}
\end{equation*}
where we can further reduce the bad term $f_{\bar{1}10}$ by noticing that
\begin{equation*}
\begin{array}{ccl}
f_{\bar{1}10} & = & f_{0\bar{1}1}-A_{11}f_{\bar{1}\bar{1}}-A_{11,\bar{1}}f_{%
\bar{1}}-A_{\bar{1}\bar{1}}f_{11}-A_{\bar{1}\bar{1},1}f_{1} \\ 
& = & -2W_{\bar{1}1}-A_{11}f_{\bar{1}\bar{1}}-A_{11,\bar{1}}f_{\bar{1}}-A_{%
\bar{1}\bar{1}}f_{11}-A_{\bar{1}\bar{1},1}f_{1} \\ 
& = & -2W_{\bar{1}1}-A_{11,\bar{1}}f_{\bar{1}}-A_{\bar{1}\bar{1},1}f_{1}.%
\end{array}%
\end{equation*}%

Substituting our expressions for $f_{1\bar{1}1\bar{1}}$ and $f_{\bar{1}11%
\bar{1}}$ into the equation for $W_{1\bar{1}}$, we get%
\begin{equation*}
\begin{array}{ccl}
2W_{1\bar{1}} & = & A_{11,\bar{1}\bar{1}}f+A_{11,\bar{1}}f_{\bar{1}}-2W_{1%
\bar{1}}-if_{1\bar{1}}W-if_{1}W_{\bar{1}} \\ 
&  & +A_{\bar{1}\bar{1},11}f+A_{\bar{1}\bar{1},1}f_{1}-2W_{\bar{1}1}+if_{%
\bar{1}1}W+if_{\bar{1}}W_{1} \\ 
&  & -2W_{\bar{1}1}-A_{11,\bar{1}}f_{\bar{1}}-A_{\bar{1}\bar{1},1}f_{1}.%
\end{array}%
\end{equation*}%

Finally, by the CR Bianchi identity $A_{11,\bar{1}\bar{1}}+A_{\bar{1}\bar{1}%
,11}=W_{0} $, we have the Harnack quantity%
\begin{equation}
\begin{array}{ccl}
4\Delta _{b}W & = & (A_{11,\bar{1}\bar{1}}+A_{\bar{1}\bar{1},11})f-i(f_{1%
\bar{1}}-f_{\bar{1}1})W-i(f_{1}W_{\bar{1}}-f_{\bar{1}}W_{1}) \\ 
& = & W_{0}f-i(if_{0})W-i(f_{1}W_{\bar{1}}-f_{\bar{1}}W_{1}) \\ 
& = & -2W(W-\mu )+W_{0}f+\langle \nabla _{b}W,J(\nabla _{b}f)\rangle
_{\theta }.%
\end{array}
\label{3a}
\end{equation}%
This finishes the proof of (\ref{3}). 

\end{proof}

Now by using the Harnack quantity (\ref{3}), we can show that every closed
CR Yamabe soliton has constant Tanaka-Webster curvature. This computation is similar to the one given by S.-Y. Hsu \cite{hsu} for closed Riemannian 
Yamabe solitons.  \\

\textbf{The Proof of Theorem \ref{A}:}

\begin{proof}
Integrating the Harnack quantity (\ref{3}), 
one derives that 
\begin{equation*}
\begin{array}{ccl}
0 & = & \int_{M}[-2W(W-\mu )+W_{0}f-i(f_{1}W_{\bar{1}}-f_{\bar{1}}W_{1})]d\mu
\\ 
& = & -2\int_{M}W(W-\mu )d\mu -\int_{M}Wf_{0}d\mu +\int_{M}i(f_{1\bar{1}}-f_{%
\bar{1}1})Wd\mu \\ 
& = & -2\int_{M}W(W-\mu )d\mu -2\int_{M}Wf_{0}d\mu \\ 
& = & -2\int_{M}W(W-\mu )d\mu +4\int_{M}W(W-\mu )d\mu \\ 
& = & 2\int_{M}W(W-\mu )d\mu .%
\end{array}%
\end{equation*}%
Together with the fact that $\int_{M}(W-\mu )d\mu =-\frac{1}{2}\int_{M}f_{0}d\mu 
=\frac{1}{2}\int_{M}f \cdot \mathrm{div}\mathbf{T}\ d\mu =0$, we obtain 
\begin{equation*}
\int_{M}(W-\mu )^{2}d\mu =\int_{M}W(W-\mu )d\mu -\mu \int_{M}(W-\mu )d\mu =0.
\end{equation*}
\end{proof}

Next, to prepare the proof of Theorem 1.2, we first recall the definitions of CR Paneitz operator and the CR $Q$%
-curvature:

\begin{definition}
Let $(M,J,\theta)$ be a closed $3$-dimensional pseudohermitian manifold.
We define, as in \cite{l1}, that  
\begin{equation}
P\varphi: =(P_{1}\varphi )\theta ^{1} :=(\varphi _{\bar{1}}{}^{\bar{1}}{}_{1}+iA_{11}\varphi ^{1})\theta
^{1}  \label{4}
\end{equation}
and 
\begin{equation*}
\overline{P}\varphi :=(\overline{P}_{1})\theta ^{\bar{1}}.
\end{equation*}
The operator $P$ characterizes CR-pluriharmonic functions. The CR Paneitz operator $P_{0}$ is defined by 
\begin{equation}
P_{0}\varphi =\left( \delta _{b}(P\varphi )+\overline{\delta }_{b}(\overline{%
P}\varphi )\right),  \label{4a}
\end{equation}%
where $\delta _{b}$ is the divergence operator that takes $(1,0)$-forms to
functions by $\delta _{b}(\sigma _{1}\theta ^{1})=\sigma _{1,}{}^{1}$, and
similarly, $\bar{\delta}_{b}(\sigma _{\bar{1}}\theta ^{\bar{1}})=\sigma _{%
\bar{1},}{}^{\bar{1}}$. \ One can check that $P_{0}$ is self-adjoint, that
is, $\left \langle P_{0}\varphi ,\psi \right \rangle =\left \langle \varphi
,P_{0}\psi \right \rangle $ for all smooth functions $\varphi $ and $\psi $.
For the details about these operators, the reader can consult 
\cite{gl}, \cite{hi}, \cite{l1}, and \cite{cchi}.
\end{definition}

\begin{definition}
(\cite{hi, ccc}) Define the tensor 
\begin{equation*}
R_{1}\theta ^{1}:=(W_{1}-iA_{11,\bar{1}})\theta ^{1}.
\end{equation*}%
Use this to define the CR $Q$-curvature on\textbf{\ }$\left( M^{3},\text{ }%
\theta \right) $ by
\begin{equation}
Q:=-cR_{1,\bar{1}}=-\frac{c}{2}(\Delta _{b}W+2\mathrm{Im}A_{11,\bar{1%
}\bar{1}}),  \label{33b}
\end{equation}%
for some constant $c$. The last equality follows from \cite[Lemma 5.4]{hi}.
Clearly, vanishing of $R_{1}$ guarantees vanishing of the CR $Q$-curvature.
\end{definition}

\begin{lemma}
\label{l42} (\cite[Lemma 5.4]{hi}) Let $(M^{3},J,\theta )$ be a
pseudohermitian manifold. Then, for rescaled contact form $\widetilde{\theta 
}=e^{2g}\theta ,$ we have%
\begin{equation}
\widetilde{R}_{1}=e^{-3g}(R_{1}-6P_{1}g)  \label{33}
\end{equation}%
and thus 
\begin{equation}
\widetilde{R}_{1,\bar{1}}=e^{-4g}(R_{1,\bar{1}}-6C_{\theta }g),\  \
C_{\theta }g=(P_{1}g)_{\bar{1}}.  \label{33a}
\end{equation}
\end{lemma}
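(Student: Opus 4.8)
The plan is to prove the first transformation formula (\ref{33}) by a direct computation with the standard conformal change laws of three-dimensional pseudohermitian geometry, and then obtain (\ref{33a}) by differentiating (\ref{33}) and invoking the transformation rule for the covariant derivative of a weighted density. First I would fix the conformal factor and record the change formulas under $\widetilde{\theta}=e^{2g}\theta$: for a suitably normalized admissible coframe one has $\widetilde{\theta}^{1}=e^{g}(\theta^{1}+2ig^{1}\theta)$, with dual frame $\widetilde{Z}_{1}=e^{-g}(Z_{1}-2ig_{1}\mathbf{T})$ and $\widetilde{\mathbf{T}}$ adjusted accordingly, the connection $1$-form $\widetilde{\theta}_{1}{}^{1}=\theta_{1}{}^{1}+3(g_{1}\theta^{1}-g_{\bar{1}}\theta^{\bar{1}})+c\,\theta$ for an appropriate function $c$, the torsion $\widetilde{A}_{11}=e^{-2g}(A_{11}+2ig_{11}-4i(g_{1})^{2})$, and the Tanaka--Webster scalar curvature $\widetilde{W}=e^{-2g}(W+(\text{second-order terms in }g))$; all the constants here would be read off from the structure equations (\ref{100240})--(\ref{100241}).

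Next I would compute $\widetilde{W}_{1}=\widetilde{Z}_{1}\widetilde{W}$ and $\widetilde{A}_{11,\bar{1}}=\widetilde{Z}_{\bar{1}}\widetilde{A}_{11}-2\widetilde{\theta}_{1}{}^{1}(\widetilde{Z}_{\bar{1}})\widetilde{A}_{11}$ by substituting the formulas above, so that each becomes $e^{-3g}$ times a sum of $W_{1}$ (resp.\ $A_{11,\bar{1}}$) and terms built from covariant derivatives of $g$, the torsion and $W$. Forming $\widetilde{R}_{1}=\widetilde{W}_{1}-i\widetilde{A}_{11,\bar{1}}$, the leading terms assemble to $e^{-3g}R_{1}$, and the remaining task is to check that the $g$-dependent remainder collects to $-6e^{-3g}(g_{\bar{1}}{}^{\bar{1}}{}_{1}+iA_{11}g^{1})=-6e^{-3g}P_{1}g$. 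This reduction uses the commutation relations (\ref{2010a}) to bring the mixed third derivatives of $g$ into the standard order $g_{\bar{1}}{}^{\bar{1}}{}_{1}$, and the CR Bianchi identities to absorb stray curvature and torsion terms; the summand $iA_{11}g^{1}$ appears precisely from the $g$-quadratic part of $\widetilde{A}_{11}$ meeting the factor $-i$ in $R_{1}$.

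For (\ref{33a}) I would use that, by (\ref{33}), $R_{1}$ transforms as a section of the density bundle of weight $-3$, so that $\widetilde{R}_{1,\bar{1}}=\widetilde{Z}_{\bar{1}}\widetilde{R}_{1}-3\widetilde{\theta}_{1}{}^{1}(\widetilde{Z}_{\bar{1}})\widetilde{R}_{1}$. Substituting $\widetilde{R}_{1}=e^{-3g}(R_{1}-6P_{1}g)$ and simplifying as before, the $R_{1}$ part produces $e^{-4g}R_{1,\bar{1}}$ while the $P_{1}g$ part produces $-6e^{-4g}(P_{1}g)_{\bar{1}}=-6e^{-4g}C_{\theta}g$; the key point is that no further terms survive, which is exactly the conformal covariance of the CR Paneitz operator $P_{0}$ (equivalently, that $R_{1,\bar{1}}$ is, up to a constant, the CR $Q$-curvature and changes by the linear second-order operator $6C_{\theta}$).

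The main obstacle is the middle step: the bookkeeping of the numerous lower-order terms produced by $\widetilde{W}_{1}$ and $\widetilde{A}_{11,\bar{1}}$, and verifying that they conspire --- via the commutation relations (\ref{2010a}) and the structure equations --- to yield exactly $-6P_{1}g$ with the correct constant. Pinning down the coefficient $6$ (essentially $2\times 3$, the $3$ coming from the $n=1$ transformation of $\theta_{1}{}^{1}$) is the delicate part and requires using the dimension-three specific transformation constants for $W$, $A_{11}$ and $\theta_{1}{}^{1}$ consistently throughout. Since the identity is classical --- it is Lemma 5.4 of \cite{hi} --- one could alternatively just quote it, but the computation sketched above is the route I would follow to reprove it.
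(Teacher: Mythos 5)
The paper gives no proof of this lemma at all: it is imported verbatim from Hirachi \cite[Lemma 5.4]{hi}, so the ``paper's route'' is simply a citation, and your closing remark that one could just quote the source is in fact exactly what the authors do. Your computational outline is the standard way the identity is actually established (fix the conformal change laws for $\widetilde{\theta}^{1}$, $\widetilde{\theta}_{1}{}^{1}$, $\widetilde{A}_{11}$, $\widetilde{W}$, assemble $\widetilde{R}_{1}=\widetilde{W}_{1}-i\widetilde{A}_{11,\bar{1}}$, and reduce the remainder to $-6P_{1}g$ via the commutation relations (\ref{2010a})), so as a plan it is sound; but two points deserve care. First, the formula you write for the second step, $\widetilde{R}_{1,\bar{1}}=\widetilde{Z}_{\bar{1}}\widetilde{R}_{1}-3\,\widetilde{\theta}_{1}{}^{1}(\widetilde{Z}_{\bar{1}})\widetilde{R}_{1}$, is not correct as a covariant derivative: $R_{1}$ is the component of a $(1,0)$-form, so its covariant derivative carries a single connection term $-\widetilde{\theta}_{1}{}^{1}(\widetilde{Z}_{\bar{1}})\widetilde{R}_{1}$; the ``weight $-3$'' is carried entirely by the conformal factor $e^{-3g}$ and by the change of connection, not by tripling the connection term. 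Second, justifying that ``no further terms survive'' in (\ref{33a}) by appealing to the conformal covariance of $P_{0}$ is borderline circular, since the pair (\ref{33})--(\ref{33a}) is precisely one way that covariance is encoded; you may legitimately invoke the Graham--Lee/Hirachi covariance of the Paneitz operator as an independently known fact, but then the honest conclusion is that you are quoting the literature rather than reproving it. Finally, the coefficient $6$ and the cancellation of all lower-order terms --- which you correctly identify as the delicate part --- are left unverified in your sketch, so what you have is a correct strategy and a correct statement of where the work lies, not a completed proof; given that the paper itself only cites \cite{hi}, this is an acceptable level of detail, but it should be presented as such.
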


\begin{corollary}
\label{c42} Let $(M^{3},J,\theta )$ be a pseudohermitian manifold. 
If $\widetilde{\theta }=e^{2g}\theta $ has vanishing CR 
$\widetilde{Q}$-curvature for some function $g$, then
\begin{equation*}
\Delta _{b}W+2\mathrm{Im}A_{11,\bar{1}\bar{1}}=12C_{\theta }g.
\end{equation*}
\end{corollary}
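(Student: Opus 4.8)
The plan is to derive Corollary \ref{c42} directly from the conformal transformation law for $R_1$ recorded in Lemma \ref{l42} together with the definition of the CR $Q$-curvature in (\ref{33b}). Since the normalization constant $c$ appearing in (\ref{33b}) is nonzero, the hypothesis $\widetilde{Q}\equiv 0$ on $(M^{3},\widetilde{\theta})$ is equivalent to $\widetilde{R}_{1,\bar{1}}\equiv 0$. The first step, then, is simply to record this equivalence.

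Next I would invoke the second identity of Lemma \ref{l42}, equation (\ref{33a}), which gives $\widetilde{R}_{1,\bar{1}}=e^{-4g}(R_{1,\bar{1}}-6C_{\theta}g)$ with $C_{\theta}g=(P_{1}g)_{\bar{1}}$. Because $e^{-4g}>0$ everywhere, the vanishing of $\widetilde{R}_{1,\bar{1}}$ established in the first step forces $R_{1,\bar{1}}=6C_{\theta}g$ pointwise on $M$.

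Finally, I would translate $R_{1,\bar{1}}$ back into scalar-curvature and torsion data using the second equality in (\ref{33b}), which (following \cite[Lemma 5.4]{hi}) amounts to $R_{1,\bar{1}}=\tfrac{1}{2}\bigl(\Delta_{b}W+2\,\mathrm{Im}\,A_{11,\bar{1}\bar{1}}\bigr)$. Substituting this into $R_{1,\bar{1}}=6C_{\theta}g$ and multiplying by $2$ yields $\Delta_{b}W+2\,\mathrm{Im}\,A_{11,\bar{1}\bar{1}}=12C_{\theta}g$, as claimed. There is no genuine obstacle here: the result is an immediate corollary of Lemma \ref{l42}, and the only points demanding (minor) care are keeping the constant $c$ out of the final formula and correctly accounting for the factor of $2$ relating $R_{1,\bar{1}}$ to $\Delta_{b}W+2\,\mathrm{Im}\,A_{11,\bar{1}\bar{1}}$.
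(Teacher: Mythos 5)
Your proposal is correct and follows exactly the same route as the paper: read off $\widetilde{R}_{1,\bar{1}}=0$ from the vanishing of $\widetilde{Q}$, apply the transformation law (\ref{33a}) to get $R_{1,\bar{1}}=6C_{\theta}g$, and convert via $R_{1,\bar{1}}=\tfrac{1}{2}(\Delta_{b}W+2\,\mathrm{Im}\,A_{11,\bar{1}\bar{1}})$ from (\ref{33b}). Nothing is missing.
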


\begin{proof}
The result follows from (\ref{33a}) and (\ref{33b}) easily as 
\begin{equation*}
0=R_{1,\bar{1}}-6C_{\theta }g=\frac{1}{2}(\Delta _{b}W+2\mathrm{Im}A_{11,%
\bar{1}\bar{1}})-6C_{\theta }g.
\end{equation*}
\end{proof}

\textbf{The Proof of Theorem \ref{B} : }

\begin{proof}
It follows from the CR vector field condition (\ref{1a}) that 
\begin{equation*}
0=\int_{M}(A_{\bar{1}\bar{1}}f_{11}+i|A_{11}|^{2}f)d\mu
=\int_{M}(A_{\bar{1}\bar{1},11}f)d\mu +i\int_{M}|A_{11}|^{2}fd\mu
\end{equation*}%
and 
\begin{equation*}
0=\int_{M}(A_{11}f_{\bar{1}\bar{1}}-i|A_{11}|^{2}f)d\mu
=\int_{M}(A_{11,\bar{1}\bar{1}}f)d\mu -i\int_{M}|A_{11}|^{2}fd\mu. 
\end{equation*}%
Thus, 
\begin{equation}
0=\int_{M}(\mathrm{Im}A_{11,\bar{1}\bar{1}}f)d\mu
-\int_{M}|A_{11}|^{2}fd\mu .  \label{2014}
\end{equation}%
Now since the closed CR Yamabe soliton has constant Tanaka-Webster scalar
curvature, from Corollary \ref{c42} we have 
\begin{equation*}
\mathrm{Im}A_{11,\bar{1}\bar{1}}=6C_{\theta }g.
\end{equation*}%
This together with (\ref{2014}) implies that %
\begin{equation*}
\int_{M}|A_{11}|^{2}fd\mu =\int_{M}(\mathrm{Im}A_{11,\bar{1}\bar{1}%
}f)d\mu =6\int_{M}(C_{\theta }g)fd\mu =3\int_{M}(P_{0}g)fd\mu
=3\int_{M}(P_{0}f)gd\mu =0.
\end{equation*}%
Here $P_{0}g=C_{\theta }g+\overline{C_{\theta }g}=2C_{\theta }g.$ 
Since $f$ vanishes nowhere, we obtain 
\begin{equation*}
A_{11}=0.
\end{equation*}
\end{proof}

\textbf{The Proof of Corollary \ref{C1} :}

\begin{proof}
In the paper \cite{caoc}, J. Cao and the second named author proved that
if $(M,J,\theta )$ is the smooth boundary of a bounded strictly pseudoconvex
domain $\Omega $ in a complete Stein manifold $V^{2}$, then there exists a
smooth function $g$ such that $\widetilde{\theta }=e^{2g}\theta $ has
vanishing CR $\widetilde{Q}$-curvature. 
Hence, Corollary \ref{C1} follows from Theorem \ref{B}.
\end{proof}

\section{Complete $3$-dimensional Pseudo-Gradient CR Yamabe Solitons}

We first recall that the family of Webster adapted metrics $h^{\lambda }$ of 
$(M,J,\theta )$ is given as the following: 
\begin{equation*}
h^{\lambda }=\frac{1}{2}h+\lambda ^{-2}\theta ^{2},\  \lambda >0, 
\end{equation*}%
with 
\begin{equation*}
h(X,Y)=d\theta (X,JY).
\end{equation*}%

In this section, we discuss basic properties of complete CR 
Yamabe pseudo-gradient solitons, which are analogue to
gradient Ricci solitons. Using these properties, 
we obtain the structure theorem for nontrivial complete $3$-dimensional 
pseudo-gradient CR Yamabe solitons in the next section.

\begin{definition}
A complete pseudohermitian $3$-manifold $(M^{3},J,$ $\theta )$ 
is called a pseudo-gradient 
CR Yamabe soliton if there exist a smooth function $\varphi $ and $\mu\in {\mathbb R}$ such that
\begin{equation}
W+\frac{1}{2}\Delta _{b}\varphi =\mu,  \label{2}
\end{equation}%
with 
\begin{equation}
\varphi _{11}=0\  \  \  \  \mathrm{and}\  \  \  \  \varphi _{0}=0.  \label{2b}
\end{equation}
$(M^{3},J,$ $\theta )$ is said to be {\it trivial} if $\varphi$ is a constant function.
\end{definition}

\begin{remark}
(i) By Theorem \ref{A}, it is easy to see that a closed CR Yamabe soliton is always a trivial pseudo-gradient soliton. 
However, given a non-compact CR Yamabe soliton $(M,J,\theta,f,\mu)$, there
might not even exist a function $\varphi$ such that $\Delta _{b}\varphi =f_{0}$.

(ii) The constraint $\varphi_{0}=0$ arises naturally when 
one requires the following equivalent expression of (\ref{2}) to hold (cf. the proof of Lemma 4.1 (i)):
\begin{equation*}
R_{1\bar{1}}+\varphi _{1\bar{1}}=\mu h_{1\bar{1}}.
\end{equation*}%
Here the positive function $h_{1\bar{1}}$ is the Levi metric with respect
to the Levi form $d\theta =ih_{1\bar{1}}\theta ^{1}\wedge \theta ^{\bar{1}}$
and $R_{1\bar{1}}=Wh_{1\bar{1}}.$
This expression is analogous to the gradient K\"ahler-Ricci soliton where the constraint
$\varphi_{11}=0$ is parallel to the infinitesimal automorphism condition in that case 
(cf. page 93 in \cite{ccgg+}). 
This justifies the term ``pseudo-gradient" in our definition in consideration 
of the theory of gradient Ricci soliton.

(iii) Gradient Ricci solitons can be viewed as generalized Einstein manifolds in 
the weighted geometry sense and analogue theory was built for pseudo-gradient CR 
Yamabe solitons in weighted pseudohermitian geometry \cite{ckl}.
In particular, a pseudo-gradient CR Yamabe soliton has constant Bakry-\'Emery 
pseudohermitian Ricci curvature
$$Ric(\Delta _{\varphi })(X,X):=R_{1\bar{1}}X^{1}X^{\bar{1}}+\mathrm{Re}[\varphi _{1\bar{1}}X^{1}X^{\bar{1}}]=\mu |X|^{2}$$
and the Bakry-\'Emery pseudohermitian torsion of it is the same as the pseudohermitian torsion: 
$$Tor(\Delta _{\varphi })(X,X):=2\mathrm{Re}[(iA_{\bar{1}\bar{1}}+\varphi _{\bar{1}\bar{1}})X_{1}X_{1}]=Tor(X,X),$$
where $X=X^{1}Z_{1}+X^{\bar{1}}Z_{\bar{1}}\in T_{1,0}\oplus T_{0,1}$. 

\end{remark}

Similar to the case of complete gradient Ricci solitons, we have

\begin{lemma}
\label{l51} Let  $\left( M^{3},J, \theta , \varphi ,\mu \right)$ be a  
complete three-dimensional pseudo-gradient CR Yamabe soliton of vanishing torsion, then

(i) 
\begin{equation*}
W+\frac{1}{2}|\nabla _{b}\varphi |^{2}-\mu \varphi =C
\end{equation*}%
for some constant $C$;

(ii) 
\begin{equation*}
\nabla _{b}(We^{-\varphi })=0.
\end{equation*}
\end{lemma}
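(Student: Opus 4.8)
The plan is to derive both identities from the soliton equations $W+\frac{1}{2}\Delta_b\varphi=\mu$, $\varphi_{11}=0$, $\varphi_0=0$ together with the vanishing torsion hypothesis $A_{11}=0$, mimicking the standard derivation of the conserved quantity for gradient Ricci and Yamabe solitons. First I would establish the identity $W_1+\varphi_{1}\,?$-type relation: differentiate $W+\frac{1}{2}\Delta_b\varphi=\mu$ in the $Z_1$ direction to get $W_1=-\frac{1}{2}(\Delta_b\varphi)_1=-\frac{1}{2}(\varphi_{1\bar1}+\varphi_{\bar11})_1$. Now using $\varphi_{11}=0$ and $A_{11}=0$, the commutation relations (\ref{2010a}) reduce the third-order terms: since $\varphi_{11}=0$ one has $\varphi_{11\bar1}=0$, and the commutation relation gives $\varphi_{1\bar11}=\varphi_{11\bar1}-i\varphi_{10}-\varphi_1 W=-\varphi_1 W$ (using $\varphi_0=0$, hence $\varphi_{10}=0$ after another commutation with $A_{11}=0$). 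Similarly $\varphi_{\bar11\bar1}$ and $\varphi_{\bar1 1 1}$ are controlled; one finds $(\varphi_{\bar11})_1=\varphi_{\bar111}$ which by commuting equals $\varphi_{1\bar11}+i\varphi_{\bar110}=-\varphi_1 W$ again. Combining, $W_1=-\frac{1}{2}(-2\varphi_1 W)=\varphi_1 W$, i.e.
\begin{equation*}
W_1=W\varphi_1.
\end{equation*}
This is the CR analogue of $\nabla R=R\nabla\varphi$ for Yamabe solitons with $\varphi_0=0$; it is essentially part (ii), since $\nabla_b(We^{-\varphi})=(W_1-W\varphi_1)e^{-\varphi}Z_{\bar1}+(W_{\bar1}-W\varphi_{\bar1})e^{-\varphi}Z_1=0$.

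Next, for part (i), I would compute $Z_1$ of the candidate function $F:=W+\frac{1}{2}|\nabla_b\varphi|^2-\mu\varphi$. Using $|\nabla_b\varphi|^2=2\varphi_1\varphi_{\bar1}$ (with the normalization $h_{1\bar1}=1$), we get $F_1=W_1+(\varphi_{11}\varphi_{\bar1}+\varphi_1\varphi_{\bar11})-\mu\varphi_1=W_1+\varphi_1\varphi_{\bar11}-\mu\varphi_1$, where $\varphi_{11}=0$ was used. Now $\varphi_{\bar11}=\Delta_b\varphi-\varphi_{1\bar1}$; more usefully, from the commutation relation $\varphi_{1\bar1}-\varphi_{\bar11}=i\varphi_0+W\varphi\cdot 0$ wait — the scalar commutation gives $\varphi_{1\bar1}-\varphi_{\bar11}=i\varphi_0=0$, so $\varphi_{1\bar1}=\varphi_{\bar11}=\frac{1}{2}\Delta_b\varphi=\mu-W$. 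Therefore $F_1=W_1+\varphi_1(\mu-W)-\mu\varphi_1=W_1-W\varphi_1=0$ by the identity just proved. By conjugation $F_{\bar1}=0$, so $\nabla_b F=0$; and $F_0=W_0+\varphi_1\varphi_{\bar10}+\varphi_{\bar1}\varphi_{10}-\mu\varphi_0$, where $\varphi_0=0$ and $\varphi_{10},\varphi_{\bar10}$ vanish via the commutation relation (\ref{2010a}) with $A_{11}=0$ applied to $\varphi_{01}-\varphi_{10}=\varphi_{\bar1}A_{11}=0$, while $W_0$ needs handling — here I would either use the evolution-type identity for $W_0$ or argue directly that differentiating $W+\frac12\Delta_b\varphi=\mu$ along $\mathbf{T}$ plus $\varphi_0=0$ forces $W_0+\frac12(\Delta_b\varphi)_0=0$ and $(\Delta_b\varphi)_0=\Delta_b\varphi_0+(\text{torsion terms})=0$. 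Thus $F$ is constant on $M$.

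The main obstacle I anticipate is the careful bookkeeping of the third-order commutator terms $\varphi_{1\bar11}$, $\varphi_{\bar111}$, etc., and in particular verifying that all the "bad" terms involving $\varphi_0$, $\varphi_{10}$, $A_{11}$ and its derivatives genuinely drop out — this requires systematically invoking $\varphi_{11}=0$, $\varphi_0=0$, and $A_{11}=0$ (so also $A_{11,1}=A_{11,\bar1}=0$, $W_0=0$ by the Bianchi identity $A_{11,\bar1\bar1}+A_{\bar1\bar1,11}=W_0$) at each stage rather than leaving them implicit. Once $W_1=W\varphi_1$ is in hand, both conclusions follow cleanly; the identity (ii) is immediate, and (i) reduces to the one-line computation $F_1=W_1-W\varphi_1=0$ above together with the verification that $F_{\bar1}=F_0=0$, which is why I would prove (i) using (ii) rather than independently.
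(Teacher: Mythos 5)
Your proposal is correct and follows essentially the same route as the paper: differentiating the soliton equation in the $Z_1$ direction and using the commutation relations together with $\varphi_{11}=0$, $\varphi_0=0$ to obtain $W_1=-iA_{11}\varphi_{\bar 1}+W\varphi_1$ (hence $W_1=W\varphi_1$ when the torsion vanishes), from which (ii) is immediate and (i) follows by checking that $Z_1$ annihilates $W+\frac{1}{2}|\nabla_b\varphi|^2-\mu\varphi$. Your extra verification that the quantity is also constant along $\mathbf{T}$ is harmless but unnecessary, since $\nabla_bF=0$ already forces $F_0=0$ (e.g.\ via $F_{1\bar 1}-F_{\bar 1 1}=iF_0$), which is why the paper omits it.
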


\begin{proof}
(i) Since $0=\varphi _{0}=-i(\varphi _{1\bar{1}}-\varphi _{\bar{1}1})$, we have
\begin{equation*}
\Delta _{b}\varphi =2\varphi _{1\bar{1}}.
\end{equation*}%
It follows from (\ref{2}) that 
\begin{equation}
W_{1}+\varphi _{1\bar{1}1}=0.  \label{41}
\end{equation}%

By the commutation relation (\ref{2010a}) , $\varphi _{0}=0$ and $\varphi _{11}=0$,%
\begin{equation}
\begin{array}{ccl}
\varphi _{1\bar{1}1} & = & \varphi _{11\bar{1}}-i\varphi
_{10}-W\varphi _{1} \\ 
& = & i\varphi _{01}+iA_{11}\varphi _{\bar{1}}-W\varphi _{1} \\ 
& = & iA_{11}\varphi _{\bar{1}}-W\varphi _{1}.%
\end{array}
\label{42}
\end{equation}%
Hence, from (\ref{41}) and (\ref{42}),  we know that
\begin{equation}
W_{1}=-iA_{11}\varphi _{\bar{1}}+W\varphi _{1}.  \label{43}
\end{equation}
It follows from (\ref{43}) and $\varphi _{11}=0$ that 
\begin{equation*}
\begin{array}{ccl}
(W+\frac{1}{2}|\nabla _{b}\varphi |^{2}-\mu \varphi )_{1} & = & 
W_{1}+(\varphi _{1}\varphi _{\bar{1}})_{1}-\mu \varphi _{1} \\ 
& = & -iA_{11}\varphi _{\bar{1}}+(W-\mu )\varphi _{1}+\varphi
_{1}\varphi _{\bar{1}}{}_{1} \\ 
& = & -iA_{11}\varphi _{\bar{1}}-\varphi _{\bar{1}}{}_{1}\varphi
_{1}+\varphi _{1}\varphi _{\bar{1}}{}_{1} \\ 
& = & -iA_{11}\varphi _{\bar{1}}.%
\end{array}%
\end{equation*}%
Hence,  
\begin{equation*}
(W+\frac{1}{2}|\nabla _{b}\varphi |^{2}-\mu \varphi )_{1}=0
\end{equation*}%
and 
\begin{equation*}
\frac{1}{2}|\nabla _{b}\varphi |^{2}+W-\mu \varphi =C
\end{equation*}%
for some constant $C$ if $A_{11}=0$.

(ii) If $A_{11}=0,$ then 
\begin{equation*}
\begin{array}{ccl}
(We^{-\varphi })_{1} & = & (W_{1}-W\varphi _{1})e^{-\varphi } \\ 
& = & (-iA_{11}\varphi _{\bar{1}}+W\varphi _{1}-W\varphi
_{1})e^{-\varphi } \\ 
& = & -iA_{11}\varphi _{\bar{1}}e^{-\varphi } \\ 
& = & 0.%
\end{array}%
\end{equation*}
\end{proof}

We rewrite the real version of complete pseudo-gradient CR Yamabe soliton as 
follows:

\begin{lemma}
\label{l52} Let $(M^{3},J,\theta )$ be a complete three-dimensional pseudohermitian manifold. Then $\left( M^{3},J,\theta , \varphi , \mu \right) $ is a
complete pseudo-gradient CR Yamabe soliton if and only if the following identities hold: 

\medskip
(i) 
\begin{equation*}
\varphi _{e_{1}e_{1}}=\varphi _{e_{2}e_{2}} \quad \mbox{and} \quad 
\varphi _{e_{1}e_{2}}=\varphi _{e_{2}e_{1}}=0;
\end{equation*}

(ii) 
\begin{equation*}
W+\frac{1}{2}\varphi _{e_{1}e_{1}}=W+\frac{1}{2}\varphi _{e_{2}e_{2}}=\mu .
\end{equation*}
\end{lemma}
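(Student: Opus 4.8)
The plan is to translate the complex-tensor soliton equations \eqref{2} and \eqref{2b} into the real orthonormal frame $\{e_1,e_2,\mathbf{T}\}$ using the dictionary set up in Section 2, and then simply compare coefficients. Recall $Z_1 = \tfrac12(e_1 - ie_2)$, and the relations $\varphi_0 = -i(\varphi_{1\bar 1} - \varphi_{\bar 1 1})$, $\Delta_b\varphi = \varphi_{1\bar 1} + \varphi_{\bar 1 1} = \tfrac12(\varphi_{e_1e_1} + \varphi_{e_2e_2})$ from \eqref{2014d}, and the commutation relation $\varphi_{e_1e_2} - \varphi_{e_2e_1} = 2\varphi_0$ from \eqref{2010b}.

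\textbf{Step 1: Decode $\varphi_{11} = 0$.} Expanding $\varphi_{11}$ in terms of the real Hessian components $\varphi_{e_ie_j}$ (as defined in \eqref{2014da}), one gets $\varphi_{11} = \tfrac14\big((\varphi_{e_1e_1} - \varphi_{e_2e_2}) - i(\varphi_{e_1e_2} + \varphi_{e_2e_1})\big)$. Hence $\varphi_{11} = 0$ is equivalent to the two real equations $\varphi_{e_1e_1} = \varphi_{e_2e_2}$ and $\varphi_{e_1e_2} + \varphi_{e_2e_1} = 0$.

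\textbf{Step 2: Decode $\varphi_0 = 0$.} By the first commutation relation in \eqref{2010b}, $\varphi_{e_1e_2} - \varphi_{e_2e_1} = 2\varphi_0$, so $\varphi_0 = 0$ is equivalent to $\varphi_{e_1e_2} = \varphi_{e_2e_1}$. Combining this with $\varphi_{e_1e_2} + \varphi_{e_2e_1} = 0$ from Step 1 yields $\varphi_{e_1e_2} = \varphi_{e_2e_1} = 0$. This establishes part (i), and shows conversely that (i) implies both $\varphi_{11} = 0$ and $\varphi_0 = 0$.

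\textbf{Step 3: Decode the soliton equation \eqref{2}.} Using $\Delta_b\varphi = \tfrac12(\varphi_{e_1e_1} + \varphi_{e_2e_2})$ together with $\varphi_{e_1e_1} = \varphi_{e_2e_2}$ from (i), we get $\Delta_b\varphi = \varphi_{e_1e_1} = \varphi_{e_2e_2}$, so \eqref{2} becomes $W + \tfrac12\varphi_{e_1e_1} = W + \tfrac12\varphi_{e_2e_2} = \mu$, which is exactly (ii). All steps are reversible: given (i) and (ii) one reconstructs $\varphi_{11}=0$, $\varphi_0 = 0$, and \eqref{2}. There is no real obstacle here—the only thing requiring care is keeping the normalization $h_{1\bar 1}=1$ and the factors of $\tfrac12$ consistent with the conventions in \eqref{2014da}, \eqref{2014d}, and \eqref{2010b}, since a sign or factor error would break the equivalence. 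I would write the proof as a short two-way computation organized exactly along Steps 1--3.
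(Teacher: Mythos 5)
Your proposal is correct and follows essentially the same route as the paper's proof: the paper likewise expands $4\varphi_{11}=(\varphi_{e_1e_1}-\varphi_{e_2e_2})-i(\varphi_{e_1e_2}+\varphi_{e_2e_1})$, uses the commutation relation $\varphi_{e_1e_2}-\varphi_{e_2e_1}=2\varphi_0$ to decode $\varphi_0=0$, and then obtains (ii) from $\Delta_b\varphi=\tfrac12(\varphi_{e_1e_1}+\varphi_{e_2e_2})$. No gaps; the factor bookkeeping you flag is handled correctly.
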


\begin{proof}
(i) From the definitions of $Z_{1}=\frac{1}{2}(e_{1}-ie_{2})$ and $\varphi
_{e_{i}e_{j}}=e_{j}e_{i}\varphi -\sigma _{i}^{k}(e_{j})\varphi _{e_{k}},$ we
have 
\begin{equation*}
\varphi _{1}=\frac{1}{2}(\varphi _{e_{1}}-i\varphi _{e_{2}})
\end{equation*}%
and 
\begin{equation}
\begin{array}{ccl}
4\varphi _{11} & = & 4[Z_{1}\varphi _{1}-\theta _{1}{}^{1}(Z_{1})\varphi
_{1}] \\ 
& = & 4[Z_{1}\varphi _{1}-i\sigma _{1}^{2}(Z_{1})\varphi _{1}] \\ 
& = & (e_{1}e_{1}\varphi -\sigma _{1}^{2}(e_{1})\varphi
_{e_{2}})-(e_{2}e_{2}\varphi -\sigma _{2}^{1}(e_{2})\varphi _{e_{1}}) \\ 
&  & -i(e_{1}e_{2}\varphi -\sigma _{2}^{1}(e_{1})\varphi
_{e_{1}})-i(e_{2}e_{1}\varphi -\sigma _{1}^{2}(e_{2})\varphi _{e_{2}}) \\ 
& = & (\varphi _{e_{1}e_{1}}-\varphi _{e_{2}e_{2}})-i(\varphi
_{e_{1}e_{2}}+\varphi _{e_{2}e_{1}}).%
\end{array}
\label{2014c}
\end{equation}%

On the other hand, from (\ref{2010b}) we see that $\varphi _{0}=0$ if and only if 
\begin{equation*}
\varphi _{e_{1}e_{2}}=\varphi _{e_{2}e_{1}}.
\end{equation*}%
Thus, we conclude that $\varphi _{11}=0$ and $\varphi _{0}=0$ if and only if 
\begin{equation*}
\varphi _{e_{1}e_{1}}=\varphi _{e_{2}e_{2}} \quad \mbox{and} \quad 
\varphi _{e_{1}e_{2}}=\varphi _{e_{2}e_{1}}=0.
\end{equation*}

(ii) follows from Eq. (4.1) in the Definition 4.1, (\ref{2014d}),  and part (i) proved above. 

\end{proof}

\section{The Structure of Pseudo-Gradient  CR Yamabe Solitons}

In this section, by using ideas of the recent work of the first-named author with X. Sun and 
Y. Zhang \cite{csz}  on the structure of Yamabe gradient solitons, we obtain
the structure of pseudo-gradient CR Yamabe solitons of vanishing torsion.

Let $\{\omega^{1}:=\mathrm{Re}(\theta ^{1}),
\omega^{2}:=\mathrm{Im}(\theta ^{1}),
\omega^3:=\lambda^{-1}\theta\}$ be the orthonormal coframe dual to $\{e_{1},e_{2},e_{3}=\lambda\mathbf{T}\}$ with respect to the Webster adapted metric 
$h^{\lambda }$ defined on $(M,J,\theta )$ as in Section 4.
Then we have the following Riemannian structure equations: 

\begin{equation}
\begin{split}
d\omega ^{\alpha }& =\omega ^{\beta }\wedge \omega _{\beta }^{\alpha },\quad
1\leqslant \alpha ,\beta \leqslant 3, \\
0& =\omega _{\alpha }^{\beta }+\omega _{\beta }^{\alpha }, \\
d\omega _{\alpha }^{\beta }& =\omega _{\alpha }^{\gamma }\wedge \omega
_{\gamma }^{\beta }+\frac{1}{2}R_{\alpha \beta \gamma \delta }^{\lambda
}\omega ^{\gamma }\wedge \omega ^{\delta },\quad 1\leqslant \alpha ,\beta
,\gamma ,\delta \leqslant 3,
\end{split}
\label{strueq3}
\end{equation}%
and $\theta ^{1}=\omega ^{1}+i\omega ^{2}$ satisfies the structure
equations as in (\ref{100240}) and (\ref{100241}). Here $R_{\alpha \beta
\gamma \delta }^{\lambda }$ denotes the Riemannian curvature tensor.

Moreover, from \cite[(3.6) and (3.7)]{cchi} we have 
\begin{equation}
\theta _{1}{}^{1}=i(\omega _{1}^{2}-\lambda ^{-2}\theta )  \label{2012a}
\end{equation}%
and%
\begin{equation}
\begin{array}{ccc}
\omega _{1}^{3} & = & \left( -\lambda \mathrm{Re}A_{\bar{1}\bar{1}}\right)
\omega ^{1}+\left( -\lambda \mathrm{Im}A_{\bar{1}\bar{1}}+\lambda ^{-1}\right)
\omega ^{2}, \\ 
\omega _{2}^{3} & = & \left( -\lambda \  \mathrm{Im}\ A_{\bar{1}\bar{1}%
}-\lambda ^{-1}\right) \omega ^{1}+\left( \lambda \mathrm{Re}A_{\bar{1}\bar{1}%
}\right) \omega ^{2}.%
\end{array}
\label{2012b}
\end{equation}

Now we state the relation between the Ricci tensor $%
R_{ij}^{\lambda }$ of the Webster metric $h^{\lambda }$ and the
pseudohermitian Ricci tensor $R_{1\bar{1}}=Wh_{1\bar{1}}$. 

\begin{lemma}
\label{l53} (\cite[Theorem 3.1]{cchi}) Let $(M,J,\theta )$ be a complete
pseudohermitian $3$-manifold and $R_{\alpha \beta }^{\lambda }$ be the Ricci
curvature tensor with respect to the Webster metric $h^{\lambda }$
for some positive number $\lambda>0$. 
If the pseudohermitian torsion $A_{11}$ vanishes, then 
\begin{equation}
\  \left( R_{\alpha \beta }^{\lambda }\right) =%
\begin{pmatrix}
2W-2\lambda ^{-2} & 0 & 0 \\ 
0 & 2W-2\lambda ^{-2} & 0 \\ 
0 & 0 & 2\lambda ^{-2}%
\end{pmatrix}%
. \label{2014e}
\end{equation}%
Here $W$ is the Tanaka-Webster scalar curvature. In particular, 
the scalar curvature 
\begin{equation}
R^{\lambda }=4W-2\lambda ^{-2}.  \label{2014f}
\end{equation}
\end{lemma}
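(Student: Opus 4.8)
The statement is quoted from \cite[Theorem 3.1]{cchi}; here is the computation I would carry out. The plan is to determine the Levi--Civita connection $1$-forms $\omega_\alpha^\beta$ of the Webster metric $h^\lambda$ explicitly under the hypothesis $A_{11}=0$, and then substitute them into the Riemannian structure equations (\ref{strueq3}) to read off all components of $R^\lambda_{\alpha\beta\gamma\delta}$.

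First I would exploit vanishing torsion to trivialize the ``horizontal--vertical'' connection forms: setting $A_{\bar 1\bar 1}=\overline{A_{11}}=0$ in (\ref{2012b}) leaves only the rotational terms $\omega_1^3=\lambda^{-1}\omega^2$ and $\omega_2^3=-\lambda^{-1}\omega^1$. The remaining connection form is governed by (\ref{2012a}), namely $\omega_1^2=-i\,\theta_1{}^{1}+\lambda^{-2}\theta$, so I would differentiate it using the pseudohermitian structure equation (\ref{100241}) with $\tau^1=0$: since $\theta^1\wedge\theta^{\bar 1}=-2i\,\omega^1\wedge\omega^2$ and $d\theta=2\,\omega^1\wedge\omega^2$, one gets $d\theta_1{}^{1}=W\,\theta^1\wedge\theta^{\bar 1}=-2iW\,\omega^1\wedge\omega^2$ and hence $d\omega_1^2=(-2W+2\lambda^{-2})\,\omega^1\wedge\omega^2$. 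Equation (\ref{100240}) with $\tau^1=0$ likewise expresses $d\omega^1$ and $d\omega^2$ through $\omega_1^2$ and $\omega^1\wedge\omega^3,\ \omega^2\wedge\omega^3$.

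Next I would insert all of this into the curvature $2$-forms $\Omega_\alpha^\beta:=d\omega_\alpha^\beta-\omega_\alpha^\gamma\wedge\omega_\gamma^\beta$ coming from (\ref{strueq3}) for $(\alpha,\beta)\in\{(1,2),(1,3),(2,3)\}$. Because $\omega_1^3$ and $\omega_2^3$ are proportional to $\omega^2$ and $\omega^1$, every wedge product that arises collapses to a multiple of a single $2$-form, and one finds (with the normalization conventions of (\ref{strueq3})) that $\Omega_1^2$ is a multiple of $\omega^1\wedge\omega^2$, $\Omega_1^3$ a multiple of $\omega^1\wedge\omega^3$, and $\Omega_2^3$ a multiple of $\omega^2\wedge\omega^3$, with \emph{no} mixed terms. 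Reading off the coefficients gives $R^\lambda_{1313}=R^\lambda_{2323}$ (the vertical sectional curvatures) together with a single horizontal curvature $R^\lambda_{1212}$, all other components vanishing; contracting then yields the diagonal Ricci matrix (\ref{2014e}) with horizontal entries $2W-2\lambda^{-2}$ and vertical entry $2\lambda^{-2}$, and summing the diagonal gives $R^\lambda=4W-2\lambda^{-2}$ as in (\ref{2014f}).

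I expect the only genuine obstacle to be bookkeeping: keeping straight the several sign conventions --- the relation between $\Omega_\alpha^\beta$ and $R^\lambda_{\alpha\beta\gamma\delta}$ and the index contraction defining Ricci in (\ref{strueq3}), together with the factors of $i$ coming from $Z_1=\tfrac12(e_1-ie_2)$ and $\theta^1=\omega^1+i\omega^2$ --- so that the final matrix carries the correct overall sign. Conceptually the result is transparent: vanishing torsion makes the Reeb field $\mathbf{T}$ a Killing field for $h^\lambda$, so $(M,h^\lambda)$ is locally a Riemannian submersion over a surface with totally geodesic circle fibers, i.e. a Berger/Hopf-type metric; that is precisely why the three curvature $2$-forms uncouple and the off-diagonal Ricci components vanish. (Alternatively, (\ref{2014e}) and (\ref{2014f}) can be read off directly from O'Neill's curvature formulas for this submersion.)
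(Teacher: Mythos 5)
The paper does not prove this lemma: it is quoted directly from \cite[Theorem 3.1]{cchi}, and the moving-frames computation you outline is essentially the proof given in that reference. Your key identities check out: with $A_{11}=0$ the connection forms reduce to $\omega_1^3=\lambda^{-1}\omega^2$, $\omega_2^3=-\lambda^{-1}\omega^1$ and $d\omega_1^2=(-2W+2\lambda^{-2})\,\omega^1\wedge\omega^2$, so each curvature two-form is a multiple of a single basic two-form, giving $R^\lambda_{1212}=2W-3\lambda^{-2}$, $R^\lambda_{1313}=R^\lambda_{2323}=\lambda^{-2}$, the diagonal Ricci matrix (\ref{2014e}), and $R^\lambda=4W-2\lambda^{-2}$. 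The only caveat is the one you already flag: the convention in (\ref{strueq3}) relating $d\omega_\alpha^\beta-\omega_\alpha^\gamma\wedge\omega_\gamma^\beta$ to $R^\lambda_{\alpha\beta\gamma\delta}$ must be paired with the matching contraction defining the Ricci tensor, otherwise the entries come out with the opposite overall sign.
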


Now we are ready to prove the classification theorem of complete pseudo-gradient CR Yamabe solitons.

\begin{lemma}
\label{const} Let $\left( M^{3},J,\theta ,\varphi, \mu \right) $
be a complete pseudo-gradient CR Yamabe soliton. Then $|\nabla _{b}\varphi |$ is
constant on each level set of the potential function $\varphi $.
\end{lemma}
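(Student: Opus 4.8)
The plan is to show $(|\nabla_b\varphi|^2)_{e_j}=0$ along directions tangent to a level set of $\varphi$, i.e. in directions orthogonal to $\nabla_b\varphi$. First I would recall from Lemma \ref{l52} that, in the real frame $\{e_1,e_2\}$, the pseudo-gradient soliton equations give $\varphi_{e_1e_2}=\varphi_{e_2e_1}=0$ and $\varphi_{e_1e_1}=\varphi_{e_2e_2}=:2(\mu-W)$. At a regular point $p$ of $\varphi$ one may rotate the frame within the contact plane so that $e_1$ is parallel to $\nabla_b\varphi$ at $p$; then $\varphi_{e_2}(p)=0$ and the level set through $p$ is tangent to $e_2$. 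Since $|\nabla_b\varphi|^2=\tfrac14(\varphi_{e_1}^2+\varphi_{e_2}^2)$, I compute $(\,|\nabla_b\varphi|^2)_{e_2}=\tfrac12(\varphi_{e_1}\varphi_{e_1e_2}+\varphi_{e_2}\varphi_{e_2e_2})$, using the definition $\varphi_{e_ie_j}=e_je_i\varphi-\sigma_i^k(e_j)\varphi_{e_k}$ from \eqref{2014da} to handle the frame terms. Both summands vanish at $p$: $\varphi_{e_1e_2}=0$ by Lemma \ref{l52}(i), and $\varphi_{e_2}(p)=0$ by our choice of frame. Hence $|\nabla_b\varphi|$ is constant along the integral curve of $e_2$ through $p$; since $p$ was an arbitrary regular point and the level set of a regular value is connected by such integral curves (or, more robustly, since $e_2\varphi=0$ means $e_2$ is tangent to the level set and the computation is frame-rotation independent once phrased as ``derivative of $|\nabla_b\varphi|^2$ in any direction $v\perp\nabla_b\varphi$ within $\xi$''), we conclude $|\nabla_b\varphi|$ is locally constant on each level set.

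A cleaner and frame-free way to organize the same computation, which I would actually write down, is this: the Hessian identities of Lemma \ref{l52}(i) say precisely that the restriction of the subhessian $(\nabla^H)^2\varphi$ to the contact plane is $(\mu-W)$ times the identity, i.e. $\varphi_{e_ie_j}=(\mu-W)\,\delta_{ij}$. Therefore for any contact vector field $v$,
\begin{equation*}
v\bigl(|\nabla_b\varphi|^2\bigr)=2\,\langle \nabla_v\nabla_b\varphi,\nabla_b\varphi\rangle_{L_\theta}=2(\mu-W)\,\langle v,\nabla_b\varphi\rangle_{L_\theta}=2(\mu-W)\,v(\varphi),
\end{equation*}
where the middle equality uses $(\nabla^H)^2\varphi=(\mu-W)\,\mathrm{Id}$ on $\xi$ and the last is the defining property of the subgradient. (One must also note $\mathbf{T}(|\nabla_b\varphi|^2)=0$, which follows from $\varphi_0=0$ together with the commutation relations, but this is not needed for the level-set statement.) If $v$ is tangent to a level set of $\varphi$ then $v(\varphi)=0$, so $v(|\nabla_b\varphi|^2)=0$, proving the claim.

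The main subtlety — the only place where care is needed — is the behavior at \emph{critical} points of $\varphi$ and the global (as opposed to infinitesimal) nature of ``constant on each level set.'' A priori ``level set'' includes singular level sets; at a critical point $|\nabla_b\varphi|=0$, so the statement is vacuous there, but one should be careful that a level set containing critical points might still be connected through regular points. The honest statement I would prove is: on the open set of regular points, $d(|\nabla_b\varphi|^2)$ annihilates the tangent planes to level sets, hence $|\nabla_b\varphi|$ is constant on each connected component of each regular level set; by continuity this extends to the closure. I expect the referee-level issue to be exactly this point, and I would handle it by invoking (as the paper does shortly afterward in the discussion of the isoparametric structure, Lemma \ref{isop}) that the function $W$ — equivalently $\mu-W=\tfrac12\Delta_b\varphi$ — is also constant on level sets, which one gets from Lemma \ref{l51}(i): $W=C+\mu\varphi-\tfrac12|\nabla_b\varphi|^2$, so once $|\nabla_b\varphi|$ is constant on a level set, so is $W$, and the whole picture is consistent. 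Since Lemma \ref{const} as stated only claims constancy of $|\nabla_b\varphi|$ on level sets, the short argument above suffices, with the critical-point remark noted in passing.
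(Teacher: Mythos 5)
Your frame-free identity $v(|\nabla_b\varphi|^2)=2(\mu-W)\,v(\varphi)$ for contact directions $v$ is exactly the paper's one-line computation $\nabla_b(|\nabla_b\varphi|^2)=(\mu-W)\nabla_b\varphi$ carried out in the complex frame (using $\varphi_{11}=0$ and $\varphi_{1\bar 1}=\tfrac12\Delta_b\varphi=\mu-W$), so your approach is essentially identical to the paper's and correct to the same degree. One caveat: since $\varphi_0=0$ makes $\mathbf{T}$ tangent to every level set, the $\mathbf{T}$-derivative you set aside as ``not needed'' is in fact needed, and the commutation relations give $\mathbf{T}(|\nabla_b\varphi|^2)=-2\,\mathrm{Re}\bigl(A_{11}\varphi_{\bar 1}^{\,2}\bigr)$, which vanishes only up to a torsion term --- a point on which the paper's own proof is equally silent (the lemma is ultimately applied in the torsion-free setting).
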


\begin{proof}
One computes that 
\begin{align*}
\nabla(|\nabla_b\varphi |^2) = & (\varphi_1\varphi_{\bar{1}})_1Z_{%
\bar{1}} +(\varphi_1\varphi_{\bar{1}})_{\bar{1}}Z_1\\
= & (\mu-W)h_{\bar{1}1}\varphi_1Z_{\bar{1}} + (\mu-W)h_{1\bar{1}%
}\varphi_{\bar{1}}Z_1 \\
=& (\mu -W)\nabla \varphi.
\end{align*}
So the level sets of $|\nabla_b\varphi|$ coincide with the level sets of $%
\varphi$.
\end{proof}

Combining the lemma above with the soliton equation $W+\frac{1}{2}\Delta_b\varphi=\mu$ 
and the conserved quantity $W+\frac{1}{2}|\nabla_b\varphi|^2-\mu\varphi=C$, we immediately obtain the following

\begin{lemma}
\label{isop} Let $\left( M^{3},J,\theta ,\varphi, \mu \right) $
be a complete pseudo-gradient CR Yamabe soliton. Then, the potential function $\varphi$ is an isoparametric function on the Riemannain 3-manifold $(M,h^\lambda)$. 
\end{lemma}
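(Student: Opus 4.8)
The plan is to show that the potential $\varphi$ is \emph{transnormal} on $(M,h^{\lambda})$ -- i.e. $|\nabla^{\lambda}\varphi|^{2}$ is a function of $\varphi$ -- and that $\Delta^{\lambda}\varphi$ is likewise a function of $\varphi$; by the definition of an isoparametric function (cf. \cite{wang}) this is exactly what is required. The first step is a translation between the Webster metric $h^{\lambda}$ and the pseudohermitian data, using the constraint $\varphi_{0}=0$. Since $e_{3}\varphi=\lambda\varphi_{0}=0$ and $\varphi_{00}=0$, the $h^{\lambda}$-gradient of $\varphi$ lies in the contact distribution $\xi$, on which $h^{\lambda}$ is a fixed positive multiple of the Levi metric; hence $|\nabla^{\lambda}\varphi|^{2}$ is a fixed positive multiple of $|\nabla_{b}\varphi|^{2}=\varphi_{1}\varphi_{\bar{1}}$. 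Similarly, using the structure equations (\ref{2012a})--(\ref{2012b}) one checks that the Reeb row and column of the $h^{\lambda}$-Hessian of $\varphi$ vanish (this is where $\varphi_{00}=0$ and $e_{3}\varphi=0$ enter, together with $\nabla^{\lambda}_{e_{3}}e_{3}=0$), and that the remaining $2\times 2$ block is $(\varphi_{e_{i}e_{j}})$; combined with (\ref{2014d}) this gives $\Delta^{\lambda}\varphi=\varphi_{e_{1}e_{1}}+\varphi_{e_{2}e_{2}}=2\Delta_{b}\varphi$. So it suffices to express $|\nabla_{b}\varphi|^{2}$ and $\Delta_{b}\varphi$ as functions of $\varphi$ alone.

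The crucial observation is that the Tanaka--Webster curvature is itself an explicit function of $\varphi$, namely $W=a_{0}e^{\varphi}$ for some constant $a_{0}\in\mathbb{R}$. Indeed, by Lemma \ref{l51}(ii) (stated, as throughout this section, for solitons of vanishing torsion) the function $We^{-\varphi}$ has vanishing subgradient, hence it is constant along every piecewise smooth Legendrian curve of $M$; since $M$ is connected, Chow's connectivity theorem (cf. \cite{cho}) then forces $We^{-\varphi}$ to be globally constant. Granting this, the soliton equation (\ref{2}) gives $\Delta_{b}\varphi=2(\mu-W)=2(\mu-a_{0}e^{\varphi})$, and the conserved quantity of Lemma \ref{l51}(i) gives $|\nabla_{b}\varphi|^{2}=2(\mu\varphi+C-W)=2(\mu\varphi+C-a_{0}e^{\varphi})$, where $C$ is the constant appearing there. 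Both right-hand sides are smooth functions of $\varphi$, and therefore so are $\Delta^{\lambda}\varphi$ and $|\nabla^{\lambda}\varphi|^{2}$. As the soliton is non-trivial, $\varphi$ is non-constant, and hence $\varphi$ is an isoparametric function on $(M,h^{\lambda})$.

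I expect the only genuinely substantive point to be the passage from ``$|\nabla_{b}\varphi|$ (equivalently, by the conserved quantity, $W$) is constant on each level set of $\varphi$'' -- which is just Lemma \ref{const} combined with Lemma \ref{l51}(i) -- to ``$W$ is an honest function of $\varphi$''. Being constant on level sets is not by itself enough to be transnormal (one would have to control, along each gradient trajectory, the derivative of $|\nabla^{\lambda}\varphi|^{2}$ with respect to $\varphi$ and see that it is independent of the trajectory), and this is precisely what the Lemma \ref{l51}(ii)/Chow argument supplies, by pinning down $W$ as $a_{0}e^{\varphi}$ on all of $M$ at once. Everything else -- the dictionary identities $\Delta^{\lambda}\varphi=2\Delta_{b}\varphi$ and $|\nabla^{\lambda}\varphi|^{2}=c\,|\nabla_{b}\varphi|^{2}$ -- is routine bookkeeping with (\ref{2012a})--(\ref{2012b}) and (\ref{2014da})--(\ref{2014d}), the one subtlety being the verification that the Reeb-direction part of the $h^{\lambda}$-Hessian of $\varphi$ vanishes.
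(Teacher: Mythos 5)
Your proposal is correct (modulo the vanishing-torsion caveat, which you rightly flag: Lemma \ref{l51}, and hence this argument, needs $A_{11}=0$ even though the lemma's statement omits it --- the paper's own proof has the same implicit dependence through the conserved quantity), but it takes a genuinely different route from the paper's. The paper's proof is a one-liner: combine Lemma \ref{const} with the soliton equation and with $W+\tfrac{1}{2}|\nabla_b\varphi|^2-\mu\varphi=C$, so that $|\nabla_b\varphi|$, hence $W$, hence $\Delta_b\varphi=2(\mu-W)$ are all constant on each level set of $\varphi$; this matches the working definition of ``isoparametric'' recorded in Remark \ref{isopara}. You instead invoke Lemma \ref{l51}(ii) together with Chow connectivity to pin down $W=a_0e^{\varphi}$ globally, which upgrades ``constant on each (connected component of each) level set'' to the honest functional relations $\Delta\varphi=a(\varphi)$ and $|\nabla\varphi|^2=b(\varphi)$ with explicit smooth $a,b$ --- i.e.\ Wang's actual definition, which is what the focal-variety structure theory quoted in Remark \ref{isopara} and used in Theorem \ref{E} really requires. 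Your observation that level-set constancy alone does not formally yield transnormality when level sets may be disconnected is a fair criticism of the paper's compressed argument, and the $We^{-\varphi}\equiv a_0$ step is exactly what closes that gap; the dictionary identities $|\nabla^{\lambda}\varphi|^2=c\,|\nabla_b\varphi|^2$ and $\Delta^{\lambda}\varphi=2\Delta_b\varphi$ are indeed routine consequences of $\varphi_0=0$ and (\ref{2012a})--(\ref{2012b}). In short, your proof buys a sharper conclusion (explicit $a$, $b$) at the cost of using part (ii) of Lemma \ref{l51} in addition to part (i), while the paper's is shorter but rests on the weaker, level-set formulation of isoparametricity.
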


\begin{remark}\label{isopara}
In \'E. Cartan's study of Lie groups, a function $f$ is called an 
isoparametric function if both $|\nabla f|$ and $\Delta f$ depend 
only on the value of $f$ (i.e., constant on each level set of $f$). 
Note that, by the work of Q. M. Wang in \cite{wang} on isoparametric functions, the only possible singular level sets of the potential function $\varphi$ are smooth submanifolds 
$$\Sigma_+:= \{\varphi = \max_{x\in M}\varphi(x)\}\ \mbox{ and }\ \Sigma_-:= \{\varphi = \min_{x\in M}\varphi(x)\},$$
which are called  the focal varieties of $\varphi$.
Moreover, regular level sets are ``tubes" over either $\Sigma_+$ or $\Sigma_{-}$ and these ``tube" leaves together form foliations of tubular neighborhoods around either $\Sigma_+$ or $\Sigma_-$. 
Finally, the argument in \cite{wang} can be further used to show that if an isoparametric function $f$ is locally constant then $f$ must be a constant function. Hence, the critical set of our non-trivial potential function $\varphi$ is at most $2$-dimensional. The relevant details will be provided in the Appendix for the reader's convenience.
\end{remark}

\textbf{The Proof of Theorem \ref{E} :}

\begin{proof} In view of Lemma \ref {isop}, it remains to classify the possible diffeomorphism types of $M^3$. 
Consider again the adapted metric  
\begin{equation*}
g_{\lambda }:=\frac{1}{2}d\theta (\cdot ,J\cdot )+\lambda ^{-2}\theta ^{2}.
\end{equation*}%
It follows from Lemma \ref{l53} that, when torsion vanishes, 
\begin{equation*}
Ric^{\lambda}=\left( 
\begin{array}{ccc}
2W-2\lambda ^{-2} & 0 & 0 \\ 
0 & 2W-2\lambda ^{-2} & 0 \\ 
0 & 0 & 2\lambda ^{-2}%
\end{array}%
\right)
\end{equation*}%
with respect to the orthonormal frame $\{e_{1},e_{2},e_{3}=\lambda \mathbf{T}\}$, where $%
e_{1}=Z_{1}+Z_{\bar{1}}$ and $e_{2}=i(Z_{1}-Z_{\bar{1}})$. 

When dimension $n=3$, 
\begin{equation*}
R^{\lambda}_{2323}-R^{\lambda}_{1313}=R^{\lambda}_{22}-R^{\lambda}_{11}=0
\end{equation*}
and 
\begin{equation*}
R^{\lambda}_{2323}+R^{\lambda}_{1313}=R^{\lambda}_{33}=2\lambda ^{-2}.
\end{equation*}
So we know that 
\begin{equation*}
R^{\lambda}_{2323}=R^{\lambda}_{1313}=\lambda ^{-2}.
\end{equation*}
Moreover, let $V$ be any unit vector which is a linear combination of $e_{1}$ and $e_{2}$, then
the sectional curvature of the two-plane spanned by $e_{3}$ and $V$ is
merely a linear combination of $R^{\lambda}_{2323}$ and $R^{\lambda}_{1313}$, hence 
\begin{equation*}
Rm^{\lambda}(V,e_{3},V,e_{3})=\lambda ^{-2}.
\end{equation*}%
Note that this property holds only for the $3$-dimensional case, because the
Weyl tensor always vanishes and the curvature operator is diagonalized as
long as $Ric^{\lambda}$ and $g_{\lambda}$ are diagonalized.

Since by assumption the pseudo-gradient CR Yamabe soliton $M^3$ is non-trivial, $\varphi $ is not a constant function. By Lemma \ref{const}, if 
$\nabla \varphi \neq 0$ at some point $p$, then $\nabla \varphi \neq 0$ on
the whole level set $\Sigma _{\varphi (p)}:=\{ \varphi =\varphi (p)\}$. Such 
$\Sigma _{\varphi (p)}$ (with non-vanishing $\nabla \varphi $) are called
regular. We are going to compute the Gaussian curvature of regular level
surfaces.

\smallskip
{\bf Claim}:  Each regular level surface $\Sigma$ of $\varphi $ has zero Gaussian curvature. 
\smallskip
  
Indeed, for any regular level surface $\Sigma$ of $\varphi$, first note that $\varphi _{0}=0$ means that $\mathbf{T}$ lies in the tangent subbundle $T\Sigma $ 
and is perpendicular to $\nabla \varphi $, so one can choose 
$$E_{1}=\frac{\nabla \varphi}{|\nabla\varphi|}=\alpha e_1 +\beta e_2\ \mbox{ and }\
E_{2}=\beta e_1 -\alpha e_2,$$
where $\alpha, \beta $ are real-valued functions on $M$ 
such that $E_1,E_2$ and $E_{3}=e_3=\lambda\mathbf{T}$ form an orthonormal frame. 
The Gaussian curvature $K^{\lambda}$ of $\Sigma$  
is given by the Gauss equation 
\begin{equation*}
K^{\lambda}=Rm^{\lambda}(E_2,E_3,E_2,E_3)
-\mathrm{II}(E_{2},E_{3})^{2}+\mathrm{II}(E_{2},E_{2})\mathrm{II}(E_{3},E_{3}),
\end{equation*}%
where $\mathrm{II}$, the second fundamental form, can be computed
via the connection $1$-forms (cf. equation (\ref{2012b}))
$$ \omega_1^3=\lambda^{-1}\omega^2\ \mbox{ and }\ \omega_2^3= -\lambda^{-1}\omega^1.$$
Indeed, we have
$$\mathrm{II}(E_{3},E_{3})
=\left \langle \nabla _{e_{3}}e_{3},E_1\right \rangle 
=\omega_3^1(e_3)
=0.$$
Similarly, since
$$\nabla_{E_2} \mathbf{T}
=\lambda^{-1}\nabla_{\beta e_1-\alpha e_2} e_3
=\lambda^{-1}(\beta \omega_3^k(e_1)e_k -\alpha\omega_3^k(e_2)e_k)
=\lambda^{-1}(\beta \lambda^{-1}e_2 +\alpha\lambda^{-1} e_1)
=\lambda^{-2}E_1,$$
we obtain
\begin{align*}
\mathrm{II}(E_{2},E_{3})
&=\left \langle \nabla _{E_2}e_{3},\frac{\nabla\varphi }{|\nabla \varphi |}\right \rangle 
=\lambda^{-1}.
\end{align*}
Moreover, from the previous paragraph, we have known that 
$Rm^{\lambda}(E_2,E_3,E_2,E_3)=\lambda ^{-2}$. Hence, $K^{\lambda}=0$.

So it follows that each regular level surface $\Sigma$ is isometric to either the flat torus $\mathbb{T}^2$, or 
the cylinder $\mathcal{C}=\mathbb{S}^1\times\mathbb{R}$, or the plane $\mathbb{R}^2$.
Moreover,  from the above computation,  one sees that both $\nabla_{E_3} \mathbf{T}=0$ and 
$\nabla_{E_2} \mathbf{T}=0 \mbox{ mod } E_1$, so $\mathbf{T}$ is parallel on the level 
surface with respect to the metric $g_\Sigma$ induced from $h^\lambda$. Note that the critical set of $\varphi$ cannot contain isolated points because
$\varphi_0=0$ ensures that the trajectory along $\mathbf{T}$ starting 
from a critical point is necessarily contained in the critical set. Hence, each connected component of the critical set is at least $1$-dimensional. On the other hand, 
by Lemma 6.2, the potential function $\varphi$ cannot be locally constant so the critical set of $\varphi $ is at most 2-dimensional. Thus, the critical set of $\varphi$, if non-empty, may consist of only curves diffeomorphic to the real line or the circle, or surfaces diffeomorphic to the torus, the cylinder, or the plane (by Remark \ref{isopara}). 
Furthermore, if the critical set of $\varphi$ is non-empty it could contain a number of surfaces, but at most two curves because regular level surfaces shrinking to a critical curve will force the manifold to ``close up". So we have the following three cases.

\smallskip

\smallskip
\underline{Case 1}. The critical set of $\varphi$ contains no curves. 

\smallskip
In this case, $M$ is diffeomorphic to either $\mathbb{R}^3$, $\mathbb{T}^2\times \mathbb{R}$, or $\mathbb{S}^1\times \mathbb{R}^2$: 

\begin{itemize}
\item When the critical set is empty, in view of Lemma 5.2 and by using the gradient flow of $\varphi$, it follows that $M$ is diffeomorphic to $\Sigma\times \mathbb{R}$, where $\Sigma=\mathbb{T}^2$, $\mathcal{C}$, or $\mathbb{R}^2$.
This family of flat level surfaces is a foliation of $(M,h^\lambda)$.

\item All critical components are $2$-dimensional. 
If one of the critical surfaces is diffeomorphic to the torus $\mathbb{T}^2$, then by Remark \ref{isopara} the regular level surfaces nearby are layers of the tubular neighborhood which move away from the critical $\mathbb{T}^2$ and form a foliation by tori until they encounter other critical surfaces. Note that other critical surfaces must be tori too. When such critical tori occur, there must be regular tori afterwards.  So the regular foliation continues again. No matter how many critical tori are there, each is joined by foliations of regular tori. Thus, $M$ is diffeomorphic to $\mathbb{T}^2\times \mathbb{R}$. Similarly, if one of the ctitical surfaces is the cylinder $\mathcal{C}$, then all critical surfaces are cylinders and $M$ is diffeomorphic to $\mathcal{C}\times\mathbb{R}\approx S^1\times\mathbb{R}^2$; Finally, if one of the critical surfaces is $\mathbb{R}^2$, then all critical surfaces are $\mathbb{R}^2$ and $M$ is diffeomorphic to $\mathbb{R}^3$. 
\end{itemize}

\smallskip
\underline{Case 2}. The critical set of $\varphi$ contains only one curve $\gamma$. 

\smallskip
In this case, $M$ is diffeomorphic to either $\mathbb{R}^3$ or $\mathbb{T}^2\times [0,\infty)$ with $\mathbb{T}^2\times \{0\}$ collapsing to ${\mathbb S}^1$: 

\begin{itemize}
\item
When $\gamma$ is topologically the real line $\mathbb{R}$, nearby regular level surfaces must be the cylinder $\mathcal{C}$ which move away from $\gamma$ and become a foliation until a critical cylinder occurs. 
Similar to Case 1, no matter how many successive critical cylinders are there, $M$ has to be diffeomorphic to $\mathbb{R}^3$ which is $\mathcal{C}\times [0,\infty)$ with $\mathcal{C}\times \{0\}$ collapsing to $\gamma$. The pseudo Gaussian solitons on the Heisenberg group with $\mu\neq 0$ are of this type and one can check that the adapted metric is 
$$h^\lambda = (e^1)^2 + (e^2)^2 + \lambda^{-2}\theta^2
=(E^1)^2 + (E^2)^2 + \lambda^{-2}\theta^2.$$
\item
When $\gamma$ is a closed curve (i.e., topologically a circle), regular level surfaces must be $\mathbb{T}^2$. Therefore, $M$ is diffeomorphic to $\mathbb{T}^2\times [0,\infty)$ with $\mathbb{T}^2\times \{0\}$ collapsing to ${\mathbb S}^1$. 
This can be realized by taking a quotient of the pseudo Gaussian solitons 
on the Heisenberg group such that the critical line becomes the critical circle.
\end{itemize}

\smallskip
\underline{Case 3}. The critical set of $\varphi$ contains two curves $\gamma_1$ and $\gamma_2$. 

\smallskip
In this case, M is diffeomorphic to either ${\mathbb S}^2 \times {\mathbb R}^1$, ${\mathbb S}^3$, or the lens spaces $L(p,q)$ with $1\leq q<p$:

Since the regular level surfaces are tubes over either $\gamma_1$ or $\gamma_2$ and they form tubular neighborhoods of both $\gamma_1$ and $\gamma_2$, the two curves must both be lines or both be closed curves. As before, 
they are joined by either a family of cylinders or a family of tori, no matter whether these surfaces are critical or regular.
\begin{itemize}

\item
When $\gamma_1$ and $\gamma_2$ are lines, regular level surfaces must be $\mathcal{C}$ and 
M is diffeomorphic to ${\mathbb S}^2 \times {\mathbb R}^1$, where the $\gamma_i$ ($i=1,2$) correspond to $\{N\}\times {\mathbb R}^1, \{S\} \times {\mathbb R}^1$ and cylinders are $K\times {\mathbb R}^1$. Here $N, S$ are the north and south poles respectively, and the $K$'s are horizontal circles ${\mathbb S}^2 \cap \{z=\mbox{constant $c$}\}$ in ${\mathbb S}^2$.


\item
When $\gamma_1$ and $\gamma_2$ are closed curves, 
$M$ admits a foliation of flat tori which degenerates at $\gamma_1$ and $\gamma_2$.
In particular, $M$ admits a Heegaard splitting of two solid tori which means that
$M$ is diffeomorphic to ${\mathbb S}^3$, or ${\mathbb S}^2\times {\mathbb S}^1$, or the lens spaces $L(p,q)$ with $1\leq q<p$. 
However, ${\mathbb S}^2\times {\mathbb S}^1$ can be excluded because the first Betti number of a torsion free closed CR manifold must be even (cf. Appendix of \cite{CH} or Theorem 4.3 in \cite{T}).
\end{itemize}

To summarize, $M$ is diffeomorphic to one of the following spacess:  

\medskip
\noindent $\mathbb{R}^3$, $S^3$, $L(p,q)$, $S^2\times \mathbb{R}$, $S^1\times\mathbb{R}^2$, $\mathbb{T}^2\times \mathbb{R}$, $\mathbb{T}^2\times [0,\infty)\mbox{ with }\mathbb{T}^2\times \{0\} \mbox{ collapsing to }S^1$.

\end{proof}
\textbf{The Proof of Corollary \ref{F} :}

\begin{proof}
We have mentioned in Remark \ref{isopara} that the critical set of 
$\varphi$ consists of $\Sigma_+$ and $\Sigma_-$. 
When $\Delta\varphi=\Delta_b\varphi = 2(\mu-W)$ does not change sign, 
$\varphi$ cannot attain both minimum and maximum on $M$ and thus $M$ is non-compact. 

Suppose that $W>\mu$, then $\Delta\varphi<0$ and $\Sigma_-$ must be empty. 
If $\Sigma_+$ is also empty, then by Theorem \ref{E} (i) $M$ must be  
diffeomorphic to $\mathbb{R}^3$ since it is simply connected. 
On ther other hand, if $\Sigma_+$ is non-empty then it has to be connected for otherwise 
$\varphi$ must attain a local minimum somewhere, which is impossible.
Hence the critical set of $\varphi$ has exactly one component.
It follows from this fact, the assmption of $M$ being simply connected, and Theorem \ref{E} (i) (ii) that $M$ is necessarily diffeomorphic to $\mathbb{R}^3$
(note that the critical set of $\varphi$ for $S^2\times\mathbb{R}$ consists of two lines, so it is not allowed here).
Therefore, 
the only non-trivial simply-connected pseudo-gradient CR Yamabe soliton 
with $W>\mu$ must be diffeomorphic to $\mathbb{R}^3$.

The case $W<\mu$ can be proved similarly.
\end{proof}

\section{Appendix}

We include some basic facts about isoparametric functions mentioned in 
Remark \ref{isopara}.
Recall that our isoparametric potential function $\varphi$
satisfies $|\nabla \varphi|= b(\varphi)$ for some function $b: \mathcal{R}\to\mathbb{R}$,
where $\mathcal{R}$ denotes the range of $\varphi$.

\begin{lemma}[cf. Lemma 3 in \cite{wang}]
The only possible singular level sets 
of $\varphi$ are the smooth focal submanifolds  
$$\Sigma_+:= \{ \varphi= \max_{x\in M} \varphi(x)\}\ \mbox{ and }\ \Sigma_-:= \{\varphi = \min_{x\in M} \varphi(x)\}.$$ 
\end{lemma}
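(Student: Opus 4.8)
The plan is to run the classical \'E.~Cartan--Q.~M.~Wang analysis of isoparametric functions in our situation. First I would record the structure that comes for free: by hypothesis there are functions $b\ge 0$ and $a$ on the range $\mathcal{R}=\varphi(M)$ with $|\nabla\varphi|=b(\varphi)$ and $\Delta\varphi=a(\varphi)$, and $\mathcal{R}$ is an interval because $M$ is connected. The point that does most of the work is that $|\nabla\varphi|$ is constant along level sets: for every $c\in\mathcal{R}$ and every $x\in\varphi^{-1}(c)$ one has $|\nabla\varphi(x)|=b(c)$, so $\varphi^{-1}(c)$ consists entirely of regular points when $b(c)\ne 0$ and entirely of critical points when $b(c)=0$. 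By the regular value theorem the former case gives a smooth embedded hypersurface, so the whole assertion reduces to showing (i) $b(c)=0\Rightarrow c=\max_{M}\varphi$ or $c=\min_{M}\varphi$, and (ii) the two extreme level sets $\Sigma_{\pm}$ are smooth submanifolds.

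Next I would set up the normalized gradient flow on the open regular set $M_{0}=\{\nabla\varphi\ne 0\}$. Differentiating $|\nabla\varphi|^{2}=b(\varphi)^{2}$ and using symmetry of the Hessian gives $\nabla_{\nabla\varphi}\nabla\varphi=\tfrac12(b^{2})'(\varphi)\,\nabla\varphi$, so integral curves of $\nabla\varphi$ are reparametrized geodesics; equivalently, the geodesic through a regular point $p$ in the direction $\nu(p)=\nabla\varphi(p)/|\nabla\varphi(p)|$ coincides with the gradient trajectory through $p$ and stays orthogonal to every regular level set it meets, and along such a unit-speed geodesic $\sigma$ one has $\tfrac{d}{ds}(\varphi\circ\sigma)=b(\varphi\circ\sigma)$. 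Since $(M,h^{\lambda})$ is complete these geodesics are defined for all parameter values, so the flow $\psi_{t}$ determined by $\varphi\circ\psi_{t}=\varphi+t$ carries regular level sets diffeomorphically onto one another and identifies any slab $\varphi^{-1}((c_{1},c_{2}))$ on which $b>0$ with a product $\Sigma\times(c_{1},c_{2})$, $\varphi$ corresponding to the second coordinate.

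Then I would exclude interior zeros of $b$. Suppose $b(c_{0})=0$ with $c_{0}$ interior to $\mathcal{R}$; then $\varphi^{-1}(c_{0})$ is non-empty and purely critical. By Lemma 6.2 a locally constant isoparametric function is constant, so $b$ cannot vanish on a subinterval of $\mathcal{R}$, and regular values accumulate at $c_{0}$ from both sides. Picking a regular value $c_{1}$ near $c_{0}$, a point $p\in\varphi^{-1}(c_{1})$, and the normal geodesic $\sigma$ from $p$ toward $c_{0}$, the function $u(s)=\varphi(\sigma(s))$ solves the autonomous equation $u'=\pm b(u)$, hence moves monotonically toward $c_{0}$ and reaches $\varphi^{-1}(c_{0})$, if at all, at a parameter value depending only on $c_{1}$ and $c_{0}$ and not on $p$. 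This uniform focusing of the normal geodesics of $\varphi^{-1}(c_{1})$ is exactly the hypothesis of Wang's local structure theorem, which presents $\varphi^{-1}(c_{0})$ as a smooth submanifold of codimension $\ge 2$ in a tubular neighborhood of which $\varphi$ is a monotone function of the distance to it; but such a $\varphi$ has a local extremum along $\varphi^{-1}(c_{0})$, contradicting that values both above and below $c_{0}$ are attained nearby. Hence $b>0$ on the interior of $\mathcal{R}$, only $\Sigma_{+}$ and $\Sigma_{-}$ can be singular, and applying the same normal-exponential-map analysis at the endpoint values shows $\Sigma_{\pm}$ are smooth focal submanifolds.

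I expect the main obstacle to be precisely this local structure input: upgrading ``all normal geodesics of a regular level set focus at a common parameter'' to ``the limiting critical set is a smooth submanifold, near which $\varphi$ is a function of distance.'' This is the technical core of Wang's theory of isoparametric functions; I would invoke it as a citation, verifying only that our smooth isoparametric $\varphi$ on the complete Riemannian manifold $(M,h^{\lambda})$ meets its hypotheses. Everything else --- the regular/critical dichotomy on level sets, the geodesic property of the normalized gradient flow, the slab product structure, and the ODE $u'=\pm b(u)$ along normal geodesics --- is elementary.
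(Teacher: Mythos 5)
Your setup (the regular/critical dichotomy on level sets, gradient trajectories as reparametrized geodesics, the ODE $u'=\pm b(u)$ along normal geodesics) matches the standard Cartan--Wang framework and is fine. But the crux of the lemma --- ruling out an interior critical value $c_0$ --- is not actually established in your proposal. The contradiction you offer only applies in the case where the normal geodesics from a nearby regular level set \emph{do} reach $\varphi^{-1}(c_0)$ at a finite, uniform arc-length parameter (your ``if at all'' hedge); you then feed this ``uniform focusing'' into a local structure theorem. The problem is that this case never occurs: since $b\geq 0$ and $b(c_0)=0$ at an \emph{interior} point of the range, $b'(c_0)=0$ and hence $b(u)\leq A(u-c_0)^2$ near $c_0$, so $\int_{c_0}^{c_1}du/b(u)=\infty$ and the trajectories approach the critical level only asymptotically, never reaching it. Your argument derives no contradiction in that (the actual) case, so the gap sits exactly where the work has to be done. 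There is also a whiff of circularity in invoking Wang's local structure theorem here, since the statement being proved \emph{is} essentially Wang's Lemma 3.

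The paper's own proof is much shorter and purely calculus-based, and it is worth internalizing: the quantity $\int_{c_0}^{c_0+\epsilon} d\varphi/|\nabla\varphi|$ is the arc length of a gradient trajectory (equivalently, a lower bound for the length of \emph{any} path, since $|d(\varphi\circ\gamma)/ds|\leq b(\varphi\circ\gamma)$) joining the level sets $\varphi^{-1}(c_0)$ and $\varphi^{-1}(c_0+\epsilon)$, both nonempty, hence finite by completeness and connectedness of $(M,h^{\lambda})$; on the other hand the second-order vanishing of $b$ at an interior zero forces this integral to diverge. That single comparison is the entire proof that singular values can only be $\max\varphi$ or $\min\varphi$; the smoothness of the focal varieties $\Sigma_{\pm}$ is then the part genuinely delegated to \cite{wang}. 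If you repair your argument by replacing the ``uniform focusing $\Rightarrow$ local structure'' step with this finiteness-versus-divergence comparison, the rest of your outline goes through.
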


\begin{proof} We follow the argument in \cite{wang}. 
Suppose that $b=|\nabla \varphi|=0$ on the level set 
$\Sigma_c:= \{x\in M |\varphi(x) = c\}$ for some critical value $c\in (\inf_{M}\varphi,\sup_{M}\varphi)$. Then, there exists some number $\epsilon>0$ sufficiently small so that $c$ is the only critical value in interval $[c, c+\epsilon]$. 
In particular, we have  
\begin{equation}
\int_c^{c+\epsilon} \frac{1}{|\nabla \varphi|} d\varphi =\lim_{a\to c^{+}} \int_a^{c+\epsilon} \frac{1}{|\nabla \varphi|} d\varphi=  \epsilon.
\end{equation}

On the other hand, since $b\geq 0$ on $\mathcal{R}$ and $c$ is an interior point in $\mathcal{R}$, one has $b'=0$ at $c$ so the function $b$ is locally bounded by its Taylor's reminder around $c$, i.e., $b(x)\leq A(x-c)^2$ for $x\in [c, c+\epsilon]$ and some constant $A>0$. However, this would imply that 
$$\int_c^{c+\epsilon} \frac{1}{|\nabla \varphi|} d\varphi \geq 
\int_c^{c+\epsilon} \frac{1}{A(x-c)^2} d\varphi=\infty,$$
which contradicts (6.1). 
\end{proof}

Suppose that $\varphi$ is locally a constant function on some open set $U$ of $M$, 
then $b$ and $b'$ will vanish at the boundary of $U$, which is impossible as shown
in the proof of Lemma 6.1. Therefore, we have the following lemma (which is used in the proof of Theorem \ref{E}).

\begin{lemma} The non-constant potential function $\varphi$ cannot be locally constant. In particular, its critical set is at most $2$-dimensional. 
\end{lemma}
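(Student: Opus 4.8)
The plan is to argue by contradiction, making precise the reduction sketched before the statement. Assume $\varphi$, though non-constant, is identically equal to a value $c$ on a nonempty open set, and let $U$ be a connected component of the interior of $\{\varphi=c\}$; since $M$ is connected and $\varphi$ is not constant, $U\neq M$ and $\partial U\neq\emptyset$. Fix $p\in\partial U$. Because $\varphi$ is smooth and $\equiv c$ on the open set $U$, every derivative of $\varphi$ vanishes at $p$; in particular $\nabla\varphi(p)=0$ and $\varphi-c$ vanishes to infinite order at $p$. Hence, writing $|\nabla\varphi|=b(\varphi)$ for the isoparametric profile (Lemma~\ref{isop}), we get $b(c)=0$, so $\Sigma_c=\{\varphi=c\}$ is a singular level set containing the critical set $U$; by Lemma~6.1 this forces $c=\max_M\varphi$ or $c=\min_M\varphi$, say $c=\max_M\varphi$.

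The remaining task is to exclude this endpoint situation, and here I would use that the profile is rigid. By Lemma~\ref{l51}(ii) $We^{-\varphi}$ is constant, so $W$ is an analytic function of $\varphi$; then Lemma~\ref{l51}(i) shows $b(\varphi)^2=2\bigl(C+\mu\varphi-W(\varphi)\bigr)$ is analytic in $\varphi$ and not identically zero, so $b$ vanishes to a finite positive order at $c$. If that order makes $\int_{c-\delta}^{c}d\varphi/b(\varphi)$ diverge (which occurs whenever $b^2$ has a zero of order $\geq 2$ at $c$), I would contradict completeness exactly as in the proof of Lemma~6.1: along any path $\gamma$ joining a regular level set $\Sigma_{c-\delta}$ to $\Sigma_c$ one has $|d(\varphi\circ\gamma)/ds|\le b(\varphi\circ\gamma)$, hence
\[
\mathrm{dist}_{h^\lambda}\bigl(\Sigma_{c-\delta},\Sigma_c\bigr)\;\ge\;\int_{c-\delta}^{c}\frac{d\varphi}{b(\varphi)}\;=\;+\infty,
\]
impossible since $\Sigma_c\neq\emptyset$ and $(M,h^\lambda)$ is complete. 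In the only remaining case, a simple zero of $b^2$ at $c$, the same estimate shows the tube radius behaves like $(c-\varphi)^{1/2}$, a genuine power of $c-\varphi$; but the infinite-order flatness of $\varphi$ along $U$ forces $c-\varphi$ to decay faster than every power of the distance to $\overline{U}$, a contradiction. (Alternatively one may invoke the structure theorem of \cite{wang} quoted in Remark~\ref{isopara}: as the soliton profile is analytic, $\Sigma_{\pm}$ are genuine focal submanifolds of positive codimension and so cannot contain the open set $U$.) Therefore $\varphi$ cannot be locally constant.

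The last assertion is then immediate: since $\varphi_0=0$, the critical set of $\varphi$ equals $\{\nabla_b\varphi=0\}$, and if it had nonempty interior then $\varphi$ would be locally constant there, which we have just excluded. Thus the critical set has empty interior, and being contained in the smooth focal submanifolds $\Sigma_+\cup\Sigma_-$ (Remark~\ref{isopara}) it is at most $2$-dimensional. I expect the main obstacle to be precisely the endpoint case $c=\max_M\varphi$ (or $\min_M\varphi$), for which Lemma~6.1 gives no contradiction on its own: one must use the soliton conservation laws of Lemma~\ref{l51} to control the order of vanishing of $b$, and in the borderline simple-zero case play this against the $C^\infty$-flatness of $\varphi$ on $U$.
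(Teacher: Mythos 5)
Your proof follows the same core strategy as the paper's: show that $b$ vanishes (to sufficiently high order) at the boundary value $c$ of the plateau $U$ and then rerun the divergent-integral/completeness argument from Lemma 6.1. The paper disposes of this in one line by asserting that "$b$ and $b'$ vanish at the boundary of $U$"; you correctly notice that this is not automatic when $c$ is an endpoint of the range (the sign argument of Lemma 6.1 that forces $b'(c)=0$ uses that $c$ is interior), and you supply the missing input from the conservation laws of Lemma \ref{l51}, which is a genuine improvement in rigor. The one weak spot is your primary argument in the simple-zero branch: from "tube radius $\sim (c-\varphi)^{1/2}$" you only get $c-\varphi(x)\gtrsim d(x,\Sigma_c)^2$, while flatness gives $c-\varphi(x)=o\bigl(d(x,\overline{U})^N\bigr)$; since $\overline{U}\subseteq\Sigma_c$ one has $d(x,\Sigma_c)\le d(x,\overline{U})$, so these two bounds are not in conflict unless you produce points approaching $\partial U$ at comparable distance from $\Sigma_c$ and from $\overline{U}$, which you do not do. Fortunately this branch is vacuous: on the open set $U$ one has $\Delta_b\varphi=0$, so the soliton equation gives $W=\mu$ there, hence $W(c)=\mu$ as a function of the value; since $(b^2)'(t)=2\bigl(\mu-W(t)\bigr)$ by Lemma \ref{l51}(i) (or Lemma \ref{const}), this forces $(b^2)'(c)=0$, so the zero of $b^2$ at $c$ has order at least two and your first branch applies directly. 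Your parenthetical alternative (the focal varieties of Remark \ref{isopara} are smooth submanifolds, and a $3$-dimensional one in a connected $M$ would be all of $M$) also closes the gap. With either repair the argument is complete, and the final deduction that the critical set has empty interior and hence dimension at most $2$ is fine.
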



\begin{thebibliography}{999999}

\bibitem[Br]{b} L. E. J. Brouwer, 
{\em \"Uber Abbildung von Mannigfaltigkeiten}, 
Math. Ann., {\bf 71} (1912), 97--115.

\bibitem[Cao]{cao} H.-D. Cao, 
{\em On Harnack's inequalities for the K\"ahler-Ricci flow}, 
Invent. Math., {\bf 109} (1992), 247--263.

\bibitem[CaoC]{caoc} J. Cao and S.-C. Chang, {\em Pseudo-Einstein and $Q$-flat metrics with eigenvalue estimates on CR-hypersurfaces},  Indiana Univ. Math. J., {\bf 56} (2007), 2840--2857.

\bibitem[CC]{cc} S.-C. Chang and J.-H. Cheng, 
{\em The Harnack estimate for the Yamabe flow on CR manifolds of dimension $3$},  
Ann. Glob. Anal. Geom., {\bf 21} (2002), 111--121.

\bibitem[CCC]{ccc} S.- C. Chang, J.- H. Cheng and H.-L. Chiu, 
{\em A fourth order curvature flow on a CR $3$-manifold}, 
Indiana Univ. Math. J., {\bf 56} (2007), 1793--1825.


\bibitem[CCGG+]{ccgg+} B. Chow, S.-C. Chu, D. Glickenstein, C. Guenther, J. Isenberg, T. Ivey, D. Knopf, P. Lu, F. Luo and L. Ni,
{\em The Ricci flow: Techniques and Applications. Volume 2 - Part I: Geometric Aspects},
Amer. Math. Soc., Providence, RI, 2007.

\bibitem[CChi]{cchi} S.-C. Chang and H.-L. Chiu, 
{\em On the CR analogue of Obata's theorem in a pseudohermitian $3$-Manifold}, 
Math. Ann., {\bf 345} (2009), 33--51.

\bibitem[CCW]{ccw} S.-C. Chang, H.-L. Chiu and C.-T. Wu, 
{\em The Li-Yau-Hamilton inequality for Yamabe flow on a closed CR $3$-manifold}, 
Trans. Amer. Math. Soc., {\bf 362} (2010), 1681--1698.


\bibitem[CCZ]{CaoChenZhu} H.-D. Cao, B.-L. Chen and X.-P. Zhu,
{\em Recent Developments on Hamilton’s Ricci flow}
Surv. Diff. Geom., Vol. XII (2008), 47--112.


\bibitem[CH]{CH} S.S. Chern and R.S. Hamilton, 
{\em On {R}iemannian metrics adapted to  three-dimensional contact manifolds}, 
Lecture Notes in Math., {\bf 1111}, Springer, Berlin (1985), 279--308.


\bibitem[Ch3]{ch3} B. Chow, 
{\em The Yamabe flow on locally conformally flat manifolds with positive Ricci curvature}, 
Comm. Pure Appl. Math., {\bf 45} (1992), 1003--1014.

\bibitem[Chen]{chen} B.-L. Chen, 
{\em Strong uniqueness of the Ricci flow}, 
J. Diff. Geom., {\bf 82} (2009), 363--382.

\bibitem[Cho]{cho} W.-L. Chow, 
{\em \"Uber System Von Linearen Partiellen Differentialgleichungen erster Orduung}, 
Math. Ann., {\bf 117} (1939), 98--105.

\bibitem[CKL]{ckl} S.-C. Chang, T.-J. Kuo and S.-H. Lai, 
{\em CR Li-Yau gradient estimate and linear entropy formulae for Witten Laplacian via Bakry-Emery
pseudohermitian Ricci curvature},  
submitted.

\bibitem[CKW]{ckw} S.-C. Chang, Otto van Koert and C.-T. Wu, 
{\em The torsion flow on a closed pseudohermitian $3$-manifold}, preprint.

\bibitem[CL]{cl} J.-H. Cheng and J. M. Lee, \textit{The Burns-Epstein invariant and deformation of the CR structures}, 
Duke Math. J., {\bf 60} (1990), 221--254.

\bibitem[CLN]{cln} B. Chow, P. Lu, L. Ni, 
{\it Hamilton's Ricci flow}.
Grad. Stud. Math., 77. 
Amer. Math. Soc., Providende, RI, 2006.

\bibitem[CMM]{cmm} G. Catino, C. Mantegazza, and L. Mazzieri, 
{\em On the global structure of conformal gradient solitons with nonnegative
Ricci tensor}, Commun. Contemp. Math. {\bf 14} 1250045(2012), no. 6, 12 pages.

\bibitem[CS]{cs} S.-C. Chang and T. Saotome, 
{\em The $Q$-curvature flow in a closed CR 3-manifold},  
Proceedings of the 15th International Workshop on Differential Geometry 
and the 4th KNUGRG-OCAMI Differential Geometry Workshop [Volume 15], 57-69. 
Natl. Inst. Math. Sci. (NIMS), Taej\u{o}n, 2011.

\bibitem[CSZ]{csz} H.-D. Cao, X. Sun and Y. Zhang, 
{\em On the structure of gradient Yamabe solitons}, 
Math. Res. Lett., {\bf 19} (2012), 767--774.

\bibitem[DS]{ds} P. Daskalopoulos and N. Sesum, 
{\em The classification of locally conformally flat Yamabe solitons}, 
Adv. Math. {\bf 240} (2013), 346–-369.

\bibitem[DT]{dt} S. Dragomir and G. Tomassini,
{\em Differential Geometry and Analysis on CR manifolds},
Progress in Mathematics, {\bf 246}, Birkh\"auser, Basel, 2006.

\bibitem[FH]{fh} C. Fefferman and K. Hirachi, 
{\em Ambient metric construction of $Q$-curvature in conformal and CR geometries}, 
Math. Res. Lett., {\bf 10} (2003), 819--831.

\bibitem[G]{g} J. W. Gray, 
{\em Some global properties of contact structures}, 
Ann. Math.,{\bf  69} (1959), 421--450.

\bibitem[GL]{gl} C. R. Graham and J. M. Lee, 
{\em Smooth solutions of degenerate Laplacians on strictly pseudoconvex domains}, 
Duke Math. J., {\bf 57} (1988), 697--720.



\bibitem[GY]{gy} N. Gamara and Y. Yacoub, 
{\em CR Yamabe conjecture-the conformally flat case}, 
Pacific J. Math., {\bf 201} (2001), 121--175.


\bibitem[H1]{h1} R. S. Hamilton, 
{\em Three-Manifolds with Positive Ricci Curvature}, 
J. Diff. Geom., {\bf 17} (1982), 255--306.

\bibitem[H2]{h2} R. S. Hamilton, 
{\em The Ricci flow on surfaces},  
Mathematics and general relativity (Santa Cruz, CA, 1986), 237--262. 
Amer. Math. Soc., Providence, RI, 1988.

\bibitem[H3]{h3} R. S. Hamilton, 
{\em The Harnack estimate for the Ricci flow}, 
J. Diff. Geom., {\bf 37} (1993), 225--243.

\bibitem[H4]{h4} R. S. Hamilton,
{\em A matrix Harnack estimate for the heat equation},  
Comm. Anal. Geom., {\bf 1} (1993), 113--126.

\bibitem[H5]{h5} R. S. Hamilton, 
{\em The formation of singularities in the Ricci flow}, 
Surveys in differential geometry, Vol. II (Cambridge, MA, 1993), 7--136. 
Internat. Press, Cambridge, MA, 1995.

\bibitem[H6]{h6} R. S. Hamilton, 
{\em Lectures on geometric flows}, 
1989, unpublished.

\bibitem[Hi]{hi} K. Hirachi, 
{\em Scalar pseudo-hermitian invariants and the Szeg\"{o} Kernel on $3$-dimensional CR Manifolds}, 
Lecture Notes in Pure and Appl. Math. 143, 67--76. 
Dekker, 1992.

\bibitem[Ho]{ho} P. T. Ho, 
{\em  The long time existence and convergence of the CR Yamabe flow}, 
Commun. Contemp. Math. {\bf 14} (2012), 50 p.

\bibitem[Hsu]{hsu} S.-Y. Hsu, 
{\em A note on compact gradient Yamabe solitons}, 
J. Math. Anal. Appli., {\bf 388} (2012), 725--726.

\bibitem[I]{i} T. Ivy,
{\em Local existence of Ricci solitons},
Manuscripta Math., {\bf 91} (1996), 151--162.

\bibitem[KT]{kt} Y. Kamishima and T. Tsuboi, 
{\em CR-structures on Seifert manifolds}, 
Invent. Math., {\bf 104} (1991), 149--163.

\bibitem[L1]{l1} J. M. Lee, 
{\em Pseudo-Einstein structure on CR Manifolds}, 
Amer. J. Math., {\bf 110} (1988), 157--178.

\bibitem[L2]{l2} J. M. Lee, 
{\em The Fefferman metric and pseudohermitian invariants},
Trans. Amer. Math. Soc., {\bf 296} (1986), 411--429.


\bibitem[P1]{p1} G. Perelman, 
{\em The entropy formula for the Ricci flow and its geometric applications}, 
arXiv:math.DG/0211159.

\bibitem[P2]{p2} G. Perelman, 
{\em Ricci flow with surgery on three manifolds}, 
arXiv:math.DG/0303109.

\bibitem[Ru]{ru} M. Rumin, 
{\em Formes Diff\'erentielles sur les vari\'et\'es de contact}, 
J. Diff. Geom., {\bf 39} (1994), 281--330.

\bibitem[SS]{ss} H. Schwetlick and M. Struwe, 
{\em Convergence of the Yamabe flow for large energies}, 
J. Reine Angew. Math., {\bf 562} (2003), 59--100.

\bibitem[T]{T} S. Tachibana, 
{\em On harmonic tensors in compact {S}asakian spaces},
Tohoku Math. J., {\bf 17} (1965), 271--284.

\bibitem[W]{wang} Q.-M. Wang, 
{\em Isoparametric functions on Riemannian manifolds. I}, 
Math. Ann., {\bf 277} (1987), 639--646.

\end{thebibliography}
\end{document}